\definecolor{brightcerulean}{rgb}{0.11, 0.67, 0.84}
\theoremstyle{plain}
\newtheorem{theorem}{Theorem}[section]
\newtheorem{proposition}[theorem]{Proposition}
\newtheorem{lemma}[theorem]{Lemma}
\DeclareFontFamily{U}{wncy}{}
    \DeclareFontShape{U}{wncy}{m}{n}{<->wncyr10}{}
    \DeclareSymbolFont{mcy}{U}{wncy}{m}{n}
    \DeclareMathSymbol{\Sh}{\mathord}{mcy}{"58}
\theoremstyle{definition}
\newcommand{\appsection}[1]{\let\oldthesection\thesection
\renewcommand{\thesection}{Appendix \oldthesection}
\section{#1}\let\thesection\oldthesection}
\newtheorem{definition}[theorem]{Definition}
\newtheorem{algorithm}[theorem]{Algorithm}
\newtheorem{observation}[theorem]{Observation}
\theoremstyle{remark}
\newtheorem{remark}[theorem]{Remark}
\newtheorem{example}[theorem]{Example}
\DeclareMathOperator{\red}{red}
\DeclareMathOperator{\ch}{char}
\DeclareMathOperator{\Div}{Div}
\def\Z{{\mathbb{Z}}}
\def\F{{\mathbb{F}}}
\def\Q{{\mathbb{Q}}}
\def\P{{\mathbb{P}}}
\def\O{{\mathcal{O}}}
\def\mfrak{{\mathfrak{m}}}
\DeclareMathOperator{\ord}{ord}
\DeclareMathOperator{\divisor}{div}
\DeclareMathOperator{\Ann}{Ann}
\DeclareMathOperator{\Span}{span}
\DeclareMathOperator{\Sing}{Sing}
\DeclareMathOperator{\Spec}{Spec}
\renewcommand*{\backref}[1]{}
\renewcommand*{\backrefalt}[4]{%
  \ifcase #1 %
    \relax
  \or
    $\uparrow$#2.%
  \else
    $\uparrow$#2.%
  \fi%
}
\begin{document}
\title{A refined Chabauty--Coleman bound for surfaces}

\author{Jennifer S. Balakrishnan}
\address{ Department of Mathematics \& Statistics, Boston University, 665 Commonwealth Avenue, Boston, MA 02215, USA}
\email[J. S. Balakrishnan]{jbala@bu.edu}%

\author{Jerson Caro}
\address{Department of Mathematics \& Statistics, Boston University, 665 Commonwealth Avenue, Boston, MA 02215, USA}
\email[J. Caro]{jlcaro@bu.edu}

\date{\today}

\maketitle

\begin{abstract}
Caro and Pasten gave an explicit upper bound on the number of rational points on a hyperbolic surface that is embedded in an abelian variety of rank at most one. We show how to use their method to produce a refined bound on the number of rational points on the surface $W_2\coloneqq C+C$ in the case of a hyperelliptic curve $C$ of genus $3$ over $\Q$. Combining this with work of Siksek, we use this to determine $W_2(\Q)$ in a selection of examples. 
\end{abstract}



\section{Introduction}
Let $C$ be a smooth projective curve of genus $g$ defined over $\Q$. If the Mordell-Weil rank $r$  of the Jacobian of $C$ is less than $g$, Coleman's work on effective Chabauty \cite{coleman} provides an upper bound on the number of rational points on $C$: 
$$
\#C(\Q) \leq \#C(\F_p) + 2g-2,
$$ 
where $p > 2g$ is a prime of good reduction. This bound arises from constructing an {annihilating differential} whose $p$-adic (Coleman) integral cuts out a finite set $C(\Q_p)_1$ of $p$-adic points on the curve containing the set of rational points. Bounding the size of $C(\Q_p)_1$ produces this upper bound.

The first extension of Coleman's explicit bound beyond the context of curves is due to Caro and Pasten \cite{CaroPasten2021}.  
They exhibit a bound for the number of rational points of a hyperbolic surface $ X $ defined over a number field, which is embedded in an abelian variety $ A $ of rank at most one, assuming certain conditions on the reduction type at a well-chosen prime. 

Here, we apply the Caro--Pasten approach to specific surfaces, refining the bound in these cases (cf. Theorem \ref{ThmMain}). We further give examples where the method can yield sharp results, in conjunction with some work of Siksek, while existing methods are not applicable.

Recall that when the curve $C$ admits a degree $d$ morphism to $\P^1$ defined over $\Q$, the $d$-th symmetric power $C^{(d)}$ of $C$ contains a $\P^1$, giving it infinitely many rational points. An \textit{unexpected degree $d$ point} is a rational point of $C^{(d)}$ that lies outside the $\P^1$s in $C^{(d)}$. For hyperelliptic curves of genus $3$, the unexpected quadratic points correspond to rational points of the surface $W_2\coloneqq C+C\subset J$, where $J$ denotes the Jacobian of $C$. In this manuscript, we present an algorithm, which follows the work of Caro and Pasten, to obtain an upper bound for $\#W_2(\Q)$. 

\begin{theorem}\label{ThmMain} Let $C$ be a hyperelliptic curve of genus $3$ given by an odd degree model defined over $\Q$  whose Jacobian has rank at most $1$.  Let  $p\ge 11$ be a prime of good reduction for $C$. Suppose that the reduction of $W_2$ does not contain elliptic curves over $\F_p^{\rm alg}$. Then  $W_2(\Q)$  is finite and
\begin{equation}\label{mainthm}
\# W_2(\Q)\le \# W_2(\F_p) +  2p+12\sqrt{p}+7. 
\end{equation}
\end{theorem}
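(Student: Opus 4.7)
The plan is to apply the Chabauty--Coleman approach for surfaces of Caro--Pasten, specialized to $W_2 \subset J$. The hypotheses $g = 3$ and $r := \rk J(\Q) \leq 1$ give $r + \dim W_2 \leq g$, so Chabauty applies. Let
$$V = \ker\!\left(H^0(J_{\Q_p}, \Omega^1_J) \longrightarrow \Hom(J(\Q), \Q_p)\right),$$
where the map is Coleman integration. Then $\dim V \geq g - r \geq 2$. Choose two $\Q_p$-linearly independent $\omega_1, \omega_2 \in V$, let $\eta_i = \iota^*\omega_i$ be their pullbacks to $W_2$ under the inclusion $\iota\colon W_2 \hookrightarrow J$, and let $f_i$ denote the corresponding Coleman primitives on residue disks. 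Every $Q \in W_2(\Q)$ satisfies $f_1(Q) = f_2(Q) = 0$, and the task is to control these common zeros $p$-adically.

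First, I would decompose $W_2(\Q_p)$ into residue disks indexed by $W_2(\F_p)$, each a two-dimensional polydisk by good reduction. I would distinguish \emph{generic} disks, where $\bar\eta_1, \bar\eta_2$ are linearly independent, from \emph{degenerate} disks, where the reduction of $\eta_1 \wedge \eta_2$ vanishes. In a generic disk, a two-variable Newton polygon argument bounds the common zeros of $f_1, f_2$ by a small constant. In a degenerate disk, the rational points are forced (up to lower-dimensional corrections) onto the auxiliary curve $Z \subset W_2$ cut out by $\eta_1 \wedge \eta_2 = 0$, on which one runs a one-dimensional Chabauty--Coleman argument using $\eta_1|_Z$ (or $\eta_2|_Z$) as the annihilating differential.

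Summing the estimates, the generic disks contribute the term $\#W_2(\F_p)$, while the degenerate disks produce the correction $2p + 12\sqrt{p} + 7$. The dominant $p + O(\sqrt{p})$ piece arises from applying the Hasse--Weil bound to $\#Z(\F_p)$, combined with the per-disk ``$2g_Z-2$''-style slack from curve Chabauty on $Z$. The hypothesis ruling out elliptic curves in the reduction of $W_2$ is essential here: it constrains the components of $Z$ and forces their genera into the range that yields the specific constants $2$ and $12$. The additive $7$ packages contributions from a small set of exceptional disks, such as those meeting the image of the diagonal $\Delta_C \subset C^{(2)}$ or the singular point of $W_2$ coming from contraction of the hyperelliptic $g^1_2$.

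The main obstacle will be the precise geometric analysis of $Z$: identifying its irreducible components in (a desingularization of) $W_2$, bounding their arithmetic genera uniformly, handling intersections between these components and with $\Delta_C$, and tracking how the assumption $p \geq 11$ enters through the Newton polygon counts and through ensuring that curve Chabauty works on each component of $Z$. Verifying that the elliptic-curve-free hypothesis rules out all obstructions to finiteness, and that the per-disk counts combine to give exactly $2p + 12\sqrt{p} + 7$ rather than merely $O(p)$, is the most delicate part of the argument.
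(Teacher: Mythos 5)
Your high-level picture (annihilating differentials $\omega_1,\omega_2$, the wedge $\omega_1\wedge\omega_2$ cutting out a curve, residue-disk decomposition, Hasse--Weil on that curve) matches the skeleton of the Caro--Pasten method that the paper uses. But the central mechanism in your sketch is not the one that works, and the parts you defer are exactly where the theorem lives. First, the per-disk count is not obtained by a ``two-variable Newton polygon argument'' for common zeros of two Coleman primitives on a two-dimensional residue disk; that problem is not controlled by any standard Newton-polygon statement (the common zero locus of two two-variable power series need not even be finite without further input). The rank $\le 1$ hypothesis is used in a much more specific way: one parametrizes the one-dimensional set $\overline{J(\Q)}\cap U_x$ by a single $p$-adic analytic map $\gamma\colon p\Z_p\to U_x$ and studies the one-variable series $h=f\circ\gamma$, where $f$ is a local equation for $W_2$. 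The number of zeros of $h$ is then bounded by locating a coefficient $c_N$ with $|c_N|\ge 1$, and that $N$ is controlled by the order of tangency $m(x)$ of $w_i$-integral formal arcs at $x$, via the overdetermination theorem (Theorem \ref{ThmOver}): $m\le\sum_j\sum_{y}a_j(\ord_y(\nu_j^\bullet(w_0))+1)$ computed on the normalizations of the components of $D=\divisor(w_1\wedge w_2)$. Relatedly, your claim that in a degenerate disk the rational points are ``forced onto'' $Z$ is not correct and is not needed: disks centered on $D(\F_p)$ simply admit a larger bound $\#(W_2(\Q_p)\cap\overline{J(\Q)}\cap U_x)\le\frac{p-1}{p-2}m(x)+1$, with $m(x)$ controlled by local vanishing orders of the $w_i$ along $D$, not by running Chabauty on $Z$ itself.

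Second, the quantitative content is missing. The general Caro--Pasten bound requires $p\ge 521$ (to guarantee $w_1\wedge w_2\ne 0$ via a Chern-class inequality); the whole point of this theorem is that for an odd-degree hyperelliptic model one can compute $\psi^*(w_1\wedge w_2)=(\beta_{01}+\beta_{02}(x_1+x_2)+\beta_{12}x_1x_2)(x_2-x_1)\frac{dx_1\wedge dx_2}{y_1y_2}$ explicitly, split into three cases according to whether $Z=\mathbb{V}(\beta_{01}+\beta_{02}(x_1+x_2)+\beta_{12}x_1x_2)$ factors, is reducible with irreducible image, or is irreducible, and in each case prove the uniform bound $m(x)\le 6$; this is what permits $p\ge 11$. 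The elliptic-curve hypothesis enters only to exclude the one reducible subcase (Lemma \ref{D reducible}) where a component of $D$ has geometric genus $1$ and the right-hand side of \eqref{EqnOver} becomes useless. The final constants come from $2\#C(\F_p)+4$ in Cases I--II versus $\#D(\F_p)+\#\Sing(D)(\F_p)+N$ with $g(\widetilde{Z'})\le 6$, $N\le 8$, $\#\Sing(D)(\F_p)\le 5$ in Case III, and then Hasse--Weil; none of this is derivable from the sketch as written. As it stands, the proposal identifies the right objects but neither supplies the per-disk bounding mechanism nor the case analysis that produces $2p+12\sqrt{p}+7$.
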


In \cite{Siksek}, Siksek presented an explicit and practical procedure for computing algebraic points on curves. Siksek established a criterion to verify if a subset of the rational points of $C^{(d)}$ is the entire set, subject to the condition that the rank of the Jacobian is at most $g-d$. 

In this manuscript, we provide three examples where the Caro--Pasten method combined with ideas from Siksek's method enables us to compute $W_2(\Q)$ in cases where Siksek's method is not directly applicable (cf. Section \ref{Comparison}). These examples are achieved by explicitly computing the differentials used to produce the bound in \eqref{mainthm} and bounding the number of their zeros.

In Section 2, we recall a special case of the Caro--Pasten method for bounding rational points on surfaces and discuss how it gives a bound on the number of unexpected quadratic points on genus 3 curves of Jacobian rank 1.  We also review Siksek's work on computing algebraic points on curves. 
In Section 3, we refine the Caro--Pasten method for genus 3 {hyperelliptic} curves. In particular, we improve the applicability of the method from previously requiring $p \geq 521$ to $p \geq 11$ (or $p \ge 5$ in some cases). We also give improved upper bounds on the number of points in each residue disk of the curve contained in the intersection of $W_2(\Q_p)$ with the $p$-adic closure of $J(\Q)$ in $J(\Q_p)$.
In Section 4, we distill the above into an algorithm, and in Section 5, we present three examples where applying the algorithm and combining it with Siksek's method allows us to compute the set $W_2(\Q)$.

\begin{remark} We restrict to genus 3 hyperelliptic curves $C$ of Jacobian rank 1 for the following reasons: computing the rank of the Mordell--Weil group of the Jacobian of a generic genus $3$ curve is difficult \cite{bruin-poonen-stoll}, while for hyperelliptic curves, an implementation of 2-descent is readily available in Magma \cite{magma}. Moreover, once one rational non-torsion point on the Jacobian is known, a basis of annihilating differentials is easily computed using the algorithm of \cite{Balakrishnan}. Finally, the particularly nice form of the basis of $H^0(C, \Omega^1)$ makes the analysis of a certain divisor relevant to the method more amenable to direct computation.\end{remark}

\section*{Acknowledgements} We are grateful to Andrew Sutherland for isomorphism testing a large collection of genus 3 hyperelliptic curves. We also thank Steffen M\"uller, Riya Parankimamvila Mamachan, and Hector Pasten for useful discussions and for carefully reading a previous version of this manuscript.
The first author was supported by NSF grant DMS-1945452 and Simons Foundation
grants \#550023 and \#1036361, and the second author was supported by  Simons Foundation grant \#550023.


\section{A Chabauty--Coleman bound for unexpected quadratic points of curves}
In this section, we review a special case of the Caro--Pasten method \cite{CaroPasten2021}, which gives an upper bound for the number of rational points on surfaces inside an abelian variety with Mordell-Weil rank $1$. Additionally, we give a sketch of Siksek's work on symmetric Chabauty \cite{Siksek} in the special case of the symmetric square of a curve.

\subsection{A Chabauty-Coleman bound for surfaces}
Let $C$ be a smooth, projective, and geometrically irreducible curve over the rationals of genus $3$. Let $J$ denote the Jacobian of $C$ and assume that the Mordell-Weil rank of $J$ is $1$. 

For each point $P$ in $C$, we have the following diagram, which represents a general construction of a morphism from the symmetric square $C^{(2)}$ of $C$ to $J$:
\begin{equation}\label{construction}
\xymatrixcolsep{5pc}\xymatrix{
C\ar[r]^{j_P}\ar[drr]_{i_P} &C^2\ar[r]^{\pi} &C^{(2)}\ar[d]^{\psi_P}\\
&&J,
}
\end{equation}
where 
\begin{align*}
j_P(Q)&=(Q,P),\\
i_P(Q)&=[(Q)-(P)],    
\end{align*} 
$\pi$ is the canonical quotient morphism, and 
\begin{equation*}
\psi_P([(Q,R)])=[(Q)+(R)-2(P)]    
\end{equation*}
for all $Q,R\in C$. The image of $C^{(2)}$ in $J$ is a closed subvariety $W_2$ of $J$. It can be expressed as 
\begin{equation*}
W_2:=i_P(C)+i_P(C).
\end{equation*}
Caro and Pasten \cite[Corollary 1.15]{CaroPasten2021} provide an upper bound for the number of rational points in $W_2$.

\begin{theorem}[Method of Caro--Pasten]\label{CoroQuadratic3} Let $C$ be a smooth, geometrically irreducible, projective curve over $\Q$ of genus $3$ such that its Jacobian $J$ has Mordell-Weil rank at most $1$. Let  $p\ge 521$ be a prime of good reduction for $C$. Suppose that $W_2$ does not contain elliptic curves over $\F_p^{\rm alg}$. Then  $W_2(\Q)$  is finite and
\begin{equation}
\# W_2(\Q)\le \# W_2(\F_p) +  6\cdot \frac{p-1}{p-2}\cdot \left(p+4p^{1/2} + 5\right).
\end{equation}
\end{theorem}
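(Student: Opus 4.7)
The plan is to execute a Chabauty--Coleman argument on the embedded surface $W_2\subset J$. Since $\dim J=3$ and the Mordell--Weil rank of $J$ is at most $1$, the $p$-adic closure $\overline{J(\Q)}$ of $J(\Q)$ in $J(\Q_p)$ is annihilated under Coleman integration by a subspace $V\subset H^0(J_{\Q_p},\Omega^1)$ of dimension at least $2$. I would pick a basis $\omega_1,\omega_2$ of $V$ and pull back along $W_2\hookrightarrow J$ to obtain $\eta_1,\eta_2\in H^0(W_2,\Omega^1)$ whose simultaneous Coleman zero set $W_2(\Q_p)_1\subset W_2(\Q_p)$ contains $W_2(\Q)$.

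The first task is to show $W_2(\Q_p)_1$ is finite, and this is where the hypothesis on elliptic curves enters. The wedge $\eta_1\wedge\eta_2$ is a holomorphic $2$-form on $W_2$; any positive-dimensional component of $W_2(\Q_p)_1$ would force $\eta_1\wedge\eta_2$ to vanish along a curve in the reduction $\overline{W_2}$ over $\overline{\F_p}$. A standard argument using the vanishing of invariant differentials along such a curve in an abelian variety would then produce an elliptic curve inside $\overline{W_2}$, contradicting the assumption.

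The core of the proof is then a disk-by-disk count of $\#W_2(\Q_p)_1$. At each point of $W_2(\F_p)$ I would pick local parameters $(x,y)$, expand the Coleman antiderivatives $F_1=\int\eta_1$ and $F_2=\int\eta_2$ as convergent power series over $\Z_p$ in two variables, and split the disks of $W_2$ into two classes. In a \emph{generic} residue disk, where the linear part of $(F_1,F_2)$ is invertible modulo $p$, a two-variable Hensel/Newton-polygon argument forces exactly one common zero, contributing at most $\#W_2(\F_p)$ to the total. In a \emph{bad} disk, the center lies on the reduction of the zero locus $Z$ of $\eta_1\wedge\eta_2$; under the no-elliptic-curves assumption $Z$ is a curve in $\overline{W_2}$ of controlled arithmetic genus, so a Hasse--Weil estimate for $Z$ gives the factor $p+4p^{1/2}+5$ bounding the number of bad disks. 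Inside a bad disk I would exploit the sweeping of $W_2$ by the images $i_P(C)+i_P(C)$ to restrict the pair $(F_1,F_2)$ to a curve through the center, reducing to a one-variable Coleman zero bound of uniform size per disk; the combinatorial prefactor $6\cdot\frac{p-1}{p-2}$ collects this per-disk count together with formal-group convergence corrections, and the hypothesis $p\geq 521$ is there to keep the $p$-adic estimates clean.

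The main obstacle is the quantitative control of the bad locus: proving that $Z$ is a curve of sufficiently small arithmetic genus to feed into Hasse--Weil, and producing, at each bad disk, an auxiliary curve through the center on which the restricted Coleman integral is nontrivial and admits a uniform zero bound. Once these geometric inputs are secured, the remainder is a routine two-variable adaptation of Coleman's one-variable curve method.
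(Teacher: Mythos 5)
Your proposal has the right opening moves (a two-dimensional annihilating subspace, restriction to $W_2$, a disk-by-disk count organized around the vanishing locus of $\eta_1\wedge\eta_2$), but the core counting step contains a genuine gap. You treat $W_2(\Q_p)_1$ as the common zero set of two Coleman functions $F_1,F_2$ in \emph{two} local variables on the surface, and in a bad disk you propose to ``restrict the pair $(F_1,F_2)$ to a curve through the center'' to get a one-variable zero bound. Restricting to one auxiliary curve only controls the common zeros lying \emph{on} that curve; it says nothing about common zeros elsewhere in the two-dimensional disk, and there is no Newton-polygon-type bound for the number of common zeros of two power series in two variables. This is exactly the difficulty the Caro--Pasten method is built to avoid: the rank $\le 1$ hypothesis is used not merely to produce $\dim V\ge 2$, but to make the $p$-adic closure $\overline{J(\Q)}$ at most one-dimensional, so that $\overline{J(\Q)}\cap U_x$ is parametrized by a single map $\gamma\colon p\Z_p\to U_x$ and one counts zeros of the \emph{one-variable} series $h=f\circ\gamma$, where $f$ is a local equation of the divisor $W_2\subset J$. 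The two differentials then enter not to cut out the zero set but to bound the vanishing order $m(x)$ of $h$, via the $\omega$-integrality/overdetermined-system bound of Theorem \ref{ThmOver}: $m(x)$ is controlled by orders of $\nu_j^\bullet(w_0)$ along the components $C_j$ of $D=\divisor(w_1\wedge w_2)$, and summing \eqref{EqnOver} over $x$ is what produces the factor $6\cdot(p+4\sqrt{p}+5)$ (Hasse--Weil on the normalizations $\widetilde{C}_j$, of genus $\ge 2$ by the no-elliptic-curves hypothesis, plus the degree of the canonical divisor, times the total multiplicity of $D$), with $\frac{p-1}{p-2}$ coming from converting $m(x)$ into a zero count as in \eqref{prop914}.

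Two further points. First, your proposal takes for granted that $\eta_1\wedge\eta_2$ is nonzero on the special fiber; this is precisely what the hypothesis $p\ge 521$ buys (it encodes the geometric inequality $p>(128/9)c_1^2(W_2)^2$ used to prove Proposition \ref{nonzero wedge}), so describing it as merely ``keeping the $p$-adic estimates clean'' misses its actual role --- without it the divisor $D$ on which your whole bad-disk analysis rests need not exist. Second, the family $i_P(C)+i_P(C)$ you invoke consists of translates of the surface $W_2$ itself, not of curves sweeping out $W_2$, so even the intended auxiliary curves are not correctly identified. To repair the argument you would need to replace the two-variable common-zero count by the one-variable intersection of $\gamma(p\Z_p)$ with $W_2$ and import the $\omega$-integral jet bound of Theorem \ref{ThmOver}.
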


\begin{remark} The condition $p \geq 521$ comes from the geometric bound $p > (128/9)c^2_1(W_2)^2$, where $c^2_1(W_2)$ is the degree of the first Chern class, or equivalently, the self-intersection of the canonical class. This geometric bound is, in particular, used in the work of Caro and Pasten to prove that a certain $2$-form in $H^0(W_2,\Omega^2_{W_2/\F_{p}^{alg}})$ is not the zero section (cf. \cite[Lemma 5.7]{CaroPasten2021}).
\end{remark}

Caro--Pasten's upper bound is, in fact, an upper bound for $\#W_2(\Q_p)\cap\overline{J(\Q)}$, where $\overline{J(\Q)}$ denotes the $p$-adic closure of $J(\Q)$ in $J(\Q_p)$. They establish this bound locally, i.e., they bound $\#W_2(\Q_p)\cap \overline{J(\Q)}\cap U_x$ for every $x\in W_2(\F_p)$, where $U_x$ is the residue disc associated with $x$, using overdetermined systems of differential equations in positive characteristic ($\omega$-integral curves). 

To begin with, we parametrize $\overline{J(\Q)}\cap U_x$ with a $p$-adic analytic map $\gamma \colon  p\Z_p\to U_x\subseteq J(\Q_p)$. Let $f$ be a local equation for $W_2$ in $U_x$. Then $\#W_2(\Q_p)\cap\overline{J(\Q)}\cap U_x$ is bounded by the number of zeros of the $p$-adic analytic function 
\begin{equation}\label{function h}
h\coloneqq f\circ \gamma=\sum_{n=1}^{\infty}c_nz^n
\end{equation}
on $p\Z_p$. By \cite[Lemma 6.2]{CaroPasten2021}, it is enough to find some small $N$ such that $|c_N|$ is not too small, say $|c_N|\ge 1$. To find this small $N$ (depending on $x$), we introduce the following definitions:

\begin{definition}[$\omega$-integrality]\label{w-integral}
Let $k$ be a field, let $\phi\colon X\to Y$ be a morphism of $k$-schemes, and let $\omega\in H^0(Y,\Omega^1_{Y/k})$. We say that $\phi$ is $\omega$-integral if the composition
\begin{equation}\label{phi bullet}
\phi^\bullet \colon H^0(Y,\Omega^1_{Y/k})\to H^0(Y,\phi_*\phi^*\Omega^1_{Y/k})=H^0(X,\phi^*\Omega^1_{Y/k})\to H^0(X,\Omega^1_{X/k})
\end{equation}
satisfies $\phi^\bullet(\omega)=0$.  
\end{definition}

\begin{definition}
Let $\Ann(p, J(\Q))$ be the $2$-dimensional $\Q_p$-vector space defined as
\begin{equation*}
\Ann(p,J(\Q)) \coloneqq \{\omega \in H(J_{\Q_p},\Omega^1)\colon  \int_{0}^{R}\omega =0
\text{ for all }R\in J(\Q)\},
\end{equation*}
where $\int_{0}^{R}\omega$ denotes the associated Coleman integral (see \cite{Wetherell97} for a definition).
\end{definition}

Let $k \coloneqq \F_p^{alg}$, let  
\begin{equation*}
V_m^k \coloneqq \Spec(k[z]/(z^{m+1})),   
\end{equation*}
let $\xi$ denote the generic point of $V_m^k$, and denote by $(W_2)_{k}$ the surface $W_2\otimes k$.

\begin{proposition}\label{nonzero wedge} Let $p \geq 521$ be a prime of good reduction for $C$. There exists a basis  \{$\omega_1,\omega_2$\} of $\Ann(p,J(\Q))$ such that $$w_1\wedge w_2\in H^0((W_2)_{k}, \Omega^2_{(W_2)_{k}/k})$$ is not the zero section, where $w_1,w_2\in H^0((W_2)_{k},\Omega^1_{(W_2)_{k}/k})$ are obtained by reducing $\omega_1,\omega_2$ mod $p$ and restricting to $(W_2)_{k}$.
\end{proposition}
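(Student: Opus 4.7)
The plan is as follows. First, I would verify that $\Ann(p, J(\Q))$ contains a $2$-dimensional subspace and that one can pick an integral basis. By definition, $\Ann(p, J(\Q))$ is the annihilator, under the Coleman integration pairing, of the image of $J(\Q)$ inside $H^0(J_{\Q_p}, \Omega^1)^{\vee} \cong \Q_p^3$; since $J(\Q)$ has Mordell--Weil rank at most one, this forces $\dim_{\Q_p}\Ann(p, J(\Q))\ge 2$. Good reduction of $C$ gives an abelian scheme $\mathcal{J}$ over $\Z_p$ extending $J_{\Q_p}$, which endows $H^0(J_{\Q_p}, \Omega^1)$ with the integral lattice $H^0(\mathcal{J}, \Omega^1)$; after rescaling I can pick $\omega_1,\omega_2\in\Ann(p,J(\Q))$ whose reductions $\bar\omega_1,\bar\omega_2\in H^0(\mathcal{J}_k, \Omega^1)$ are $k$-linearly independent.

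Next, I would show that $\eta := \bar\omega_1\wedge\bar\omega_2$, viewed as a translation-invariant $2$-form of constant rank two on the abelian threefold $\mathcal{J}_k$, restricts nontrivially to $(W_2)_k$. Arguing by contradiction, suppose $\eta|_{(W_2)_k}=0$. Then at every smooth point $x\in (W_2)_k$, the tangent plane $T_x(W_2)_k\subseteq T_x\mathcal{J}_k$ contains the $1$-dimensional kernel of $\eta_x$, and since this kernel line is translation-invariant, it defines a $1$-dimensional distribution on $\mathcal{J}_k$ to which $(W_2)_k$ is everywhere tangent. The geometric conclusion one aims for is that this distribution integrates to a connected $1$-dimensional algebraic subgroup $G\subseteq \mathcal{J}_k$, necessarily an elliptic curve since abelian varieties contain no unipotent subgroups; then $(W_2)_k$ is a union of $G$-cosets and hence contains a translate of $G$, contradicting the hypothesis that $(W_2)_k$ contains no elliptic curves.

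The principal obstacle is the integrability step: passing from a translation-invariant line in the tangent bundle to an honest algebraic subgroup, which is subtle in positive characteristic because of formal-group phenomena. This is precisely where the geometric bound $p \geq 521 > (128/9)c_1^2(W_2)^2$ enters in \cite[Lemma 5.7]{CaroPasten2021}: the argument there constructs a section on $(W_2)_k$ whose divisor of zeros is controlled by $c_1^2(W_2)$, and the inequality on $p$ produces a contradiction unless an elliptic curve is genuinely present. I would follow this comparison argument and note that the explicit value of $c_1^2(W_2)$ in the hyperelliptic genus $3$ case is what will later be exploited in Section~3 to relax the hypothesis to $p\ge 11$.
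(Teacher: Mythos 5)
The paper's own proof of this proposition is a single citation to \cite[Lemma 9.8]{CaroPasten2021}, so your proposal and the paper ultimately rest on the same external input: after your setup, you too defer the crux --- the non-vanishing of the wedge in characteristic $p$ --- to \cite[Lemma 5.7]{CaroPasten2021}. Your first paragraph is correct and standard: $\dim_{\Q_p}\Ann(p,J(\Q))\ge 2$ because $\rk J(\Q)\le 1<g=3$, and one gets $k$-linearly independent reductions by taking a $\Z_p$-basis of the saturated lattice $\Ann(p,J(\Q))\cap H^0(\mathcal{J},\Omega^1)$ (note that rescaling each form separately is not quite enough to guarantee \emph{joint} independence mod $p$; you need a basis of the saturated rank-$2$ lattice, whose quotient in $H^0(\mathcal{J},\Omega^1)$ is torsion-free). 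You also correctly locate the difficulty: the translation-invariant line field to which $(W_2)_k$ would be everywhere tangent does not integrate to an algebraic subgroup in characteristic $p$, and the hypothesis $p>(128/9)c_1^2(W_2)^2$ is exactly what is used in \cite{CaroPasten2021} to exclude this degeneration.

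Two caveats. First, your intended contradiction is with ``$(W_2)_k$ contains no elliptic curves,'' but that is not a hypothesis of this proposition as stated; in this paper that assumption is invoked for a different purpose (see Remark \ref{condition reduction}: it guarantees that the components of $\divisor(w_1\wedge w_2)$ have geometric genus at least $2$, so that the right-hand side of \eqref{EqnOver} is finite). As written, your argument would establish only the disjunction ``$w_1\wedge w_2\ne 0$ on $(W_2)_k$, or $(W_2)_k$ contains an elliptic curve,'' which suffices for the application in Theorem \ref{CoroQuadratic3} but is formally weaker than the proposition. Second, the step from the everywhere-tangent invariant distribution to a contradiction is precisely the content of the cited lemma, and your sketch describes it only impressionistically (``constructs a section whose divisor of zeros is controlled by $c_1^2$''); like the paper's one-line proof, your argument is complete only modulo that reference, so the heuristic middle paragraph should be presented as motivation rather than as part of the proof.
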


\begin{proof}Under the assumptions on $p$, this basis exists by \cite[Lemma 9.8]{CaroPasten2021}.\end{proof}

Suppose there is $m < p$, such that $h$ in \eqref{function h} satisfies $|c_i| < 1$ for every $i \leq m$.  Then there exists a closed immersion $\phi_m$ defined over $k$:
\begin{equation*}
\phi_m\colon V_m^k\to (W_2)_{k},
\end{equation*}
at $x\in W_2(\F_p)$ which is $w_i$-integral for $i=1,2$ (see \cite[Section 8]{CaroPasten2021}).  Let $m(x)$ denote the upper bound for any $m$ such that there is a closed immersion $\phi_m$ as above. 
Let $D=\divisor(w_1\wedge w_2)$ and let us write $D=a_1C_1+\cdots +a_\ell C_\ell$ where $a_j$ are positive integers and $C_j$ are irreducible curves for each $j$. For each $j$, let $\nu_j\colon \widetilde{C}_j\to (W_2)_{k}$ be the normalization map of $C_j$ composed with the inclusion $C_j\to (W_2)_{k}$. To establish this upper bound $m(x)$, we use the following theorem (cf. \cite[Theorem 4.4]{CaroPasten2021}):
\begin{theorem} \label{ThmOver} Let $m\ge 0$ be an integer such that there is a closed immersion $\phi\colon  V_m^k\to (W_2)_{k}$ supported at $x$ (i.e.~with $\phi(\xi)=x$) which is $w_i$-integral for both $i=1$ and $i=2$. Then for every $w_0\in H^0((W_2)_{k},\Omega^1_{(W_2)_{k}/k})$ of the form $w_0=c_1w_1+c_2w_2$ with $c_1,c_2\in k$, we have
\begin{equation}\label{EqnOver}
m\le \sum_{j=1}^\ell \sum_{y\in \nu_j^{-1}(x)} a_j\cdot \left(\ord_y (\nu_j^\bullet (w_0))+1 \right),
\end{equation}
where $\nu_j^\bullet$ is as in \eqref{phi bullet} from Definition \ref{w-integral}. 
\end{theorem}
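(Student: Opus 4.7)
The plan is to analyze $\phi$ through the geometry of $D=\divisor(w_1\wedge w_2)$. Since $w_1\wedge w_2$ is a nonzero global $2$-form by Proposition \ref{nonzero wedge}, the forms $w_1,w_2$ are linearly independent at every point $x\notin\mathrm{Supp}(D)$, giving a local frame for $\Omega^1_{(W_2)_{k}/k}$ there. Because $\phi^\bullet$ is $k$-linear, $w_i$-integrality for $i=1,2$ is automatically equivalent to $w_0$-integrality for every $w_0$ in the $k$-span, so it suffices to produce the bound for a fixed $w_0$. If $x\notin\mathrm{Supp}(D)$, then $\{w_1,w_2\}$ forms an $\OO_{(W_2)_{k},x}$-basis of the stalk of $\Omega^1_{(W_2)_{k}/k}$ at $x$, and the vanishing of $\phi^\bullet(w_1)$ and $\phi^\bullet(w_2)$ on $V_m^k$ forces $\phi^*\Omega^1_{(W_2)_{k}/k}=0$, so $\phi$ factors through the reduced point at $x$ and $m=0$; meanwhile, the right-hand side of \eqref{EqnOver} is $0$ because $\nu_j^{-1}(x)=\emptyset$ for every $j$, and the inequality holds trivially.

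When $x\in\mathrm{Supp}(D)$, the central step is to show that $\phi$ lifts through the normalization of some component of $D$: there exist $j\in\{1,\dots,\ell\}$ and $y\in\nu_j^{-1}(x)$ together with a closed immersion $\widetilde\phi\colon V_m^k\to\widetilde C_j$ supported at $y$ satisfying $\phi=\nu_j\circ\widetilde\phi$. The heuristic is that simultaneous $w_1$- and $w_2$-integrality forces $\phi$ to travel only along the degeneracy locus of $w_1\wedge w_2$, and the universal property of normalization yields a unique lift to one branch. Once on the smooth curve $\widetilde C_j$, choose a uniformizer $t$ at $y$ and write $\nu_j^\bullet(w_0)=t^{e}\,u(t)\,dt$, where $e=\ord_y(\nu_j^\bullet(w_0))$ and $u$ is a unit. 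The identity $\widetilde\phi^\bullet(\nu_j^\bullet(w_0))=\phi^\bullet(w_0)=0$ in $H^0(V_m^k,\Omega^1_{V_m^k/k})$, together with a short local computation in the uniformizer, yields $m\le a_j\cdot(e+1)$, where the factor $a_j$ encodes the multiplicity with which the branch $C_j$ appears in $D$ (and hence how many orders of vanishing of the pulled-back forms are absorbed before the arc ``escapes'' the branch). Since this single summand is at most the full double sum in \eqref{EqnOver}, the bound follows.

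The main obstacle is producing the factorization at points where $D$ is singular---either several components of $D$ meet at $x$, or some component is itself singular at $x$. There, the formal arc $\phi$ can a priori probe several branches of $\mathrm{Supp}(D)$ simultaneously, and rigorously establishing that $\phi$ lifts through a specific normalization $\nu_j$ with the correct multiplicity factor $a_j$ requires a careful local analysis of the integrability equations against the local geometry of $D$ at $x$. This step is the technical heart of the proof; once it is in place, the reduction to $\widetilde C_j$ and the ensuing one-dimensional computation on the fat point $V_m^k$ are formal.
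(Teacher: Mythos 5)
This theorem is not proved in the paper: it is imported verbatim from Caro--Pasten (\cite[Theorem 4.4]{CaroPasten2021}), so I am comparing your sketch against that argument. Your treatment of the case $x\notin\operatorname{Supp}(D)$ is correct and matches the standard argument: $w_1,w_2$ generate $\Omega^1_{(W_2)_k/k}$ at such an $x$, the conormal sequence of the closed immersion makes $\phi^*\Omega^1_{(W_2)_k}\to\Omega^1_{V_m^k}$ surjective, hence $\Omega^1_{V_m^k/k}=0$ and $m=0$. The problem is your ``central step.'' The claim that simultaneous $w_1$- and $w_2$-integrality forces $\phi$ to factor through the normalization of a \emph{single} component $C_j$ at a \emph{single} branch $y$ is not just unproven --- it is false in general, and the shape of the right-hand side of \eqref{EqnOver} (a sum over \emph{all} components and \emph{all} branches over $x$) is precisely the symptom. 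If, say, $D=C_1+C_2$ with two smooth branches through $x$ and $F=F_1F_2$ is a local equation, the integrality hypotheses only force $\ord_z(\phi^\#F)\ge m$, which is perfectly consistent with the arc meeting each branch with intermediate multiplicity $m_i<m$, $m_1+m_2\ge m$; such an arc is scheme-theoretically contained in neither component, so no lift exists. Moreover the universal property of normalization does not apply to $V_m^k$ (it is non-reduced and supported at the singular point), so even set-theoretic containment in a singular $C_j$ would not produce the lift you want. Finally, even granting the factorization, your one-dimensional computation would give $m\le\ord_y(\nu_j^\bullet(w_0))$; the factor $a_j$ and the ``$+1$'' have no mechanism in your argument (an arc that genuinely factors through the branch never ``escapes'' it), so the appeal to multiplicity absorption is not a proof step.

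The actual argument runs differently and in three stages. First, writing $\omega_i=f_i\,dt+g_i\,ds$ in local coordinates with $t\circ\phi$ a uniformizer of $V_m^k$ (possible because $\phi$ is a closed immersion), the two integrality conditions $f_i+g_is'\equiv 0$ give, by the determinant identity $F=f_1g_2-f_2g_1=(f_1+g_1s')g_2-(f_2+g_2s')g_1$, that the local equation $F$ of $D$ satisfies $\ord_z(\phi^\#F)\ge m$. Second, factoring $F$ into branch equations, $\ord_z(\phi^\#F)=\sum_j a_j\sum_{y\in\nu_j^{-1}(x)} i_y$ where $i_y$ is the local intersection multiplicity of the arc with the branch $y$; this is where the weights $a_j$ and the double sum enter. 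Third, for each branch one shows $i_y\le \ord_y(\nu_j^\bullet(w_0))+1$: if the arc agrees with the branch to order $i_y$, then $\nu_j^\bullet(w_0)$ agrees with $\phi^\bullet(w_0)=0$ to order $i_y-1$ (the loss of one order in passing to differentials is the source of the ``$+1$''). Your sketch replaces this global-to-local bookkeeping with a single-branch factorization that does not hold, so the gap is not a missing technical verification but a wrong reduction.
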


Finally, according to \cite[Proposition 9.14]{CaroPasten2021}, we have 
\begin{equation}\label{prop914}
\#W_2(\Q_p)\cap\overline{J(\Q)}\cap U_x\leq \frac{p-1}{p-2}m(x)+1.    
\end{equation}
Consequently
\begin{align*}
\#W_2(\Q_p)\cap\overline{J(\Q)}&\leq\sum_{x\in W_2(\F_p)}\#W_2(\Q_p)\cap\overline{J(\Q)}\cap U_x\nonumber\\
&\leq \#W_2(\F_p)+\frac{p-1}{p-2}\sum_{x\in W_2(\F_p)}m(x).
\end{align*}

\begin{observation}\label{sharpness m}
We have that \cite[Proposition 9.14]{CaroPasten2021} implies that
\begin{equation*}
\#W_2(\Q_p)\cap\overline{J(\Q)}\cap U_x\leq \left\lfloor\frac{p-1}{p-2}m(x)\right\rfloor+1=m(x)+1+\left\lfloor\frac{m(x)}{p-2}\right\rfloor.
\end{equation*}
In particular, $\#W_2(\Q_p)\cap\overline{J(\Q)}\cap U_x\leq m(x)+1$ whenever $m(x)\leq p-3$. Furthermore, $\#W_2(\Q_p)\cap\overline{J(\Q)}\cap U_x\leq m(x)+2$ whenever $p-2\leq m(x)< p$.
\end{observation}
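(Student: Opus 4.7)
The plan is to derive this observation as a direct strengthening of \eqref{prop914} using integrality, together with a short arithmetical rewriting.

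First I would invoke \eqref{prop914}, which provides the inequality
\[
\#W_2(\Q_p)\cap\overline{J(\Q)}\cap U_x\leq \frac{p-1}{p-2}m(x)+1.
\]
Since the left-hand side is a non-negative integer, subtracting $1$ and taking the floor of both sides gives
\[
\#W_2(\Q_p)\cap\overline{J(\Q)}\cap U_x\leq \left\lfloor\frac{p-1}{p-2}m(x)\right\rfloor+1,
\]
which is the first bound. To obtain the stated equality, I would write $\frac{p-1}{p-2}=1+\frac{1}{p-2}$, so that $\frac{p-1}{p-2}m(x)=m(x)+\frac{m(x)}{p-2}$, and then use that $m(x)$ is an integer to pull it outside the floor, giving $\left\lfloor\frac{p-1}{p-2}m(x)\right\rfloor=m(x)+\left\lfloor\frac{m(x)}{p-2}\right\rfloor$.

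For the two special cases, I would analyze $\left\lfloor \frac{m(x)}{p-2}\right\rfloor$ by direct comparison. If $0\le m(x)\le p-3$, then $0\le \frac{m(x)}{p-2}<1$, so the floor is $0$ and the bound becomes $m(x)+1$. If instead $p-2\le m(x)\le p-1$ (which is the range $p-2\le m(x)<p$, since $m(x)$ is an integer), then $1\le \frac{m(x)}{p-2}\le \frac{p-1}{p-2}=1+\frac{1}{p-2}<2$ (using that $p\geq 11$, so in particular $p>3$), giving $\left\lfloor\frac{m(x)}{p-2}\right\rfloor=1$ and a bound of $m(x)+2$.

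There is no genuine obstacle here: the observation is a bookkeeping refinement of the cited Proposition 9.14 that exploits the integrality of the left-hand side. The only point requiring minor attention is verifying the inequality $\frac{p-1}{p-2}<2$ in the second regime, which holds automatically under the standing assumption $p\ge 11$ (and in fact for all $p\ge 4$).
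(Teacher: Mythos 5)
Your proposal is correct and matches the paper's intent exactly: the paper states this as an Observation without writing out a proof, and the argument is precisely the bookkeeping you give — apply \eqref{prop914}, use integrality of the left-hand side to pass to the floor, split $\frac{p-1}{p-2}=1+\frac{1}{p-2}$ to pull the integer $m(x)$ out of the floor, and read off the two regimes. No gaps; the verification that $\frac{p-1}{p-2}<2$ is the only point needing care and you handle it.
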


\begin{remark}\label{condition reduction}
The condition that $(W_2)_{k}$ does not contain elliptic curves over $\F_p^{alg}$ ensures that the geometric genus of each $C_j$ in the definition of $D$ used in \eqref{EqnOver} is greater than $1$. Otherwise, the right-hand side of \eqref{EqnOver} will always be infinite.
\end{remark} 

\subsection{Siksek's Criterion}\label{Comparison} As the $d=2$ case of his work in \cite{Siksek}, Siksek provided an explicit and practical criterion to show that a given subset $\mathcal{L}\subset W_2(\Q)$ is, in fact, equal to $W_2(\Q)$. His method can be summarized in the following three steps:
\begin{itemize}
    \item [(a)] Fix a prime $p$ such that the points in $\mathcal{L}$ lie in different residue disks. 
    \item [(b)] Using \cite[Theorem 3.2]{Siksek} and \cite[Theorem 4.3]{Siksek}, demonstrate that no point $x\in W_2(\Q)\setminus\mathcal{L}$ shares the same residue disk as any point of $\mathcal{L}$.
    \item [(c)] By analyzing the reduction of $J(\Q)$ modulo several primes, show that the reductions  of $W_2(\Q)$ and $\mathcal{L}$ modulo $p$ coincide, thereby implying $W_2(\Q)=\mathcal{L}$.
\end{itemize} 

We now state specific instances of \cite[Theorem 3.2]{Siksek} and \cite[Theorem 4.3]{Siksek} that we will use here. The latter theorem will be used to handle the residue disk associated with $0_J$ and applied in the examples.

\begin{theorem}[{\cite[Theorem 3.2]{Siksek}}]\label{Siksek method}
Let $C$ be a curve, let $p\ge 5$ be a prime of good reduction for $C$, and let $\omega_1,\omega_2$ be a basis for $\Ann(p,J(\Q))$. Let $\mathcal{D}=[(P_1,P_2)]\in C^{(2)}(\Q)$ and let $t_j$ be a uniformizer at $P_j$ that is also a uniformizer modulo $p$. We define the matrix $A(P_1,P_2)$ as  
\[
\begin{pmatrix}
\left.\frac{\omega_1}{dt_1}\right|_{t_1=0} & \left.\frac{\omega_2}{dt_1}\right|_{t_1=0}\\
\left.\frac{\omega_1}{dt_2}\right|_{t_2=0}& \left.\frac{\omega_2}{dt_2}\right|_{t_2=0}
\end{pmatrix}\qquad\text{or}\qquad 
\begin{pmatrix}
\left.\frac{\omega_1}{dt_1}\right|_{t_1=0} & \left.\left(\frac{\omega_1}{dt_1}\right)'\right|_{t_1=0}\\
\left.\frac{\omega_2}{dt_1}\right|_{t_1=0} & \left.\left(\frac{\omega_2}{dt_1}\right)'\right|_{t_1=0}
\end{pmatrix},
\]
depending on whether $P_1\neq P_2$ or $P_1= P_2$, respectively. Note that $\omega_i/dt_1$  is a power series in $t_1$, which Siksek gives explicitly.
If  
$\det(A(P_1,P_2))$
\textbf{}is nonzero modulo $p$, no unexpected quadratic points in the residue disk are associated with $\mathcal{D}$.
\end{theorem}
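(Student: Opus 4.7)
The plan is a direct two-variable Chabauty-style argument at $\mathcal{D}$. Suppose $\mathcal{D}' = [(Q_1, Q_2)] \in C^{(2)}(\Q)$ lies in the same residue disk as $\mathcal{D}$, with labelling chosen so that $Q_j$ reduces modulo $p$ to the same point as $P_j$; the goal is to conclude $\mathcal{D}' = \mathcal{D}$ whenever $\det A(P_1, P_2) \not\equiv 0 \pmod p$. Since both $\mathcal{D}$ and $\mathcal{D}'$ are $\Q$-rational, the class $[\mathcal{D}'] - [\mathcal{D}]$ lies in $J(\Q)$, and the defining property of $\Ann(p, J(\Q))$ together with additivity of Coleman integrals gives that the quantities
\[
\eta_i \;:=\; \int_{P_1}^{Q_1} \omega_i \;+\; \int_{P_2}^{Q_2} \omega_i
\]
vanish for $i = 1, 2$. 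The task reduces to showing that the system $\eta_1 = \eta_2 = 0$ admits only the trivial solution $Q_1 = P_1$, $Q_2 = P_2$ in the residue disk of $\mathcal{D}$.

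Expanding $\omega_i = f_{i,j}(t_j)\, dt_j$ near $P_j$ with $f_{i,j}(t_j) = \sum_{k \ge 0} a_{i,j,k}\, t_j^k$ and integrating termwise yields
\[
\int_{P_j}^{Q_j} \omega_i \;=\; a_{i,j,0}\, u_j \;+\; \tfrac{1}{2} a_{i,j,1}\, u_j^2 \;+\; O(u_j^3), \qquad u_j := t_j(Q_j) \in p\Z_p.
\]
In the case $P_1 \neq P_2$, the residue disks of $P_1$ and $P_2$ are distinct, so $(u_1, u_2)$ are local coordinates on the residue disk of $\mathcal{D}$ in $C^{(2)}$; the linear part of the system $\eta_1 = \eta_2 = 0$ is then $A(P_1,P_2)^{\top}\,(u_1,u_2)^{\top} = 0$, whose coefficient determinant is $\det A(P_1,P_2)$. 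In the diagonal case $P_1 = P_2$, the ordered pair $(u_1, u_2)$ is not well-defined on $C^{(2)}$, so I would pass to the symmetric uniformizers $s_1 = u_1 + u_2$ and $s_2 = u_1 u_2$, which are honest local coordinates at the image of the diagonal. Using $u_1^2 + u_2^2 = s_1^2 - 2 s_2$, the equations $\eta_i = 0$ become
\[
a_{i,1,0}\, s_1 \;-\; a_{i,1,1}\, s_2 \;+\; (\text{terms of order } \ge 2 \text{ in } (s_1, s_2)) \;=\; 0, \qquad i=1,2,
\]
so the matrix of linear coefficients is the second matrix of the statement with its right-hand column negated, and its determinant equals $-\det A(P_1,P_2)$.

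In either case, the hypothesis $\det A(P_1, P_2) \not\equiv 0 \pmod p$ makes the Jacobian of the two-variable power-series system a unit modulo $p$. Inverting the linear part transforms the system into $\tilde s_j + O((\tilde s_1, \tilde s_2)^2) = 0$ for $j = 1, 2$; for any $(\tilde s_1, \tilde s_2) \in p\Z_p \times p\Z_p$, the ultrametric inequality gives $|\tilde s_j + O(\cdots)|_p = |\tilde s_j|_p$ since the higher-order terms have norm at most $\max(|\tilde s_1|,|\tilde s_2|)^2 < \max(|\tilde s_1|,|\tilde s_2|)$, forcing $\tilde s_1 = \tilde s_2 = 0$ and hence $\mathcal{D}' = \mathcal{D}$. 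The main obstacle is the diagonal case $P_1 = P_2$: one must verify that the passage to symmetric coordinates $(s_1, s_2)$ genuinely produces $\Z_p$-integral power series whose higher-order terms have $p$-adic valuations large enough not to compete with the linear part, which is where the hypothesis $p \ge 5$ enters, through control of the factorials arising from termwise Coleman integration and of the denominators introduced by symmetrization.
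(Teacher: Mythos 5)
The paper does not prove this statement at all---it is imported verbatim as \cite[Theorem 3.2]{Siksek}---so there is no internal proof to compare against; your reconstruction is, in substance, Siksek's own argument: use that $[\mathcal{D}']-[\mathcal{D}]\in J(\Q)$ to get the vanishing of $\sum_j\int_{P_j}^{Q_j}\omega_i$, expand the Coleman integrals as power series in local coordinates (symmetrized coordinates $s_1=u_1+u_2$, $s_2=u_1u_2$ on the diagonal), and show that a nonvanishing linear part forces the trivial solution by an ultrametric/Newton-polygon estimate. The identification of the linear parts with $A(P_1,P_2)^{\top}$ and with the derivative matrix (up to negating a column, hence up to sign of the determinant) is correct. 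Two caveats. First, your sentence ``the residue disks of $P_1$ and $P_2$ are distinct'' does not follow from $P_1\neq P_2$; if $P_1\neq P_2$ but $\overline{P}_1=\overline{P}_2$, the ordered pair $(u_1,u_2)$ is again not a coordinate on the residue disk of $C^{(2)}$ and one must symmetrize around the two distinct centers (Siksek's general statement handles this with yet another matrix). As the paper states the theorem, this case is silently excluded, so your reading is consistent with the text, but it is an assumption, not a consequence. Second, the final step---that after inverting the unimodular linear part the higher-order terms are strictly dominated, including the $1/(k+1)$ denominators from termwise integration and the passage to $(s_1,s_2)$---is only sketched; the needed estimate is that a term $\tfrac{1}{k+1}(u_1^{k+1}+u_2^{k+1})$ has norm at most $p^{v_p(k+1)}M^{k+1}<M^2\le\max(|s_1|,|s_2|)$ for $M=\max(|u_1|,|u_2|)\le 1/p$ and $p\ge 5$, applied to the coordinate realizing the maximum. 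You correctly identify this as the place where $p\ge 5$ enters, so I regard it as an omitted routine verification rather than a gap.
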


\begin{theorem}[{\cite[Theorem 4.3]{Siksek}}]\label{residue 0J}
Let $C$ be a hyperelliptic curve, let $p\ge 5$ be a prime of good reduction for $C$, let $\omega_1,\omega_2$ be a basis for $\Ann(p,J(\Q))$, let $\mathcal{D}=[(x,y),(x,-y)]\in C^{(2)}(\Q)$, and let $t$ be a uniformizer at $(x,y)$ that is also a uniformizer modulo $p$.

If  
$\left.\frac{\omega_i}{dt}\right|_{t=0}$
is nonzero modulo $p$ for $i=1$ or $i=2$, no unexpected quadratic points in the residue disk are associated with $\mathcal{D}$.
\end{theorem}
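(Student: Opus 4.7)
The plan is to exploit the hyperelliptic involution $\iota$ to collapse the two-variable Chabauty condition on the residue disc $U_\mathcal{D}$ in $C^{(2)}(\Q_p)$ into a single-variable zero-counting problem. Assuming $y\not\equiv 0\pmod p$, the function $t = X - x$ serves as a common uniformizer at both $(x,y)$ and $(x,-y)$, and a point $\{P_1, P_2\} \in U_\mathcal{D}$ (possibly defined over a quadratic extension of $\Q_p$) is parametrized by $(t_1, t_2) := (t(P_1), t(P_2))$ with $|t_j| < 1$. Such a point represents an unexpected quadratic point exactly when $t_1 \neq t_2$, since $t_1 = t_2$ combined with the reduction conditions $y(P_1) \equiv y$ and $y(P_2) \equiv -y \pmod p$ (with $y\not\equiv 0$) forces $P_2 = \iota(P_1)$, yielding a hyperelliptic pair.

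Because every holomorphic differential on the hyperelliptic curve $C$ has the form $x^j\,dx/y$, it lies in the $(-1)$-eigenspace of $\iota$, i.e., $\iota^*\omega_i = -\omega_i$. Setting $F_i(\tau) := \int_{(x,y)}^{P(\tau)} \omega_i$ one obtains
\[
\int_{\mathcal{D}}^{[P_1+P_2]}\omega_i \;=\; \int_{(x,y)}^{P_1} \omega_i \,+\, \int_{(x,-y)}^{P_2} \omega_i \;=\; F_i(t_1) - F_i(t_2),
\]
so the Chabauty annihilation condition $\omega_i \in \Ann(p, J(\Q))$ applied to the $\Q$-point $[P_1+P_2] \in J(\Q)$ translates into $F_i(t_1) = F_i(t_2)$ for $i = 1, 2$. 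Factor $F_i(t_1) - F_i(t_2) = (t_1 - t_2)\cdot H_i(t_1, t_2)$; then $H_i(0,0) = F_i'(0) = (\omega_i/dt)\big|_{t=0}$, which is a $p$-adic unit for at least one index $i$ by hypothesis.

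The remaining step, which is the main technical obstacle, is to show that this $H_i$ is a $p$-adic unit throughout the bidisc $\{|t_1|, |t_2| < 1\}$ even over a ramified quadratic extension of $\Q_p$; granting this, $F_i(t_1) = F_i(t_2)$ forces $t_1 = t_2$ and hence $\{P_1,P_2\}$ is a hyperelliptic pair, so no unexpected quadratic point lies in $U_\mathcal{D}$. This reduces to the standard local analysis of Coleman integrals: writing $\omega_i/dt = \sum_{k\geq 0} c_k t^k$ with $p$-integral $c_k$, the $n$-th Taylor coefficient of $F_i$ is $c_{n-1}/n$, whose denominator satisfies $v_p(n) \leq \log_p n$. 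For $p \geq 5$, $n \geq 2$, and $|t|_K < 1$ in any quadratic extension $K/\Q_p$ (so $|t|_K^n \leq p^{-n/2}$), the inequality $v_p(n) < n/2$ yields $|c_{n-1}/n|\cdot |t|_K^n < |c_0|$ term by term, so $H_i$ stays $p$-adically close to its nonzero constant term throughout the residue bidisc, completing the argument.
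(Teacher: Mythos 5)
The paper does not actually prove this statement --- it is imported verbatim from Siksek with a citation --- so the comparison target is Siksek's own argument, and your proposal is essentially a faithful reconstruction of it in the non-Weierstrass case: the anti-invariance $\iota^*\omega_i=-\omega_i$ of holomorphic differentials on a hyperelliptic curve converts the annihilation condition on $[P_1+P_2-2\infty]$ into $F_i(t_1)-F_i(t_2)=0$ (using that $\mathcal D$ itself maps to $0_J$, so no extra constant appears), and the unit hypothesis on $(\omega_i/dt)|_{t=0}$ makes the cofactor $H_i$ a unit on the whole bidisc, forcing $t_1=t_2$ and hence $P_2=\iota(P_1)$. The valuation estimate is correct in substance; note only that the $n$-th term of $H_i$ is controlled by $|1/n|\cdot\max(|t_1|,|t_2|)^{n-1}$ rather than $|t|^n$, and the needed inequality $v_p(n)<(n-1)/2$ for $n\ge 2$ still holds for $p\ge 5$, including over a ramified quadratic extension.

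The genuine gap is your standing assumption $y\not\equiv 0\pmod p$. The theorem is stated for an arbitrary uniformizer $t$ at $(x,y)$ that remains a uniformizer modulo $p$, precisely so that it also applies when $(x,y)$ reduces to a Weierstrass point (including $\infty$); there $X-x$ is not a uniformizer, the two points of the disc reduce to the \emph{same} point of $C(\F_p)$, and your dichotomy ``unexpected $\Leftrightarrow t_1\neq t_2$'' is no longer the right one. This case is not idle for the present paper: in Example Case II every affine point of $C(\F_5)$ is Weierstrass, and Theorem \ref{residue 0J} is invoked there for exactly such discs. The repair is the same trick in the other coordinate: taking $t$ centered at the Weierstrass point of the disc, $\iota$ acts by $t\mapsto -t$, so $\omega_i/dt$ is an even power series and $F_i$ is odd; the expected (hyperelliptic-pair) locus is $t_2=-t_1$, the annihilation condition reads $F_i(t_1)+F_i(t_2)=F_i(t_1)-F_i(-t_2)=(t_1+t_2)\cdot(\text{unit})$, and the same unit estimate forces $t_2=-t_1$. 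Also note that in this Weierstrass case the two conjugate points genuinely may live only over an inert or ramified quadratic extension of $\Q_p$ (whereas in your split-reduction case one can show $P_1,P_2\in C(\Q_p)$), so the extension-field convergence estimate you set up is needed there in earnest.
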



\section{Explicit computations}

Now, we restrict to the following setup. Let $C$ be a hyperelliptic curve of genus $3$ over $\Q$ defined by an equation $y^2 = f(x)$, where $\deg f(x) = 7$. Let $\iota$ denote the hyperelliptic involution, and let $\infty$ denote the unique point at infinity on $C$. Let us denote by $\psi, i, j,$ the morphisms $\psi_\infty,i_\infty,j_\infty$, respectively, in diagram \eqref{construction}. 

\subsection{Classification of \texorpdfstring{$\divisor(w_1\wedge w_2)$}{TEXT}}

In this subsection, we fix a prime $p\geq 5$ of good reduction for $C$. As before, let $k$ denote $\F_p^{\rm alg}$.
Let $\omega_1$ and $\omega_2$ be two basis elements of $\Ann(p, J(\Q))$ such that $w_1$ and $w_2$, their respective reductions mod $p$,   are nonzero. (Note that such a basis exists for $p \geq 5$ and appropriate $p$-adic precision, as in \cite{Balakrishnan}. In particular, we do not yet need to assume that $p \geq 521$.)

For $n=1,2$, let us rewrite $\omega_n$ in the canonical basis of $H^0(J,\Omega^1_{J/\Q_p})$ as follows:
\begin{equation*}
\omega_n=\alpha_{n0}i_{*}\frac{dx}{y}+\alpha_{n1}i_{*}\frac{xdx}{y}+\alpha_{n2}i_{*}\frac{x^2dx}{y},
\end{equation*}
where $\alpha_{nm}\in \Z_p$. Consequently, we have the following expression:
\begin{equation}\label{wedge equation}
w_1\wedge w_2=\beta_{01} i_{*}\frac{dx}{y}\wedge i^{*}\frac{xdx}{y}+\beta_{02} i^{*}\frac{dx}{y}\wedge i_{*}\frac{x^2dx}{y}+\beta_{12} i^{*}\frac{xdx}{y}\wedge i_{*}\frac{x^2dx}{y},
\end{equation}
where $\beta_{nm}$ is the reduction modulo $p$ of $\alpha_{1n}\alpha_{2m}-\alpha_{1m}\alpha_{2n}$. The following lemma allows us to express $w_1\wedge w_2$ restricted to $W_2$ in terms of local parameters in $C^2$.

\begin{lemma} \label{lemma divisor} Let $x_i,y_i$ be local coordinates of the $i$-th component of $C^2$. Then we have
\begin{equation}
\psi^{*}(w_1\wedge w_2)=(\beta_{01}+\beta_{02}(x_1+x_2)+\beta_{12}x_1x_2)(x_2-x_1)\frac{dx_1\wedge dx_2}{y_1y_2}.
\end{equation}
\end{lemma}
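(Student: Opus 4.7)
The plan is to pull the equality back from $C^{(2)}$ to $C^2$ via the quotient morphism $\pi$, so that I can work with the explicit local coordinates $(x_1,y_1,x_2,y_2)$. Since $\pi^*$ is injective on differential forms (it is \'etale on a dense open set), it suffices to establish the claimed identity after applying $\pi^*$, i.e., to compute $(\psi\circ\pi)^*(w_1\wedge w_2)$.

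The key structural observation is that $\psi\circ\pi\colon C^2\to J$ coincides with $\mu\circ(i\times i)$, where $\mu\colon J\times J\to J$ is the group law. Because the differentials $i_*\tfrac{x^j\,dx}{y}\in H^0(J,\Omega^1_{J/\Q_p})$ are translation-invariant, pullback by $\mu$ sends each of them to the sum of its pullbacks under the two projections $J\times J\to J$. Combined with $i^*i_*\tfrac{x^j\,dx}{y}=\tfrac{x^j\,dx}{y}$, this yields the concrete formula
\begin{equation*}
(\psi\circ\pi)^*\!\left(i_*\tfrac{x^j\,dx}{y}\right)=\frac{x_1^j\,dx_1}{y_1}+\frac{x_2^j\,dx_2}{y_2}
\end{equation*}
on $C^2$, for $j=0,1,2$.

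Next I substitute this into the expressions for $w_1$ and $w_2$. Writing $P_n(x):=\alpha_{n0}+\alpha_{n1}x+\alpha_{n2}x^2$ (mod $p$), one obtains
\begin{equation*}
(\psi\circ\pi)^*w_n=\frac{P_n(x_1)\,dx_1}{y_1}+\frac{P_n(x_2)\,dx_2}{y_2}.
\end{equation*}
Taking the wedge of these two $1$-forms, the $dx_1\wedge dx_1$ and $dx_2\wedge dx_2$ pieces vanish, and after collecting the remaining two terms I am left with
\begin{equation*}
(\psi\circ\pi)^*(w_1\wedge w_2)=\frac{P_1(x_1)P_2(x_2)-P_1(x_2)P_2(x_1)}{y_1y_2}\,dx_1\wedge dx_2.
\end{equation*}

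It then remains to simplify the numerator. Being antisymmetric in $(x_1,x_2)$, it is divisible by $(x_2-x_1)$; expanding the determinantal expression $P_1(x_1)P_2(x_2)-P_1(x_2)P_2(x_1)$ and grouping by pairs $(j,k)$ with $j<k$ regroups it as $\sum_{j<k}(\alpha_{1j}\alpha_{2k}-\alpha_{1k}\alpha_{2j})(x_1^jx_2^k-x_1^kx_2^j)$. Using the factorizations $x_2-x_1$, $x_2^2-x_1^2=(x_2-x_1)(x_1+x_2)$, and $x_1x_2^2-x_1^2x_2=(x_2-x_1)x_1x_2$, together with the definition $\beta_{jk}\equiv\alpha_{1j}\alpha_{2k}-\alpha_{1k}\alpha_{2j}\pmod{p}$, gives the desired identity. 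The only real step here is the algebraic manipulation in this final paragraph; everything else is formal properties of pullbacks and the translation-invariance of differentials on $J$.
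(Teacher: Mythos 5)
Your proof is correct and follows essentially the same route as the paper: the identity $(\psi\circ\pi)^*\bigl(i_*\tfrac{x^j dx}{y}\bigr)=\tfrac{x_1^j dx_1}{y_1}+\tfrac{x_2^j dx_2}{y_2}$ (which the paper simply cites from Milne's Proposition 5.3, and which you rederive from translation-invariance) followed by the wedge computation and the factorization of the antisymmetric numerator by $(x_2-x_1)$. Your explicit remark that one may check the identity after applying the injective $\pi^*$ is a small but welcome clarification of the paper's implicit use of $C^2$-coordinates.
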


\begin{proof}
By \cite[Proposition 5.3]{MilneAV}, the morphism $\psi^{*}i_{*}\colon \Omega_C\to \Omega_{C^{(2)}}$ is defined by $\psi^{*}i_{*}\omega=\omega_{\pi_{1}(C^2)}+\omega_{\pi_{2}(C^2)}$. Therefore we obtain
\begin{equation*}
\psi^{*}i_{*}\frac{x^{n}dx}{y}=\frac{x_1^{n}dx_1}{y_1}+\frac{x_2^{n}dx_2}{y_2}
\end{equation*}
for $n=0,1,2$. Applying $\psi^{*}$ to \eqref{wedge equation} yields the desired result.
\end{proof}

\begin{remark}\label{D +Siksek}
After a similar analysis to that in Lemma \ref{lemma divisor} of the $2$-form $w_1 \wedge w_2$, one can show the following result: for any genus $3$ curve and any $[(P_1, P_2)] \in C^{(2)}(\Q)$, the matrix $A(P_1, P_2)$ in Theorem \ref{Siksek method} is invertible modulo $p$ if and only if the point $[(P_1) + (P_2) - 2(\infty)]$ is not in the support of $D$ defined in Theorem \ref{ThmOver}.

In what follows, we use a combination of Theorem \ref{residue 0J} and the Caro--Pasten method, exploiting the upper bound for $x$ contained in the support of $D$.
\end{remark}

Now, considering Theorem \ref{ThmOver}, our goal is to compute the divisor of $w_1\wedge w_2$ restricted to $W_2$. Recall that $C^{(2)}$ is the blow-up at the origin of $W_2$ with the exceptional divisor $\nabla$, which is
the push-forward under $\pi:C^2\to C^{(2)}$ of the divisor $\{
(P,\iota(P))\colon P\in C\}$ (see \cite[pp. 739]{McCrory1992} for a detailed proof).  Thus for every $\eta\in H^0(J,\Omega^{1}_{J/k})$ we have
\begin{equation*}
\divisor \left(\eta\Big|_{W_2}\right)=\psi_{*}(\divisor(\psi^{*}\eta)-\nabla).
\end{equation*}
Hence, we need to compute $\divisor(\psi^{*}(\omega_1\wedge\omega_2))\in \Div(C^{(2)})$ in Lemma \ref{lemma divisor}.

\begin{lemma}\label{Divisor D}
The Weil divisor $D$ on $W_2$ associated with $w_1\wedge w_2$ satisfies one of the following conditions: 
\begin{itemize}
\item[(i)] $D=C_P+C_{\iota(P)}$ for some $P\in C(k)$, where, for $P\in C(k)$, $C_P\subset J$ denotes the curve $i(C)+[(P)-(\infty))]$.
\item[(ii)] $D=\psi_{*}\pi_{*}Z$, where $Z=\mathbb{V}(\beta_{01}+\beta_{02}(x_1+x_2)+\beta_{12}x_1x_2)\subset C^2$.
\end{itemize}
\end{lemma}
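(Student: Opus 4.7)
The plan is to compute $D = \divisor(w_1\wedge w_2|_{W_2})$ by first computing the divisor of $\pi^*\psi^*(w_1\wedge w_2)$ on $C^2$ using the explicit formula from Lemma~\ref{lemma divisor}, and then descending through the double cover $\pi\colon C^2\to C^{(2)}$ and the blow-down $\psi\colon C^{(2)}\to W_2$. Writing $F = \beta_{01}+\beta_{02}(x_1+x_2)+\beta_{12}x_1x_2$, Lemma~\ref{lemma divisor} gives $\pi^*\psi^*(w_1\wedge w_2) = F\cdot (x_2-x_1)\cdot \frac{dx_1\wedge dx_2}{y_1y_2}$, and I analyze each factor.

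Because $\infty\in C$ is a Weierstrass point of the odd-degree model, $\divisor(dx/y) = 4\cdot \infty$ on $C$, so $\divisor(\frac{dx_1\wedge dx_2}{y_1y_2}) = 4\infty_1 + 4\infty_2$, with $\infty_1 = \{\infty\}\times C$ and $\infty_2 = C\times\{\infty\}$. A direct analysis of zeros and poles gives $\divisor(x_2-x_1) = \Delta + \Delta' - 2\infty_1 - 2\infty_2$, where $\Delta'=\{(P,\iota(P)):P\in C\}$ denotes the antidiagonal. For $F$, one has $\divisor(F) = Z - 2\infty_1 - 2\infty_2$ with $Z = \mathbb{V}(F)\subset C^2$ when $\deg F\geq 1$, and $\divisor(F) = 0$ when $F$ is a nonzero constant. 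The contributions along $\infty_1,\infty_2$ cancel exactly, leaving
\[
\divisor(\pi^*\psi^*(w_1\wedge w_2)) = Z + \Delta + \Delta'
\]
in the non-constant case, and $\Delta + \Delta' + 2\infty_1 + 2\infty_2$ in the constant case.

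To descend to $C^{(2)}$, I use that $\pi$ is ramified along $\Delta$ with ramification divisor $R = \Delta$, together with $\pi^*\nabla = \Delta'$ and $\pi^*\pi(Z) = Z$ (since $\pi$ is étale at a generic point of $\pi(Z)$). From $\divisor(\pi^*\eta) = \pi^*\divisor(\eta) + R$, I deduce that $\divisor(\psi^*(w_1\wedge w_2))$ on $C^{(2)}$ equals $\pi(Z) + \nabla$ in the first case, and $\nabla + 2\infty_{C^{(2)}}$ in the second. The blow-down formula $\divisor(w_1\wedge w_2|_{W_2}) = \psi_*(\divisor(\psi^*(w_1\wedge w_2)) - \nabla)$ together with $\psi_*\nabla = 0$ then gives $D = \psi_*\pi_*Z$ when $\deg F\geq 1$, and $D = 2\psi_*\infty_{C^{(2)}} = 2\cdot i(C) = 2C_\infty = C_\infty + C_{\iota(\infty)}$ when $F$ is a nonzero constant (already case~(i) with $P = \infty$).

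To distinguish cases (i) and (ii) in the non-constant subcase, a short algebraic computation shows that $F$ factors in $k[x_1,x_2]$ as $\beta_{12}(x_1+c)(x_2+c)$ with $c = \beta_{02}/\beta_{12}$ if and only if $\beta_{12}\neq 0$ and $\beta_{02}^2 = \beta_{01}\beta_{12}$. When this factorization holds, $Z$ is the union of the four curves $\{P,\iota(P)\}\times C$ and $C\times\{P,\iota(P)\}$ for the two points $P,\iota(P)\in C$ with $x$-coordinate $-c$, and these push forward through $\pi$ and $\psi$ to $C_P + C_{\iota(P)}$, yielding case~(i). Otherwise $Z$ has no component of the form $\{P\}\times C$ or $C\times\{P\}$, placing us in case~(ii). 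The main obstacle is the careful bookkeeping of orders along $\infty_1,\infty_2$, where order-$2$ poles from $F$ and from $x_2-x_1$ must cancel exactly against the order-$4$ zeros of $\frac{dx_1\wedge dx_2}{y_1y_2}$; the subsequent ramification/blow-down descent and the elementary factorization analysis of $F$ are then routine.
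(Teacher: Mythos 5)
Your proof is correct and takes essentially the same route as the paper: both compute $\divisor(\psi^{*}(w_1\wedge w_2))$ from the coordinate expression of Lemma \ref{lemma divisor} and then descend through $\pi$ and the blow-down $\psi$, with your version simply making explicit the pole/zero bookkeeping along $\infty_1,\infty_2$, the ramification divisor $\Delta$, and the identity $\pi^{*}\nabla=\Delta'$ that the paper's proof leaves implicit. The one caveat, which you share with the paper's own phrasing of case (ii), is a multiplicity convention: your descent actually produces the divisor $Z^{\flat}$ on $C^{(2)}$ characterized by $\pi^{*}Z^{\flat}=Z$, which is half of the proper push-forward $\pi_{*}Z$ whenever $Z\to\pi(Z)$ is generically two-to-one, so ``$\psi_{*}\pi_{*}Z$'' must be read with that convention --- consistent with your case (i) conclusion $D=C_{P}+C_{\iota(P)}$ rather than $2(C_{P}+C_{\iota(P)})$.
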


\begin{proof} We compute the divisor associated to $\psi^{*}(w_1\wedge w_2)$, from which we deduce the result for $w_1 \wedge w_2$.  Suppose first that there exist $a,b\in k$ such that
\begin{equation}\label{divisor constant}
\psi^{*}(w_1\wedge w_2)=(b x_1+a)(b x_2+a)(x_2-x_1)\frac{dx_1\wedge dx_2}{y_1y_2}.
\end{equation}
In this case, the divisor associated with this $2$-form on $C^{(2)}$ is 
\begin{equation*}
C_{(Q)}+C_{(\iota(Q))} + \nabla,
\end{equation*}
where $Q=(-a/b,\sqrt{f(-a/b)})$, $C_{(Q)}=\{[(R,Q)]\in C^{(2)}\colon R\in C\}$, and $\iota$ denotes the involution on $C^{(2)}$. 

Now let $Z$ be the Zariski closed subset of $C^{2}$ defined by the local equation $\beta_{01}+\beta_{02}(x_1+x_2)+\beta_{12}x_1x_2=0$. Let $Z'$ denote the push-forward of $Z$ via $\pi$. Note that when $\psi^{*}(w_1\wedge w_2)$ is not of the form \eqref{divisor constant}, then the divisor associated with this $2$-form on $C^{(2)}$ is
\begin{equation*}
\divisor(\psi^{*}(w_1\wedge w_2))=Z'+\nabla,
\end{equation*}
which yields the desired result. 
\end{proof}

\begin{observation}\label{Z nonconstant}
Let $ Z_1 $ be an irreducible component of $ Z = \mathbb{V}(\beta_{01} + \beta_{02}(x_1 + x_2) + \beta_{12}x_1x_2) \subset C^2 $. Let $ \varphi: Z \to C $ be the morphism associated with the projection onto the first component of $ C^2 $. Assume that the restriction of $ \varphi $ to $ Z_1 $ is constant. By symmetry, there exist $ a, b \in k $ such that 
\[
\beta_{01} + \beta_{02}(x_1 + x_2) + \beta_{12}x_1x_2 = (ax_1 + b)(ax_2 + b).
\]
In this case, $ \psi^*(w_1 \wedge w_2) $ takes the form given in equation \eqref{divisor constant}.

Now, assume that the restriction of $ \varphi $ to each irreducible component of $ Z $ is surjective. By the previous observation, this is equivalent to $ \psi^*(w_1 \wedge w_2) $ not being in the form given by equation \eqref{divisor constant}. In this case, if we fix $ (x_2, y_2) $, the equation defining $ Z $ also fixes $ x_1 $; hence, $ \varphi $ is generically two-to-one. Consequently, $ Z $ has at most two irreducible components.
\end{observation}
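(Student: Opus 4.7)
The plan is to exploit the fact that the defining equation $P(x_1,x_2) \coloneqq \beta_{01} + \beta_{02}(x_1+x_2) + \beta_{12}x_1 x_2$ of $Z$ is a nonzero polynomial of degree $1$ in each variable, and to combine this with elementary dimension and fiber arguments for the projection $\varphi\colon Z \to C$.

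For the first claim, I would first observe that if $\varphi|_{Z_1}$ is constant with image $(c,d) \in C(k)$, then the inclusion $Z_1 \subseteq \varphi^{-1}(c,d) = \{(c,d)\} \times C$, together with irreducibility of both sides and a dimension count, forces equality. Since $P$ depends only on $x_1,x_2$ and the $x_2$-coordinate map is surjective on $C$, this yields $P(c,x_2) \equiv 0$ as a polynomial in $x_2$, producing two linear relations $\beta_{01}+\beta_{02}c=0$ and $\beta_{02}+\beta_{12}c=0$. Substituting these back into $P$ gives $P(x_1,x_2) = \beta_{12}(x_1-c)(x_2-c)$; note $\beta_{12} \neq 0$, for otherwise all $\beta_{ij}$ vanish and $w_1\wedge w_2=0$ by Lemma~\ref{lemma divisor}, contrary to the standing hypothesis. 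Since $k=\F_p^{\rm alg}$ is algebraically closed, $\beta_{12}$ has a square root $a$, and setting $b=-ac$ gives the desired factorization $P=(ax_1+b)(ax_2+b)$. Inserting this into Lemma~\ref{lemma divisor} recovers the form in equation~\eqref{divisor constant}. For the reverse direction of the equivalence invoked in the second part of the statement, I would note that if $P$ has factored form $(ax_1+b)(ax_2+b)$ with $a \neq 0$, then set-theoretically $Z = \{x_1=-b/a\} \cup \{x_2=-b/a\}$, and $\varphi$ is manifestly constant on the first of these components.

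For the surjective case, the plan is a degree computation for $\varphi$. Since we are not in the factored case, the linear-in-$x_1$ form $(\beta_{02}+\beta_{12}x_2)x_1 + (\beta_{01}+\beta_{02}x_2)$ has nonvanishing leading coefficient for generic $x_2$, so by symmetry, fixing a generic $(x_1,y_1) \in C$ determines $x_2$ uniquely via $P(x_1,x_2)=0$ and leaves at most two choices of $y_2$ via $y_2^2=f(x_2)$. Thus the generic fiber of $\varphi$ contains at most two points, so $\deg \varphi \le 2$. Since each irreducible component $Z_i$ dominates $C$ by hypothesis, each contributes degree at least $1$ to $\varphi$, and hence $Z$ has at most two components.

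I do not expect a serious obstacle here: the main substantive content is the factorization statement for bilinear symmetric polynomials, which amounts to saying that the symmetric matrix of coefficients has rank at most one, and the fiber-size computation is routine. The one point that will require care is making the equivalence between ``$\varphi$ is constant on some component'' and ``$\psi^{*}(w_1\wedge w_2)$ has the form \eqref{divisor constant}'' precise in both directions, and in particular ruling out the degenerate possibility $\beta_{12}=0$.
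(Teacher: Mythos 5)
Your argument is correct and follows essentially the same route as the paper's (terse) inline justification: in the constant case you force $Z_1=\{(c,d)\}\times C$, extract the two linear relations on the $\beta_{ij}$ that give the rank-one factorization $\beta_{12}(x_1-c)(x_2-c)$, and in the surjective case you use the same generic two-to-one fiber count for $\varphi$. The only difference is that you supply the explicit derivation hiding behind the paper's phrase ``by symmetry'' (including why $\beta_{12}\neq 0$), which is a faithful filling-in of detail rather than a different approach.
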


The following lemma classifies when $Z$ in Lemma \ref{Divisor D}(ii) is reducible.

\begin{lemma}\label{Z reducible}
Suppose that $\psi^{*}(w_1\wedge w_2)$ is not of the form given by \eqref{divisor constant}. Then $Z$ is reducible precisely in one of the following scenarios:
\begin{itemize}
\item [(i)]  $\beta_{12}=0$ and the function $x\mapsto -(x+\beta_{01}/\beta_{02})$ permutes the roots of $f$.
\item[(ii)]  $\beta_{12}\neq 0$, $-\beta_{02}/\beta_{12}$ is a root of $f$, and the function $x\mapsto (-\beta_{02}x-\beta_{01})/(\beta_{12}x+\beta_{02})$ permutes the other roots of $f$.
\end{itemize}
\end{lemma}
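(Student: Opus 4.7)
The plan is to recast the reducibility of $Z$ as a square-root question in the function field $k(C)$, and then to translate this into a divisorial parity condition on $\P^1$ that can be checked case-by-case. The starting point is the projection $\varphi\colon Z\to C$ onto the first factor of $C^2$, which by Observation \ref{Z nonconstant} is surjective on each irreducible component of $Z$. Since the defining equation of $Z$ is linear in $x_2$, one solves $x_2 = \tau(x_1)$ with
\[
\tau(x) = \begin{cases} -(x + \beta_{01}/\beta_{02}) & \text{if } \beta_{12}=0,\\[2pt] \dfrac{-\beta_{02}x-\beta_{01}}{\beta_{12}x+\beta_{02}} & \text{if } \beta_{12}\neq 0,\end{cases}
\]
and the hypothesis that $\psi^{*}(w_1\wedge w_2)$ is not of the form \eqref{divisor constant} amounts to $\beta_{02}\neq 0$ in the first case and to $\beta_{02}^2\neq\beta_{01}\beta_{12}$ in the second (by matching coefficients against $(bx_1+a)(bx_2+a)$), so that $\tau$ is always a genuine involution of $\P^1$. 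Thus $Z$ is the degree-$2$ cover of $C$ cut out by $y_2^2 = f(\tau(x_1))$, and $Z$ is reducible if and only if $f(\tau(x))$ is a square in $k(C)$.

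Next I would invoke the standard criterion that $g(x)\in k(x)\subset k(C) = k(x)[y]/(y^2 - f(x))$ is a square in $k(C)$ if and only if either $g$ or $g/f$ is a square in $k(x)$, equivalently the divisor on $\P^1$ of one of these has only even multiplicities. Applied to $g(x) = f(\tau(x))$, the two cases are handled in turn. When $\beta_{12}=0$, $f(\tau(x))$ is a polynomial of odd degree $7$ and hence never itself a square; so reducibility reduces to $f(\tau(x))/f(x)$ being a square in $k(x)$, and since both numerator and denominator are separable of degree $7$, this is equivalent to them having the same root set, i.e.\ $\tau$ permuting the roots of $f$. Conversely, in that case a leading-coefficient comparison gives $f(\tau(x)) = -f(x)$, which is a square in $k(C)$ because $-1$ is a square in the algebraically closed field $k$.

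When $\beta_{12}\neq 0$, $\tau$ is a non-affine M\"obius involution exchanging $\infty$ and $-\beta_{02}/\beta_{12}$. Writing the roots of $f$ as $r_1,\ldots,r_7$, one computes the divisor of $f(\tau(x))/f(x)$ on $\P^1$ as
\[
\sum_{i=1}^{7}[\tau(r_i)] - \sum_{i=1}^{7}[r_i] + 7[\infty] - 7\bigl[-\beta_{02}/\beta_{12}\bigr]
\]
and checks evenness point by point. The coefficient at $\infty$ equals $7$ plus the number of $r_i$ coinciding with $-\beta_{02}/\beta_{12}$; parity forces this to be exactly one, which is the first condition of (ii). Granting this, the coefficient at $-\beta_{02}/\beta_{12}$ becomes $-8$ (even), and the coefficients at the remaining finite points are all even precisely when $\tau$ permutes the other six roots of $f$, which is the second condition. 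A parallel check rules out $f(\tau(x))$ itself being a square in $k(x)$, since its divisor retains an odd-order zero or pole at one of $\infty$ or $-\beta_{02}/\beta_{12}$. The main technical hurdle is the careful bookkeeping at these two distinguished points, and invoking the nondegeneracy $\beta_{02}^2\neq\beta_{01}\beta_{12}$ to ensure $\tau(r_i)\neq -\beta_{02}/\beta_{12}$ for every $r_i\neq -\beta_{02}/\beta_{12}$, which would otherwise disrupt the divisor tally.
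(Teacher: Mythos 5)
Your proof is correct, and it takes a genuinely different route from the paper's. You recast reducibility of $Z$ as the statement that $f(\tau(x_1))$ is a square in the function field $k(C)$ (using that every component of $Z$ dominates $C$ under $\varphi$, via Observation \ref{Z nonconstant}), then apply the standard description of $k(C)^{*2}\cap k(x)^*$ for the quadratic extension $k(C)/k(x)$ and reduce everything to a parity check on divisors over $\P^1$; the hypothesis that $\psi^{*}(w_1\wedge w_2)$ is not of the form \eqref{divisor constant} enters, correctly, as the nondegeneracy $\beta_{02}^2\neq\beta_{01}\beta_{12}$ making $\tau$ an honest involution of $\P^1$, and your bookkeeping at the two special points $\infty$ and $-\beta_{02}/\beta_{12}$ (coefficients $7+\#\{r_i=-\beta_{02}/\beta_{12}\}$ and $-8$) is accurate. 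The paper instead argues geometrically: since $\varphi$ restricted to each component is an isomorphism onto $C$, the intersection $Z_1\cap Z_2$ is exactly the branch locus of $\varphi$, and the Jacobian criterion shows $Z$ is singular at a branch point only when both $y$-coordinates vanish, which forces the root-permutation conditions; the converse is obtained by exhibiting the explicit factorization \eqref{reduction}. Your approach is cleaner in that both implications fall out of a single divisor-parity computation (with squareness automatic over the algebraically closed field $k$), and it sidesteps the singularity analysis; the paper's approach has the advantage of producing the explicit equations of the two components and the constant $\gamma$ of \eqref{eq polynomials}, which are needed downstream in Lemma \ref{D reducible} to decide whether the $S_2$-action swaps the components. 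If you wanted your argument to feed into that lemma you would still need to extract the explicit square root $y_1=\pm\sqrt{\gamma}\,y_2(\beta_{12}x_1+\beta_{02})^4$, but for the statement at hand your proof is complete.
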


\begin{proof}
First note that  $Z$ has at most two irreducible components, by Observation \ref{Z nonconstant}.

Let $ \varphi: Z \to C $ be the morphism associated with the projection onto the first component of $ C^2 $. Suppose that $Z$ is reducible, i.e., $Z=Z_1\cup Z_2$.
Therefore, $Z$ is singular at every point in $Z_1\cap Z_2$, precisely the set of branched points of $\varphi$.

Note that $P=((a_1,b_1),(a_2,b_2))\in Z$ is a branched point of $\varphi$ if $b_2=0$, and computing the Jacobian matrix implies that $Z$ is singular at $P$ if $b_1=b_2=0$. Consequently, when $Z$ is reducible and  $\xi\neq-\frac{\beta_{02}}{\beta_{12}}$ is also a root of $f$, we have that $(-\beta_{02}\xi-\beta_{01})/(\beta_{12}\xi+\beta_{02})$ is a root of $f$. Therefore, the function $x\mapsto (-\beta_{02}x-\beta_{01})/(\beta_{12}x+\beta_{02})$ permutes the roots of $f$ that are different from $-\frac{\beta_{02}}{\beta_{12}}$. 

Now note that when $\beta_{12}=0$, we have that $\{(\infty,\infty)\}=Z\setminus \mathbb{V}(\beta_{01}+\beta_{02}(x_1+x_2)+\beta_{12}x_1x_2)$ and this point is always a branched point of $\varphi$, and $Z$ is singular at $P$. On the other hand, when $\beta_{12}\neq0$, the points of $Z\setminus \mathbb{V}(\beta_{01}+\beta_{02}(x_1+x_2)+\beta_{12}x_1x_2)$ are of the form $(\infty,(-\frac{\beta_{02}}{\beta_{12}}, \pm b))$ and $((-\frac{\beta_{02}}{\beta_{12}}, \pm b),\infty)$. In this case, $((-\frac{\beta_{02}}{\beta_{12}}, \pm b),\infty)$ are branched points of $\varphi$. Computing the Jacobian matrix, one gets that $Z$ is singular at these points precisely when $b=0$, i.e., $-\frac{\beta_{02}}{\beta_{12}}$ is a root of $f$. Consequently, if $Z$ is reducible, then (i) or (ii) is satisfied. 

Assume that (i) or (ii) is satisfied. In either of these cases, we have that the polynomial
\begin{equation*}
f\left(-\frac{\beta_{02}x+\beta_{01}}{\beta_{12}x+\beta_{02}}\right)(\beta_{12}x+\beta_{02})^{8}
\end{equation*}
has degree $7$ and the same roots as $f$. Therefore, there is $\gamma\in \F_p^{*}$ such that 
\begin{equation}\label{eq polynomials}
f(x)=\gamma f\left(-\frac{\beta_{02}x+\beta_{01}}{\beta_{12}x+\beta_{02}}\right)(\beta_{12}x+\beta_{02})^{8}.  
\end{equation}
This implies that $y_1^2=\gamma y_2^2(\beta_{12}x_1+\beta_{02})^{8}$. Therefore, we have that 
\begin{equation}\label{reduction}
Z=\mathbb{V}_Z(y_1-\sqrt{\gamma}y_2(\beta_{12}x_1+\beta_{02})^{4})\cup  \mathbb{V}_Z(y_1+\sqrt{\gamma}y_2(\beta_{12}x_1+\beta_{02})^{4}),    
\end{equation}
implying that $Z$ is reducible.
\end{proof}

Let us assume that $Z=Z_1\cup Z_2$ is reducible. By Lemma \ref{Divisor D}, $D=\psi_{*}\pi_{*}Z$ has at most two irreducible components. The following lemma implies that we can apply the method of Caro--Pasten (Theorem \ref{CoroQuadratic3}) unless $D$ has two different irreducible components, which occurs precisely when $\pi(Z_1)\neq \pi(Z_2)$. Furthermore, we can characterize when this happens.

\begin{lemma}\label{D reducible}
Suppose that $D$ has two irreducible components, i.e., $D=D_1+ D_2$, where $D_1$ and $D_2$ are different irreducible curves. Then the scenario of Lemma \ref{Z reducible}(ii) occurs and 
\begin{equation}
f'(-\beta_{02}/\beta_{12})=a_7(\beta_{02}^2-\beta_{12}\beta_{01})^3,
\end{equation}
where $a_7$ is the leading coefficient of $f$. In this case, there is an irreducible component of $D$ with geometric genus $1$.
\end{lemma}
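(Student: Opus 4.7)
The plan is to exploit the swap involution $\sigma\colon C^2\to C^2$ exchanging the two factors, which is the covering involution of $\pi\colon C^2\to C^{(2)}$. Since the defining polynomial $Q(x_1,x_2):=\beta_{01}+\beta_{02}(x_1+x_2)+\beta_{12}x_1x_2$ of $Z$ is symmetric, $\sigma(Z)=Z$, so $\sigma$ either preserves each component $Z_i$ setwise or exchanges them. In the first case $\pi|_{Z_i}\colon Z_i\to\pi(Z_i)$ is the $2$-to-$1$ quotient by $\sigma|_{Z_i}$, giving $\pi_*(Z_1+Z_2)=2\pi(Z_1)+2\pi(Z_2)$ with two distinct irreducible components; in the second case $\pi|_{Z_i}$ is birational onto its image, $\pi(Z_1)=\pi(Z_2)$, and $\pi_*(Z_1+Z_2)=2\pi(Z_1)$ is supported on a single curve. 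Hence the hypothesis that $D$ has two different irreducible components forces $\sigma(Z_i)=Z_i$ for $i=1,2$.

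Setting $\Delta:=\beta_{02}^2-\beta_{12}\beta_{01}$, one has the identity $(\beta_{12}x_1+\beta_{02})(\beta_{12}x_2+\beta_{02})=\Delta$ on $Z$, obtained by eliminating $\beta_{12}x_1x_2$ via $Q=0$. Plugging this into the defining equations of $Z_1,Z_2$ in \eqref{reduction} and comparing them with their images under $x_1\leftrightarrow x_2$ yields that $\sigma(Z_1)=Z_1$ if and only if $\gamma\Delta^4=1$, while $\sigma(Z_1)=Z_2$ if and only if $\gamma\Delta^4=-1$. In scenario (i) of Lemma \ref{Z reducible} we have $\beta_{12}=0$ and $\Delta=\beta_{02}^2$, and matching the coefficient of $x^7$ on both sides of \eqref{eq polynomials} forces $\gamma=-1/\beta_{02}^8=-1/\Delta^4$, so $\sigma$ swaps $Z_1$ and $Z_2$, contradicting the hypothesis. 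Hence we must be in scenario (ii). Writing $f(x)=a_7(x-x_0)g(x)$ with $x_0=-\beta_{02}/\beta_{12}$ and $\deg g=6$, and using that $\sigma$ permutes the six roots $r_1,\dots,r_6$ of $g$, the identities $\sigma(x)-x_0=\Delta/(\beta_{12}(\beta_{12}x+\beta_{02}))$ and $\sigma(x)-r_i=-(\beta_{02}+r_i\beta_{12})(x-\sigma(r_i))/(\beta_{12}x+\beta_{02})$ allow one to express $f(\sigma(x))(\beta_{12}x+\beta_{02})^8$ as an explicit scalar multiple of $f(x)$. Reading $\gamma$ off the leading coefficient and imposing $\gamma\Delta^4=1$ yields the claimed relation involving $f'(x_0)=g(x_0)$, $a_7$, and $\Delta$.

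For the genus assertion, the first-coordinate projection restricts to an isomorphism $Z_1\cong C$, so $Z_1$ has geometric genus $3$, and $\pi|_{Z_1}$ is the degree-$2$ quotient by $\sigma|_{Z_1}$. The M\"obius involution induced by $\sigma$ on the $x_1$-line $\P^1$ has exactly two fixed points $\alpha_1,\alpha_2$, the roots of $\beta_{12}x^2+2\beta_{02}x+\beta_{01}$ (distinct since $\Delta\neq 0$). A short calculation from this quadratic gives $(\beta_{12}\alpha_i+\beta_{02})^2=\Delta$, and combining with $\gamma=1/\Delta^4$ shows that both preimages $(\alpha_i,\pm\sqrt{f(\alpha_i)})\in Z_1$ are fixed by $\sigma|_{Z_1}$; this produces exactly $4$ ramification points whenever $f(\alpha_1)f(\alpha_2)\neq 0$. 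Riemann--Hurwitz then yields $2\cdot 3-2=2(2g(\pi(Z_1))-2)+4$, i.e., $g(\pi(Z_1))=1$. Finally, $\psi$ contracts only $\nabla$ and $\pi(Z_1)\neq\nabla$, so $\psi$ is birational on $\pi(Z_1)$ and the component $D_1:=\psi_*\pi_* Z_1$ has geometric genus $1$. The main obstacle is controlling the fixed-point count on $Z_1$: we must verify that neither $\alpha_1$ nor $\alpha_2$ is a root of $f$ (otherwise a fixed point would be lost) and treat the behavior at infinity with care.
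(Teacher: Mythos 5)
Your overall strategy coincides with the paper's: both proofs reduce the hypothesis that $D$ has two components to the condition that the swap involution $\sigma$ preserves each $Z_i$, translate this via \eqref{reduction} into $\gamma(\beta_{02}^2-\beta_{12}\beta_{01})^4=1$, rule out scenario (i) of Lemma \ref{Z reducible} by computing $\gamma$ from the top-degree coefficient of \eqref{eq polynomials}, and deduce the genus statement from Riemann--Hurwitz applied to the degree-$2$ map from $Z_1\cong C$ (genus $3$) onto its image in $C^{(2)}$. You diverge in two places. For the identity $f'(-\beta_{02}/\beta_{12})=a_7(\beta_{02}^2-\beta_{12}\beta_{01})^3$, the paper simply differentiates \eqref{eq polynomials} at $x_0=-\beta_{02}/\beta_{12}$: every term of $f(\sigma(x))(\beta_{12}x+\beta_{02})^8$ coming from a coefficient $a_k$ with $k\le 6$ vanishes to order at least $2$ at $x_0$, so only the $k=7$ term survives differentiation and the relation between $f'(x_0)$, $\gamma$, $a_7$, and $\beta_{02}^2-\beta_{12}\beta_{01}$ drops out in one line. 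Your route --- factoring $f=a_7(x-x_0)g$ and pushing the M\"obius involution through the six roots of $g$ --- does arrive at the same relation (note $f'(x_0)=a_7\,g(x_0)$, not $g(x_0)$), but you leave it as a sketch; since this identity is the entire content of the first assertion, the computation needs to be carried out rather than asserted.

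The loose end you flag at the end is a genuine gap as written, but it closes easily, and the paper's formulation of the Riemann--Hurwitz step avoids it altogether. You try to pin down the fixed-point set of $\sigma|_{Z_1}$ exactly, which forces you to worry about whether some $\alpha_i$ is a root of $f$. The paper only needs the sandwich $1\le r\le 4$ for the number $r$ of (tame) ramification points of $Z_1\to\widetilde{Z_1'}$: it is at least $1$ because $Z_1$ meets the diagonal (the quadratic $\beta_{12}x^2+2\beta_{02}x+\beta_{01}$ is nonconstant, as we are not in the situation of \eqref{divisor constant}), and at most $4$ because the diagonal meets $Z$ in at most four points. Riemann--Hurwitz gives $r=8-4g(\widetilde{Z_1'})$, so necessarily $r=4$ and $g(\widetilde{Z_1'})=1$, with no case analysis. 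In particular the degenerate configurations you were worried about (an $\alpha_i$ with $f(\alpha_i)=0$ splitting the fixed points between the two components) are excluded a posteriori by the divisibility $r\equiv 0 \pmod 4$, and there is no fixed point over $x=\infty$ because in scenario (ii) the M\"obius involution interchanges $\infty$ with $-\beta_{02}/\beta_{12}$.
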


\begin{proof}
By Lemma \ref{Divisor D}, $D=\psi_{*}\pi_{*}Z$. Note that $D$ has two distinct irreducible components if and only if $Z=Z_1\cup Z_2$
and 
\begin{equation}\label{Z1neqZ2}
\psi_{*}\pi_{*}Z_1\neq \psi_{*}\pi_{*}Z_2.    
\end{equation}
Since $Z$ is invariant by the action of $S_2=\langle\sigma\rangle$, then $S_2$ acts on its irreducible components. Thus,  \eqref{Z1neqZ2} is equivalent to 
having $\sigma(Z_i)= Z_i$ for $i=1,2$. By \eqref{reduction}, this occurs precisely when $$\gamma(\beta_{02}^2-\beta_{12}\beta_{01})^4=1.$$ 
First, assume that $\beta_{12}=0$ and, without loss of generality, $\beta_{02}=1$. Since $f$ has odd degree, \eqref{eq polynomials} implies that $\gamma=-1$, and so $\gamma(\beta_{02}^2-\beta_{12}\beta_{01})^4\neq 1$. Consequently, $\sigma(Z_1)=Z_2$ and vice versa, which implies $D$ is irreducible.

Let us assume that $\beta_{12}\neq0$. By Lemma \ref{Z reducible}, we have that $-\beta_{02}/\beta_{12}$ is a root of $f$, then $f'(-\beta_{02}/\beta_{12})\neq 0$. Evaluating the derivative of \eqref{eq polynomials} at $-\beta_{02}/\beta_{12}$ we have that
$$
f'(-\beta_{02}/\beta_{12})=\gamma a_7(\beta_{02}^2-\beta_{12}\beta_{01})^7.
$$
Therefore, when $f'(-\beta_{02}/\beta_{12})=a_7(\beta_{02}^2-\beta_{12}\beta_{01})^3$ we obtain that $\gamma(\beta_{02}^2-\beta_{12}\beta_{01})^4=1$ and this proves the first statement of the lemma.

Since $Z$ is reducible, $\varphi:Z_i\to C$ is an isomorphism for $i=1,2$. Thus, $Z_i$ is non-singular and has genus $g(Z_i)=3$. Since $\psi^{*}(w_1\wedge w_2)$ is not of the form given by \eqref{divisor constant}, the polynomial 
\begin{equation*}
\beta_{01}+2\beta_{02}x+\beta_{12}x^2
\end{equation*}
is not constant. Therefore, $Z$ intersects the diagonal $\Delta$ of $C^2$, and without loss of generality, we assume that $Z_1$ intersects $\Delta$. Let $Z_1'$ be the image of $Z_1$ via $\pi$. Under the assumption that $Z_1$ is invariant under the action of $S_2$, there is a morphism $Z_1\to Z_1'$ of degree $2$  that ramifies because $Z_1\cap\Delta\neq\emptyset$. By \cite[Proposition II.7.14]{Hartshorne}, we have a morphism from $Z_1$ to the normalization $\widetilde{Z_1'}$ of $Z_1'$ which has degree $2$ and is ramified. Since $\ch k\neq 2$, this morphism has only tame ramification. Therefore, Riemann--Hurwitz (see IV.2.4 \textit{loc. cit.}) implies that $g(\widetilde{Z_1'})=1$. Since $C^{(2)}$ is the blow-up at the origin of $W_2$, we have that $\widetilde{Z_1'}$ is the normalization of a connected component of $D$.
\end{proof}

\begin{remark}\label{Z' normalization}
Let $Z'$ be the image of $Z$ via $\pi$ and assume that $Z'$ is irreducible. Since $C^{(2)}$ is the blow-up at the origin of $W_2$, the normalization of $Z'$ is also the normalization of $D$.  We record these maps in the diagram below:
\begin{equation}\label{diagram W2}
\begin{tikzcd}[row sep=2pc, column sep=6pc]
    & \widetilde{Z} \arrow[ddl, "{\widehat{\varphi}}"'] \arrow[d, "{\nu}"] \arrow[r, "{\widetilde{\pi}}"] & \widetilde{Z'} \arrow[d, "{\nu'}"] \arrow{dr}{\widetilde{\psi}} & \\
    & Z \arrow[dl, "{\varphi}"'] \arrow[hookrightarrow]{d} \arrow[r, "{\pi}"] & Z' \arrow[hookrightarrow]{d} \arrow[r, "{\psi}"] & D \arrow[hookrightarrow]{d} \\
    C & C^2 \arrow{l}[swap]{\pi_1} \arrow[r, "{\pi}"] & C^{(2)} \arrow[r, "{\psi}"] & W_2
\end{tikzcd}
\end{equation}
\end{remark}

\begin{observation}
As stated in Remark \ref{condition reduction}, we require the geometric genus of each irreducible component to be greater than $1$. By Lemmas \ref{Divisor D} and \ref{D reducible}, we only exclude the case when $Z$ is reducible, and every component is invariant under the action of $S_2$.
\end{observation}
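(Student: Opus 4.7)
The plan is to exploit the explicit description of $Z$ from Lemma \ref{Z reducible} together with the way the $S_2$-action on $C^2$ descends to $C^{(2)}$. By Lemma \ref{Divisor D} I may assume we are in case (ii), since case (i) produces two components each isomorphic to $i(C)$ of geometric genus $3$, contradicting the genus-$1$ conclusion sought. So $D = \psi_{*}\pi_{*} Z$, and the hypothesis that $D$ has two distinct components translates into the condition that the involution $\sigma \in S_2$ swapping the factors of $C^2$ \emph{fixes} each irreducible component of $Z$ rather than swapping them: otherwise the images under $\pi$ coincide in $C^{(2)}$ and $D$ is irreducible.

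With the decomposition \eqref{reduction} in hand, I would check this $\sigma$-invariance directly: applying $\sigma$ to the component $\mathbb{V}_Z(y_1 - \sqrt{\gamma}\, y_2(\beta_{12}x_1+\beta_{02})^4)$ and comparing against the two equations cutting out $Z$ shows that $\sigma$ preserves the component precisely when $\gamma(\beta_{02}^2 - \beta_{12}\beta_{01})^4 = 1$. The case $\beta_{12}=0$ is then ruled out by comparing leading coefficients in \eqref{eq polynomials} and using that $f$ has odd degree $7$, which forces $\gamma = -1$ and contradicts the $\sigma$-invariance identity. So we land in case (ii) of Lemma \ref{Z reducible}, and differentiating \eqref{eq polynomials} at the root $-\beta_{02}/\beta_{12}$ (nonzero since $p$ is of good reduction, so $f$ has simple roots mod $p$) combined with the relation $\gamma(\beta_{02}^2-\beta_{12}\beta_{01})^4 = 1$ yields the stated identity $f'(-\beta_{02}/\beta_{12}) = a_7(\beta_{02}^2 - \beta_{12}\beta_{01})^3$.

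For the genus-$1$ claim, the first projection $\varphi|_{Z_1}\colon Z_1 \to C$ is an isomorphism, since the defining equation of $Z_1$ expresses $(x_2,y_2)$ as a rational function of $(x_1,y_1)$; in particular $Z_1$ is smooth of genus $3$. Because $\beta_{01} + 2\beta_{02}x + \beta_{12}x^2$ is non-constant (otherwise $\psi^{*}(w_1\wedge w_2)$ would be of the form \eqref{divisor constant}), $Z_1$ meets the diagonal $\Delta \subset C^2$, so the induced degree-$2$ map $Z_1 \to \widetilde{Z_1'}$ obtained by quotienting by the $\sigma$-action on $Z_1$ is ramified. A Riemann--Hurwitz computation for this tamely ramified cover (valid since $\ch k \neq 2$) then pins down the genus of $\widetilde{Z_1'}$ as $1$. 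Since $C^{(2)}\to W_2$ is the blow-up at the origin, $\widetilde{Z_1'}$ serves as the normalization of a component of $D$.

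The main obstacle I anticipate is the Riemann--Hurwitz step: one must count the ramification points of $Z_1 \to \widetilde{Z_1'}$ precisely (they correspond to $Z_1\cap \Delta$) and verify that this count is exactly what is needed to force $g(\widetilde{Z_1'})=1$, and one must cleanly identify the normalization of $Z_1' \subset C^{(2)}$ with the normalization of a component of $D \subset W_2$ across the blow-up $C^{(2)}\to W_2$, carefully tracking the exceptional divisor $\nabla$. Everything else reduces to careful bookkeeping with the explicit equations in \eqref{reduction} and \eqref{eq polynomials}.
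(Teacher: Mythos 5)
Your proposal is correct and follows essentially the same route as the paper: this Observation is a synthesis of Lemmas \ref{Divisor D} and \ref{D reducible}, and your argument reproduces the paper's proof of Lemma \ref{D reducible} step for step (the reduction to case (ii) of Lemma \ref{Divisor D}, the criterion that $D$ is reducible iff $\sigma$ fixes each component of $Z$, which via \eqref{reduction} becomes $\gamma(\beta_{02}^2-\beta_{12}\beta_{01})^4=1$, the exclusion of $\beta_{12}=0$ from the odd degree of $f$, the derivative identity, and Riemann--Hurwitz for the genus-$1$ component). The ramification count you flag as the main obstacle resolves exactly as you anticipate: the ramification locus of $Z_1\to\widetilde{Z_1'}$ is $Z_1\cap\Delta$, which is cut out on $\Delta\cong C$ by the quadratic $\beta_{12}x^2+2\beta_{02}x+\beta_{01}$ and hence consists of at most $4$ points, so Riemann--Hurwitz ($4=4g(\widetilde{Z_1'})-4+\deg R$ with $0<\deg R\le 4$) forces exactly $4$ ramification points and $g(\widetilde{Z_1'})=1$.
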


\subsection{Bounding \texorpdfstring{$p$ and $m(x)$}{TEXT}}
In this subsection, we give a lower bound on the prime $p$ for which the method applies and an upper bound on the size of $W_2(\Q)$. Let $Z\subset C^2$ denote the Zariski closed set defined by $\beta_{01}+\beta_{02}(x_1+x_2)+\beta_{12}x_1x_2$. We will work separately in the following three different cases:

\begin{itemize}
    \item \textbf{Case I:} $Z$ is reducible and there are $a,b\in \F_p^{\rm alg}$ such that 
    \[
    \beta_{01}+\beta_{02}(x_1+x_2)+\beta_{12}x_1x_2=(b x_1+a)(b x_2+a).
    \]
    \item \textbf{Case II:} $Z$ is reducible, not in the previous case, and 
    \[
    f'(-\beta_{02}/\beta_{12})\neq a_7(\beta_{02}^2-\beta_{12}\beta_{01})^3.
    \]
    \item \textbf{Case III:} $Z$ is irreducible.
\end{itemize}

\noindent We begin with Case I. 
\begin{proposition}[Case I]\label{bound p case 1}
Suppose that there exist $\alpha,\beta\in \F_p$ such that
\begin{equation*}
\psi^{*}(w_1\wedge w_2)=(\beta x_1+\alpha)(\beta x_2+\alpha)(x_2-x_1)\frac{dx_1\wedge dx_2}{y_1y_2}.
\end{equation*}
Then, for every $x\in W_2(\F_p)$ we have $m(x)\leq 6$. Consequently, we can apply Theorem \ref{ThmOver} with any prime of good reduction for $C$ such that $p\geq 11$ and we obtain
\begin{equation}
\#W_2(\Q)\leq \#W_2(\F_p)+2\# C(\F_p)+4.
\end{equation}
\end{proposition}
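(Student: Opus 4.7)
The plan is to apply Theorem \ref{ThmOver} pointwise to bound $m(x)$ for each $x\in W_2(\F_p)$ and then assemble the bound for $\#W_2(\Q)$ via \eqref{prop914} and Observation \ref{sharpness m}. The Case~I hypothesis gives sharp control over both the divisor $D$ associated with $w_1\wedge w_2$ and the pulled-back two-dimensional space $\widetilde V := i^*\Ann(p,J(\Q))\subset H^0(C,\Omega^1)$, and this is what drives all the small per-point bounds.

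First I would make $\widetilde V$ explicit. The Plücker-type coordinates $(\beta_{01},\beta_{02},\beta_{12})$ satisfy $\alpha_{i0}\beta_{12}-\alpha_{i1}\beta_{02}+\alpha_{i2}\beta_{01}=0$ for $i=1,2$, so $\widetilde V$ is cut out inside $\{g(x)\,dx/y:g\in k[x]_{\le 2}\}$ by the single equation $\beta_{12}g_0-\beta_{02}g_1+\beta_{01}g_2=0$. Substituting $(\beta_{01},\beta_{02},\beta_{12})=(\alpha^2,\alpha\beta,\beta^2)$, this becomes $g(-\alpha/\beta)=0$ when $\beta\ne 0$ (so $\widetilde V=(x-x_0)\cdot\{\text{linears}\}$ with $x_0:=-\alpha/\beta$), and $g_2=0$ when $\beta=0$ (so $\widetilde V$ is the space of degree-$\le 1$ polynomials). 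In both cases, every nonzero $g\in\widetilde V$ factors as $(x-x_0)h(x)$ (with $x_0=\infty$ allowed when $\beta=0$), and $h$ varies freely in a one-parameter linear family. By Lemma \ref{Divisor D}(i), the divisor $D$ equals $C_P+C_{\iota(P)}$, where $P$ is a preimage of $x_0$ on $C$ (or $P=\infty$, yielding $D=2\,i(C)$, when $\beta=0$); each component is isomorphic to $C$ via the normalization, and $a_j\in\{1,2\}$.

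Next, for $x\in D(\F_p)$, Theorem \ref{ThmOver} yields $m(x)\le\sum_j a_j(\ord_{Q_0^j}(g\,dx/y)+1)$, where $Q_0^j\in C$ is the unique preimage of $x$ on the normalization of $C_j$. Choosing $h$ not vanishing at $x(Q_0^j)$ forces $\ord_{Q_0^j}(g\,dx/y)=0$ whenever $x(Q_0^j)\ne x_0$ and $Q_0^j\ne\infty$, hence $m(x)\le a_j$ for such $x$. The exceptional $x$'s are exactly those whose preimage sits above $x_0$ or at $\infty$: $x=2\,i(P)$, $x=-2\,i(P)$, and $x=0_J$. A direct computation in the three sub-cases (non-Weierstrass $P$; Weierstrass $P$, where $2\,i(P)=0_J$ and $D=2C_P$; and $\beta=0$, where $P=\infty$) yields $m(x)\le 6$ universally, with equality attained only at $x=0_J$ in the latter two sub-cases where $a_1=2$ combines with a doubled order at the ramified preimage $P$ to give $2(2+1)=6$.

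Finally, I assemble the sum. In the non-Weierstrass sub-case, $\#(C_P\cup C_{\iota(P)})(\F_p)\le 2\#C(\F_p)-1$ since the unique intersection point is $0_J$; combining $m(0_J)\le 4$, $m(\pm 2\,i(P))\le 2$, and $m(x)\le 1$ for the remaining points gives $\sum_{x} m(x)\le 4+4+(2\#C(\F_p)-4)=2\#C(\F_p)+4$. The Weierstrass and $\beta=0$ sub-cases give the same total via $m(0_J)\le 6$ and $m(x)\le 2$ for the other $\le\#C(\F_p)-1$ points of the single component. Since $p\ge 11$ ensures $m(x)\le 6\le p-5$, Observation \ref{sharpness m} gives $\#\bigl(W_2(\Q_p)\cap\overline{J(\Q)}\cap U_x\bigr)\le m(x)+1$, so
\[
\#W_2(\Q)\le\#W_2(\F_p)+\sum_{x\in W_2(\F_p)}m(x)\le\#W_2(\F_p)+2\#C(\F_p)+4.
\]
The main obstacle is the per-point analysis at $x=0_J$: in the non-Weierstrass case the two components contribute independently, each of order $1$, summing to $4$; in the Weierstrass case the single doubled component contributes $2(2+1)=6$ via ramification at the Weierstrass preimage $P$. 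Reconciling these two geometrically distinct configurations so as to yield the same universal bound $m(x)\le 6$ and the same total $\sum m(x)\le 2\#C(\F_p)+4$ is where the bookkeeping is most delicate.
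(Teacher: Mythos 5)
Your proposal is correct and follows essentially the same route as the paper: your Pl\"ucker computation showing $\widetilde V=(x-x_0)\cdot\{\text{linears}\}$ is exactly the paper's explicit choice $w_1=(\beta x+\alpha)\,dx/y$, $w_2=x(\beta x+\alpha)\,dx/y$, and your case split (non-Weierstrass $P$ giving $m(0_J)\le 4$, $m(\pm 2\,i(P))\le 2$, $m\le 1$ elsewhere on $D$; Weierstrass $P$ or $\beta=0$ giving $D=2C_P$ and $m(0_J)\le 6$) together with the summation to $2\#C(\F_p)+4$ and the appeal to Observation \ref{sharpness m} reproduce the paper's argument. The only loose thread is your caveat ``$Q_0^j\ne\infty$'': the points $\pm i(P)$ whose preimage is $\infty$ are not actually exceptional, since taking $\deg h=1$ gives $\ord_\infty\bigl((x-x_0)h\,dx/y\bigr)=4-2-2=0$ and hence $m\le a_j$ there, which is what your final tally implicitly assumes.
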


\begin{proof} 
By Lemma \ref{Divisor D}(a) we have that $D=C_{P}+C_{\iota(P)}$, where $P=(-\alpha/\beta,\sqrt{f(-\alpha/\beta)})$. In this particular case, we can assume that $w_1=(\beta x+\alpha )dx/y$ and $w_2=(\beta x^2+\alpha x)dx/y$ and by the definition of $i$, we have
\begin{equation*}
\ord_{[(R)+(P)-2(\infty)]}(i_{*}w|_{C_P})=\ord_{R}(w),    
\end{equation*}
for any $w\in\Span_{\F_p}\{w_1,w_2\}$. The divisors of $w_1$ and $w_2$ are the following:
\begin{align*}
\divisor(w_1)&=(0,\sqrt{f(0)})+(0,-\sqrt{f(0)})+(-\alpha/\beta,\sqrt{f(-\alpha/\beta)})+(-\alpha/\beta,-\sqrt{f(-\alpha/\beta)}),\\
\divisor(w_2)&=2(\infty)+(-\alpha/\beta,\sqrt{f(-\alpha/\beta)})+(-\alpha/\beta,-\sqrt{f(-\alpha/\beta)}).
\end{align*}
Therefore, when $-\alpha/\beta$ is a root of $f$, we have that $D=2C_P$, and Theorem \ref{ThmOver} implies that
\begin{itemize}
    \item $m(x)=0$ for $x\in (W_2\setminus C_P)(\F_p)$,
    \item $m(x)\leq 2$ for $x\in C_P(\F_p)\setminus \{[2(P)-2(\infty)]\}$, and 
    \item $m([2(P)-2(\infty)])\leq 6$ since $\ord_{[2(P)-2(\infty)]}(i_{*}w|_{C_P})=2$.
\end{itemize}

Applying \cite[Proposition 9.14]{CaroPasten2021} we obtain
\begin{align*}
\#W_2(\Q)&=\sum_{x\in W_2(\F_p)}\#W_2(\Q_p)\cap\overline{J(\Q)}\cap U_x\leq \sum_{x\in W_2(\F_p)}\left\lfloor \frac{p-1}{p-2}m(x)\right\rfloor +1\nonumber\\
&\leq \#W_2(\F_p)+2\# C(\F_p)+4,    
\end{align*}
where the last inequality comes from Observation \ref{sharpness m}, since $m(x)\leq p-3$.

On the other hand, 
when $-\alpha/\beta$ is not a root of $f$, Theorem \ref{ThmOver} implies that
\begin{itemize}
    \item $m(x)=0$ for $x\in (W_2\setminus (C_P\cup C_{\iota(P)}))(\F_p)$,
    \item $m([2(P)-2(\infty)]), m([2(\iota(P))-2(\infty)])\leq 2$ and $m([(P)+(\iota(P))-2(\infty)])\leq 4$, and 
    \item $m(x)\leq1$ for $x\in (C_P\cup C_{\iota(P)})(\F_p)$ different from the three points in the previous item.
\end{itemize}

Applying \cite[Proposition 9.14]{CaroPasten2021} we obtain
\begin{align*}
\#W_2(\Q)&=\sum_{x\in W_2(\F_p)}\#W_2(\Q_p)\cap\overline{J(\Q)}\cap U_x\leq \sum_{x\in W_2(\F_p)}\left\lfloor \frac{p-1}{p-2}m(x)\right\rfloor +1\nonumber\\
&\leq \#W_2(\F_p)+2\# C(\F_p)+2,    
\end{align*}
which yields the desired result.
\end{proof}

We follow the same strategy as above, now in Case II:
\begin{proposition}[Case II]\label{bound p case 2}
Suppose that $Z=\mathbb{V}(\beta_{01}+\beta_{02}(x_1+x_2)+\beta_{12}x_1x_2)\subset C^{2}$ is reducible and its image via $\pi$ is irreducible. Then, for every $x\in W_2(\F_p)$ we have $m(x)\leq 4$. Consequently, we can apply Theorem \ref{ThmOver} with any prime of good reduction for $C$ such that $p\geq 7$ and we obtain the following upper bound for $W_2(\Q)$:
\begin{equation}
\#W_2(\Q)\leq \#W_2(\F_p)+2\#C(\F_p)+4.
\end{equation}
\end{proposition}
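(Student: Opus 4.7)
The plan is to mirror the argument of Proposition \ref{bound p case 1}, now in the reducible-but-diagonal-preserving case. By Lemma \ref{D reducible}, the hypothesis that $Z=Z_1\cup Z_2$ is reducible with irreducible image under $\pi$ forces $\sigma(Z_1)=Z_2$, so $\pi_*Z=2Z'$ for an irreducible curve $Z'\subset C^{(2)}$; applying $\psi_*$, which is an isomorphism away from $\nabla$, we obtain $D=2D_1$ with $D_1=\psi(Z')$. The coefficient $a_1$ appearing in Theorem \ref{ThmOver} is therefore $2$. Since the projection $\pi_1\colon Z_1\to C$ is an isomorphism (one branch of \eqref{reduction} parametrizes $(x_2,y_2)$ as a rational function of $(x_1,y_1)$), the normalization $\nu\colon\widetilde{D_1}\cong C\to W_2$ is given by $P\mapsto[(P)+(P_2(P))-2(\infty)]$, where $x(P_2(P))=-(\beta_{02}x(P)+\beta_{01})/(\beta_{12}x(P)+\beta_{02})$.

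A direct computation using Milne's pullback formula (as in Lemma \ref{lemma divisor}) and the explicit branch $y_2=y_1/(\sqrt{\gamma}(\beta_{12}x_1+\beta_{02})^4)$ shows that for $w_0=c_1w_1+c_2w_2$, one has $\nu^*w_0=\tilde{g}_{w_0}(x)\,dx/y$, where $\tilde{g}_{w_0}\in\F_p[x]_{\le 2}$ depends linearly on $(c_1,c_2)$. Thus the assignment $w_0\mapsto\tilde{g}_{w_0}$ defines a (at most) two-dimensional linear system $V\subset\F_p[x]_{\le 2}$. Applying Theorem \ref{ThmOver} yields
\[
m(x)\le 2\sum_{y\in\nu^{-1}(x)}\bigl(\ord_y(\nu^*w_0)+1\bigr),
\]
with $|\nu^{-1}(x)|\le 2$ (equality occurring only at images of the finitely many nodes of $D_1$, which come from diagonal points of $Z$). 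For every $x\in W_2(\F_p)$ I would choose $w_0$ so that $\tilde{g}_{w_0}$ does not vanish at $x(y)$ for any $y\in\nu^{-1}(x)$: each such vanishing condition cuts out a hyperplane in $\Span\{w_1,w_2\}$, so at most two hyperplanes must be avoided, which is possible unless some $y\in\nu^{-1}(x)$ lies in the base locus of $V$. A classification of the two-dimensional subspaces of the three-dimensional space $\F_p[x]_{\le 2}$ shows that the base locus is supported on at most a single $x$-coordinate with multiplicity one, at which the minimum order of $\nu^*w_0$ is exactly $1$; in either situation $m(x)\le 2\cdot|\nu^{-1}(x)|\le 4$.

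For the final bound I would observe that $m(x)=0$ for $x\notin D_1(\F_p)$, that $m(x)\le 2$ for generic $x\in D_1(\F_p)$, and that $m(x)\le 4$ at the $O(1)$ exceptional points (images of nodes of $D_1$ and preimages of the base locus). Since $|D_1(\F_p)|\le\#C(\F_p)$ (because $\widetilde{D_1}\cong C$), this gives $\sum_{x\in W_2(\F_p)}m(x)\le 2\#C(\F_p)+4$; combined with Observation \ref{sharpness m} (valid since $m(x)\le 4\le p-3$ for $p\ge 7$), this produces the stated bound. The main obstacle is the base-locus analysis: one must rule out that $V$ arising from a choice of $w_1,w_2$ as in Proposition \ref{nonzero wedge} coincides with $H^0(C,K_C-2P)$ for a Weierstrass point $P$ (which would force minimum order $\ge 2$ and worsen the local bound to $m(x)\le 6$), and one must verify that the number of $\F_p$-rational exceptional $x$'s is bounded so that the constant $4$ in the sum is correct.
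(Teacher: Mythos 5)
Your overall skeleton matches the paper's: $\sigma$ swaps the two components, $\pi_*Z=2Z'$ with $Z'\cong C$ the normalization of $D$, the coefficient in Theorem \ref{ThmOver} is $2$, the pullback of any $w_0=c_1w_1+c_2w_2$ to $Z'\cong C$ is $\tilde g_{w_0}(x)\,dx/y$ with $\deg\tilde g_{w_0}\le 2$, and one then chooses $w_0$ pointwise to make the order vanish. However, the step you yourself flag as "the main obstacle" --- ruling out a base point of the linear system $V$, in particular a base point at a Weierstrass point where the minimal order would jump to $2$ and give only $m(x)\le 6$ --- is precisely the crux, and your proposal does not close it. The paper's resolution is short but essential: since we are not in Case I, the quadric $\beta_{01}+\beta_{02}(x_1+x_2)+\beta_{12}x_1x_2$ does not factor as $(bx_1+a)(bx_2+a)$, which is equivalent to $\beta_{02}^2-\beta_{01}\beta_{12}\neq 0$; with the explicit basis $w_1=(\beta_{12}x+\beta_{02})dx/y$, $w_2=(x^2-\beta_{01}/\beta_{12})dx/y$ (resp.\ $(x^2+\beta_{01}x/\beta_{02})dx/y$), the divisors of $w_1$ and $w_2$ on $C$ then have disjoint supports. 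Moreover (this is implicit in the paper and worth making explicit), both $w_1$ and $w_2$ are eigenvectors of the pullback along the automorphism $\tau\colon(x,y)\mapsto\bigl(-\tfrac{\beta_{02}x+\beta_{01}}{\beta_{12}x+\beta_{02}},\, y/(\sqrt{\gamma}(\beta_{12}x+\beta_{02})^4)\bigr)$ with the common eigenvalue $-\sqrt{\gamma}(\beta_{02}^2-\beta_{12}\beta_{01})^2\neq 1$ in Case II, so $\nu^{\bullet}w_i$ is a nonzero scalar multiple of $w_i$ on $Z'\cong C$ and inherits its support. Hence the base locus of $V$ is empty, at every point of $D\setminus\{0_J\}$ some $w_i$ has order exactly $0$ (giving $m(x)\le 2$), and at $0_J$ the two preimages in $Z'\cap\nabla$ each admit a $w$ of order $0$ (giving $m(0_J)\le 4$). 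Without this input your argument only yields $m(x)\le 6$, which also weakens the threshold $p\ge 7$ and the final constant.

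Two smaller inaccuracies: the points of $D$ with two normalization preimages are not images of diagonal points of $Z$ but of the anti-diagonal points $(P,\iota(P))\in Z$ with $x(P)$ a fixed point of the M\"obius involution; these all map to $0_J$, so $0_J$ is the unique such point. And $\#D(\F_p)\le\#C(\F_p)$ is not quite right: $0_J$ can be $\F_p$-rational while its two preimages in $Z'$ are conjugate over $\F_{p^2}$, so one only has $\#D(\F_p)\le\#C(\F_p)+1$; this is exactly where the paper picks up the extra $+2$ that turns $2\#D(\F_p)+2$ into $2\#C(\F_p)+4$. Your accounting happens to land on the right constant, but for the wrong reason.
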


\begin{proof}
By hypothesis, we have that $Z=Z_1\cup Z_2$ and $\pi(Z_1)=\pi(Z_2)$. Then, $\pi$ defines an isomorphism from $Z_i$ to $Z'=\pi(Z)$. Notice that $\pi_{*}(Z)=2\cdot Z'$. Therefore, there exists an isomorphism $\varphi\circ\pi^{-1}:Z'\to C$, as in \eqref{diagram W2}, which implies that $Z'$ is a non-singular curve, and by Remark \ref{Z' normalization}, it is the normalization of $D$. Since $0_J$ is the unique singular point of $D$, the corresponding $\psi$ in \eqref{diagram W2} is one-to-one outside $Z'\cap \nabla$.

Since $\psi^{*}(w_1\wedge w_2)$ is not of the form of \eqref{divisor constant}, the supports of $w_1$ and $w_2$ are disjoint. Then for every $x\in D(\F_p)\setminus\{0_J\}$ we have $\ord_{\psi^{-1}(x)}(\psi^{\bullet}w_i)=0$ for some $i\in\{1,2\}$. Applying Theorem \ref{ThmOver}, we find that for $x\in D(\F_p)\setminus\{0_J\}$, we have that
$$
m(x)\leq 2(\ord_{\psi^{-1}(x)}(\psi^{\bullet}w_i)+1)=2,
$$
since $\#\psi^{-1}(x)=1$.

Finally, let $x_1,x_2$ denote the points in $Z'\cap \nabla$, which are the preimages of $0_J$ via $\psi$. Since the supports of $w_1$ and $w_2$ are disjoint, there exists $w$ a linear combination of $w_1$ and $w_2$ such that $\ord_{x_j}(\psi^{\bullet}w)=0$. Applying Theorem \ref{ThmOver}, we find that $m(0_J)\leq 4$.

Since $m(x)<p-2$, Observation 
\ref{sharpness m} implies that $\#W_2(\Q_p)\cap\overline{J(\Q)}\cap U_x\leq m(x)+1$ for all $x\in W_2(\F_p)$. Therefore, we have
\begin{align}\label{finalcaseII}
\#W_2(\Q)&\leq \#W_2(\F_p)+2\#D(\F_p)+2\nonumber \\
&\leq \#W_2(\F_p)+2\#C(\F_p)+4.
\end{align}
The additional $2$ in \eqref{finalcaseII} is because $x_i$ could not belong to $Z'(\F_p)$.
\end{proof}
In what we do next, we work with an explicit choice of basis $\{w_1, w_2\}$ satisfying the condition of Proposition \ref{nonzero wedge}:
\begin{definition}Define $w_1$ and $w_2$ as follows:
\begin{align*}
w_1 &=(\beta_{12}x+\beta_{02})\frac{dx}{y}, \\
w_2 &= \begin{cases} (x^2-\beta_{01}/\beta_{12})\frac{dx}{y} &\text{if}\; \beta_{12} \neq 0, \\
(x^2+\beta_{01}x/\beta_{02})\frac{dx}{y} &\text{if}\; \beta_{12} = 0.\end{cases}
\end{align*}
\end{definition}

Case III is the generic situation and requires a separate, more technical analysis of singular and non-singular points. So for the remainder of this section, we assume that we are in Case III, i.e., that $Z=\mathbb{V}(\beta_{01}+\beta_{02}(x_1+x_2)+\beta_{12}x_1x_2)\subset C^{2}$ is irreducible. Let $\nu:\widetilde{Z}\to Z$ be the normalization map. In the following lemma, we compute $\ord_{z}(\nu^{\bullet}w)$, where $w$ is a differential on $C^{2}$ as in Proposition \ref{nonzero wedge}.  After that, we apply Riemann--Hurwitz to obtain the upper bound for $m(x)$. To state the lemma, we introduce some notation. Let us define the polynomial
\begin{equation}
F(x)=f\left(-\frac{\beta_{02}x+\beta_{01}}{\beta_{12}x+\beta_{02}}\right)(\beta_{12}x+\beta_{02})^{8}-(\beta_{02}^2-\beta_{01}\beta_{12})^4f(x).
\end{equation}
For $P=((a_1,b_1),(a_2,b_2))\in (C\setminus\{\infty\})^2$ a non-singular point of $Z$, we define $\delta_P$ as follows:
    \begin{equation}
    \delta_P=\begin{cases}
    1 & \text{if } (\beta_{12}a_2+\beta_{02})^4b_1-(\beta_{02}^2-\beta_{01}\beta_{12})^2b_2=0,\\
    0 & \text{otherwise.}
    \end{cases}
    \end{equation}
For $P\in Z$ a singular point of $Z$, let $t$ be an element in the local ring $\O_{P}$ such that $y_2t=y_1$. For each $z\in \nu^{-1}(P),$ we denote by $\mfrak_{z}$ the maximal ideal associated to $z$. We define $\gamma_P$ as follows:
    \begin{equation}
    \gamma_P=\begin{cases}
    1 & \text{if } t-t(P)\in \mfrak_{z},\\
    0 & \text{otherwise.}
    \end{cases}
    \end{equation}

\begin{lemma}\label{orders differential}
Suppose that $Z=\mathbb{V}(\beta_{01}+\beta_{02}(x_1+x_2)+\beta_{12}x_1x_2)\subset C^{2}$ is irreducible. Let $\nu:\widetilde{Z}\to Z$ be the normalization map. For every $P\in Z$, for every $z\in \nu^{-1}(P)$, and a suitable differential $w\in \Span_{\F_p}\{w_1,w_2\}$, we compute $\ord_{z}(\nu^{\bullet}w)$ as follows:
\small{
\begin{table}[H]
\centering
\begin{tabular}{|ccccc|}
\hline
\multicolumn{5}{|c|}{\textbf{$Z$ is non-singular at $P$}}                                        \\ \hline
\multicolumn{1}{|c|}{{Case}} &\multicolumn{1}{|c|}{{Hypothesis}} & \multicolumn{1}{c|}{$P\in Z\subset  C^2$} & \multicolumn{1}{c|}{$w$} & $\ord_{P}(\nu^{\bullet}w)$ \\ \hline
\multicolumn{1}{|c|}{\textbf{1.1}} &\multicolumn{1}{|c|}{none} & \multicolumn{1}{c|}{$((a_1,b_1),(a_2,b_2))\in (C\setminus\{\infty\})^2$} &  \multicolumn{1}{c|}{$w_1$} & $\delta_P \ord_{a_2}F(x)$ \\ \hline
\multicolumn{1}{|c|}{\textbf{1.2}} &\multicolumn{1}{|c|}{$\beta_{12}= 0$} & \multicolumn{1}{c|}{$((-\frac{\beta_{01}}{2\beta_{02}},b_1),(-\frac{\beta_{01}}{2\beta_{02}},b_2))$} & \multicolumn{1}{c|}{$w_2+aw_1$} & $\ord_{-\frac{\beta_{01}}{2\beta_{02}}}(x^2+\frac{\beta_{01}}{\beta_{02}}x+a\beta_{02})$ \\ \hline
\multicolumn{1}{|c|}{\textbf{1.3}} &\multicolumn{1}{|c|}{$\beta_{12}\neq 0$} & \multicolumn{1}{c|}{$((-\frac{\beta_{02}}{\beta_{12}},b_1),\infty)$, $(\infty,(-\frac{\beta_{02}}{\beta_{12}},b_2))$} & \multicolumn{1}{c|}{$w_2$} & $0$ \\ \hline
\multicolumn{5}{|c|}{\textbf{$Z$ is singular at $P$}}                                            \\ \hline
\multicolumn{1}{|c|}{{Case}} &\multicolumn{1}{|c|}{{Hypothesis}} & \multicolumn{1}{c|}{$P\in Z\subset  C^2$} &  \multicolumn{1}{c|}{$w$} & $\ord_{z}(\nu^{\bullet}w)$ \textbf{for} $z\in \nu^{-1}(P)$ \\ \hline
\multicolumn{1}{|c|}{\textbf{2.1}} &\multicolumn{1}{|c|}{$\beta_{12}= 0$} & \multicolumn{1}{c|}{$(\infty,\infty)$} & \multicolumn{1}{c|}{$w_2+aw_1$} & $0$ \\ \hline
\multicolumn{1}{|c|}{\textbf{2.2}} &\multicolumn{1}{|c|}{$\beta_{12}\neq 0$} & \multicolumn{1}{c|}{$((-\frac{\beta_{02}}{\beta_{12}},0),\infty)$, $(\infty,(-\frac{\beta_{02}}{\beta_{12}},0))$} & \multicolumn{1}{c|}{$\beta_{12}w_2$} & $2\gamma_{z} \ord_{-\frac{\beta_{02}}{\beta_{12}}}(F(x)-1)$ \\ \hline
\multicolumn{1}{|c|}{\textbf{2.3}} &\multicolumn{1}{|c|}{none} & \multicolumn{1}{c|}{$((\xi_1,0),(\xi_2,0))$ for $\xi_1\neq \xi_2$} & \multicolumn{1}{c|}{$w_1$} & $2\gamma_{z}\ord_{\xi_1}(F(x))-1$ \\ \hline
\multicolumn{1}{|c|}{\textbf{2.4}} &\multicolumn{1}{|c|}{$\beta_{12}= 0$} & \multicolumn{1}{c|}{$((\xi,0),(\xi,0))$} & \multicolumn{1}{c|}{$w_2+aw_1$} & $2\ord_{-\frac{\beta_{01}}{2\beta_{02}}}(x^2+\frac{\beta_{01}}{\beta_{02}}x+a\beta_{02})$ \\ \hline
\multicolumn{1}{|c|}{\textbf{2.5}} &\multicolumn{1}{|c|}{$\beta_{12}\neq 0$} & \multicolumn{1}{c|}{$((\xi,0),(\xi,0))$} & \multicolumn{1}{c|}{$w_1$} & $0$ \\ \hline
\end{tabular}
\end{table}}
\end{lemma}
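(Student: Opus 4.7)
The plan is a local analysis at each of the points in the table, exploiting the defining equation of $Z \subset C^2$ and the hyperelliptic structure of $C$. By Lemma \ref{lemma divisor} applied to our explicit basis, the pullback of $w_1$ from $W_2$ to $C^2$ has the shape
\[
(\beta_{12}x_1 + \beta_{02})\frac{dx_1}{y_1} + (\beta_{12}x_2 + \beta_{02})\frac{dx_2}{y_2},
\]
with an analogous formula for $w_2$. Differentiating the defining equation of $Z$ yields the relation
\[
(\beta_{02} + \beta_{12} x_2)\,dx_1 + (\beta_{02} + \beta_{12} x_1)\,dx_2 = 0 \quad \text{on } Z,
\]
which allows us to eliminate one of the $dx_i$. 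When $\beta_{12}\ne 0$ the defining equation itself gives $x_2 = \phi(x_1) := -(\beta_{02}x_1 + \beta_{01})/(\beta_{12}x_1+\beta_{02})$, and combining $y_i^2=f(x_i)$ with the definition of $F$ produces the identity
\[
F(x_1) = \bigl(y_2(\beta_{12}x_1+\beta_{02})^4 - y_1(\beta_{02}^2-\beta_{01}\beta_{12})^2\bigr)\bigl(y_2(\beta_{12}x_1+\beta_{02})^4 + y_1(\beta_{02}^2-\beta_{01}\beta_{12})^2\bigr)
\]
on $Z$, obtained by clearing denominators in $y_2^2 = f(\phi(x_1))$ and factoring a difference of squares.

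For the non-singular cases (1.1--1.3), $\nu$ is a local isomorphism at $P$, so we work directly on $Z$. For Case 1.1 I would substitute the derivative relation into the pullback of $w_1$, combine the two summands over a common denominator, and recognize the numerator as one of the two factors above; the indicator $\delta_P$ tracks which factor vanishes at $P$, so $\ord_P(\nu^\bullet w_1)$ equals $\delta_P\cdot\ord_{a_2}F$. Case 1.2 is a direct substitution at the fixed point $x_1=x_2=-\beta_{01}/(2\beta_{02})$ of the $S_2$-action; using $x_1 + \beta_{01}/(2\beta_{02})$ as a local parameter, the order of $(w_2+aw_1)|_Z$ is read off from the quadratic $x^2+(\beta_{01}/\beta_{02})x+a\beta_{02}$. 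For Case 1.3, a uniformizer at $\infty\in C$ (available because $\deg f$ is odd) gives local coordinates in which $w_2$ has no zero.

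For the singular points (Cases 2.1--2.5), I would first locate the singular locus by computing the Jacobian of the defining equation in each affine chart, including charts around $\infty$: the singularities are precisely the points listed in the table. At each such point the local structure of $Z$ is analytically reducible, with two branches crossing, and the key tool is the rational function $t=y_1/y_2$, which becomes regular on $\widetilde{Z}$; the indicator $\gamma_z$ records which branch has $t-t(P)$ in its maximal ideal. In Cases 2.2 and 2.3 I would take a uniformizer on each branch built from $y_1$ or $y_2$ and combine with the factorization of $F(x)$ above to compute $\nu^\bullet w$; one of the two factors of $F$ vanishes along each branch, producing the $\gamma_z$-dependence and the factor of $2$ arising from $y_i$ behaving as a square root near a Weierstrass point. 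Cases 2.1, 2.4, and 2.5 are handled by similar but simpler computations: in 2.5 the coefficient $(\beta_{12}\xi + \beta_{02})$ of $w_1$ is a unit at $\xi\ne -\beta_{02}/\beta_{12}$; in 2.1 a local parameter at $\infty$ gives an immediate unit; and in 2.4 the diagonal structure reduces the computation to expanding $w_2+aw_1$ in a single variable, with the multiplicity doubling coming from the two analytic branches of $\widetilde{Z}$.

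The main obstacle is the singular case analysis: describing the two analytic branches of $\widetilde{Z}$ at each singular point explicitly and identifying which corresponds to $\gamma_z=1$. Once the local picture of $\widetilde{Z}$ is in place at each singularity, each order computation reduces to a short algebraic manipulation using the factorization of $F(x)$ together with $y_i^2=f(x_i)$.
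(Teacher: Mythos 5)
Your proposal follows essentially the same route as the paper: an explicit local computation at each tabulated point, using the relation $(\beta_{02}+\beta_{12}x_2)\,dx_1+(\beta_{02}+\beta_{12}x_1)\,dx_2=0$ on $Z$ to eliminate one differential, the difference-of-squares factorization $F(x_1)=\bigl(y_2(\beta_{12}x_1+\beta_{02})^4-y_1(\beta_{02}^2-\beta_{01}\beta_{12})^2\bigr)\bigl(y_2(\beta_{12}x_1+\beta_{02})^4+y_1(\beta_{02}^2-\beta_{01}\beta_{12})^2\bigr)$ with $\delta_P$ tracking the vanishing factor, and the element $t=y_1/y_2$ in the integral closure to separate the two branches at singular points with $\gamma_z$ recording the branch. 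The only slip is cosmetic: in Case 2.4 the factor of $2$ comes from $\ord_{y_1}(x_1+\beta_{01}/2\beta_{02})=2$ at the Weierstrass point (as in your Case 2.3 explanation), not from summing over the two branches.
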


\begin{proof}
To begin with, suppose that $P=((a_1,b_1),(a_2,b_2))$ is a non-singular point of $Z$; in particular, $b_1$ or $b_2$ differs from $0$. Without loss of generality, let us assume that $b_1\neq 0$, thus, $y_2-b_2$ is a local parameter at $z$ on $Z$ and $\nu^{\bullet}w_1$ is
\begin{equation*}
2\left(-\frac{(\beta_{02}^2-\beta_{01}\beta_{12})^2y_2}{(\beta_{12}x_2+\beta_{02})^3y_1}+(\beta_{12}x_2+\beta_{02})\right)\frac{dy_2}{f'(x_2)}.
\end{equation*}
Notice that if $b_2=0$, we have that $\ord_{((a_1,b_1),(a_2,b_2))}(\nu^{\bullet}w_1)=0$, since $a_2\neq -\beta_{02}/\beta_{12}$. If $b_2\neq 0$, $x_2-a_2$ is also a local parameter, and we have
\begin{align*}
\ord_{P}(\nu^{\bullet}w_1)&=\ord_{x_2-a_2}((\beta_{12}x_2+\beta_{02})^4y_1-(\beta_{02}^2-\beta_{01}\beta_{12})^2y_2)\nonumber\\
&=\delta_P \ord_{a_2}\left(f\left(-\frac{\beta_{02}x+\beta_{01}}{\beta_{12}x+\beta_{02}}\right)(\beta_{12}x+\beta_{02})^{8}-(\beta_{02}^2-\beta_{01}\beta_{12})^4f(x)\right)\nonumber\\
&=\delta_P \ord_{a_2}F(x),
\end{align*}
which proves Case 1.1. 
Let us now assume that $\beta_{12}=0$ ($\beta_{02}\neq 0$) and $-\beta_{01}/2\beta_{02}$ is not a root of $f$. Notice that $Z$ is non-singular at the points 
\begin{equation*}
((-\beta_{01}/2\beta_{02},\pm\sqrt{f(-\beta_{01}/2\beta_{02})}),(-\beta_{01}/2\beta_{02},\mp\sqrt{f(-\beta_{01}/2\beta_{02})})).    
\end{equation*}
We work separately in this case since they belong to the exceptional divisor $\nabla$.
In this case, $x_1+\beta_{01}/2\beta_{02}$ is a local parameter, and we have
\begin{equation*}
\nu^{\bullet}\left(w_2+aw_1\right)=2\left(x_1^2+\frac{\beta_{01}}{\beta_{02}}x_1+a\beta_{02}\right)(y_1-y_2)\frac{dx_1}{y_2y_1}.
\end{equation*}
Consequently, $\ord_{z}\nu^{\bullet}(w_2+aw_1)=\ord_{-\beta_{01}/2\beta_{02}}(x^2+(\beta_{01}/\beta_{02})x+a\beta_{02})$ proving Case 1.2.

Finally, if $\beta_{12}\neq 0$ and $-\beta_{02}/\beta_{12}$ is not a root of $f$, 
\begin{equation*}
((-\beta_{02}/\beta_{12},\pm\sqrt{f(-\beta_{02}/\beta_{12})}),\infty)\text{ and }(\infty,(-\beta_{02}/\beta_{12},\pm\sqrt{f(-\beta_{02}/\beta_{12})}))    
\end{equation*}
are non-singular points of $Z$. After the change of variables $(x_2,y_2)\mapsto (1/x_2,y_2/x_2^4)$ as in \cite[Exercise II.2.14]{SilvermanArithmetic}, $y_2$ is a local parameter and $\nu^{\bullet}w_2$ is 
\begin{equation*}
\left(-\frac{(x_1^2-\beta_{01}/\beta_{12})(\beta_{02}x_1+\beta_{01})y_2}{(\beta_{02}x_2+\beta_{12})y_1} +1-\frac{\beta_{01}x_2^2}{\beta_{12}}\right)\frac{2dy_2}{f_{*}'(x_2)}.
\end{equation*}
We use that $dx_1=-dx_2(\beta_{02}x_1+\beta_{01})/(\beta_{02}x_2+\beta_{12})$. Setting $x_1=-\beta_{02}/\beta_{12}$, $x_2=0$, and $y_2=0$, we obtain that 
\begin{equation*}
\ord_{((-\beta_{02}/\beta_{12},\pm\sqrt{f(-\beta_{02}/\beta_{12})}),\infty)}(\nu^{\bullet}w_2)=0,    
\end{equation*}
which proves Case 1.3 and then the lemma for non-singular points of $Z$.

Now, we compute the orders of the singular points of $Z$. For the sake of clarity, we outline the procedure for computing $\ord_{z}\nu^{\bullet}(w_2+(a/\beta_{02})w_1)$ for $z\in \nu^{-1}(\infty,\infty)$, where $a\in\F_p$. (For a more extensive discussion on differentials on singular curves, we refer the reader to \cite[Chapter IV]{SerreAlgebraic}.)

The change of variables $(x,y)\mapsto (1/x,y/x^4)$ turns $\nu^{\bullet}(w_2+(a/\beta_{02})w_1)$ into
\begin{equation}\label{dif}
\left(1+\frac{\beta_{01}x_1}{\beta_{02}}+a\beta_{02}x_1^2\right)\frac{-2dy_1}{f_{*}'(x_1)}+\left(1+\frac{\beta_{01}x_2}{\beta_{02}}+a\beta_{02}x_2^2\right)\frac{-2dy_2}{f_{*}'(x_2)},
\end{equation}
where $f_{*}(x)=x^8f(1/x)$. Let $g(x)\in \F_p[x]$ be the polynomial satisfying $f_{*}(x)=xg(x)$. With this change of variables, the equation defining $Z$ becomes $\beta_{02}(x_1+x_2)+\beta_{01}x_1x_2$, in particular, $x_2=-\beta_{02}x_1/(\beta_{01}x_1+\beta_{02})$. Let $\widetilde{\O}_{(\infty, \infty)}$ denote the integral closure of $\O_{(\infty, \infty)}$ in the function field $K(Z)$. Note that $t\in\widetilde{\O}_{(\infty, \infty)}$, where $t$ is defined by 
\begin{equation*}
t^2=\frac{-(\beta_{01}x_1+\beta_{02})g(x_1)}{\beta_{02}g(x_2)}.
\end{equation*}
Since $\#\nu^{-1}(\infty,\infty)=2$, we have that $\widetilde{\O}_{(\infty, \infty)}$ has two maximal ideals $\mfrak_1$ and $\mfrak_2$, above $\mfrak_{(\infty,\infty)}$, containing  $\mfrak_{(\infty,\infty)}+(t+\sqrt{-1})$  and $\mfrak_{(\infty,\infty)}+(t-\sqrt{-1})$, respectively. Since $y_1t=y_2$ we have that $y_1$ is a local parameter in each localization by $\mfrak_i$. Note that we have
\begin{equation*}
dy_2=-\frac{(\beta_{01}x_2+\beta_{02})tf_{*}'(x_2)}{(\beta_{01}x_1+\beta_{02})f_{*}'(x_1)}dy_1.
\end{equation*}
Therefore, \eqref{dif} turns into
\begin{equation*}
\left(1+\frac{\beta_{01}x_1}{\beta_{02}}+a\beta_{02}x_1^2 -\left(1+\frac{\beta_{01}x_2}{\beta_{02}}+a\beta_{02}x_2^2\right)\frac{\beta_{01}x_2+\beta_{02}}{\beta_{01}x_1+\beta_{02}}t\right)\frac{-2dy_1}{f_{*}'(x_1)}.
\end{equation*}
Setting $x_1=x_2=0$ and $t=\pm\sqrt{-1},$ we get that $\ord_{z}\nu^{\bullet}(w_2+aw_1)=0$ for every $z\in\nu^{-1}(\infty,\infty)$ proving Case 2.1.

Now, let us assume that $\beta_{12}\neq 0$ and $-\beta_{02}/\beta_{12}$ is a root of $f$. In this case, $((-\beta_{02}/\beta_{12},0),\infty)$ and $(\infty,(-\beta_{02}/\beta_{12},0)$ are singular points of $Z$, and we can write $f(x)=(x+\beta_{02}/\beta_{12})h(x)$ for some polynomial $h(x)\in\F_p[x]$. The change of variables given in \cite[Exercise II.2.14]{SilvermanArithmetic} turns $\nu^{\bullet}(\beta_{12}w_2)$ into
\begin{equation*}
((\beta_{02}x_1+\beta_{01})^4-(\beta_{02}^2-\beta_{01}\beta_{12})^2t)\frac{-2dy_2((\beta_{12}x_1^2-\beta_{01})}{f'(x_1)(\beta_{02}x_1+\beta_{01})^4},
\end{equation*}
where $t$ satisfies
\begin{equation*}
t^2=\frac{f(1/x_2)x_2^7}{(\beta_{02}x_1+\beta_{01})h(x_1)}.
\end{equation*}
We define $\gamma_z=1$ if $t-(\beta_{02}^2-\beta_{01}\beta_{12})^2/\beta_{12}^4\in\mfrak_{z}$ and $\gamma_z=0$ otherwise. 

Since $\ord_{y_1}(x_1+\beta_{02}/\beta_{12})=2$, we have that $\ord_{z}(\nu^{\bullet}(\beta_{12}w_2))$ is equal to
\begin{equation*}
2\gamma_z\ord_{x-\frac{\beta_{02}}{\beta_{12}}}\left(f\left(-\frac{\beta_{02}x+\beta_{01}}{\beta_{12}x+\beta_{02}}\right)(\beta_{12}x+\beta_{02})^{7}(\beta_{02}^2-\beta_{01}\beta_{12})^4-(\beta_{02}x_1+\beta_{01})^8h(x)\right).
\end{equation*}
Consequently, we have
\begin{equation*}
\ord_{z}(\nu^{\bullet}(\beta_{12}w_2))=2\gamma_z \ord_{-\frac{\beta_{02}}{\beta_{12}}}(F'(x))=2\gamma \ord_{-\frac{\beta_{02}}{\beta_{12}}}(F(x)-1),
\end{equation*}
for $z\in \nu^{-1}((-\beta_{02}/\beta_{12},0),\infty)$. This yields Case 2.2 of the lemma.

To prove Cases 2.3, 2.4, and 2.5, we assume that $\xi_1$ and $\xi_2$ are roots of $f$ satisfying 
$$
\xi_1=-(\beta_{02}\xi_2-\beta_{01})/(\beta_{12}\xi_2+\beta_{02}).
$$

First, we assume that $\xi_1\neq\xi_2$. In this case, we have $f(x)=(x-\xi_1)(x-\xi_2)q(x)$ for some $q\in \F_p[x]$ and
\begin{equation*}
\nu^{\bullet}w_1=(-(\beta_{02}^2-\beta_{01}\beta_{12})^2t+(\beta_{12}x_2+\beta_{02})^4)\frac{2dy_2}{f'(x_2)(\beta_{12}x_1+\beta_{02})^3},
\end{equation*}
where $t$ satisfies
\begin{equation*}
t^2=\frac{(\beta_{12}\xi_1+\beta_{02})(\beta_{12}\xi_2+\beta_{02})(\beta_{12}x_2+\beta_{02})^2 q(x_2)}{(\beta_{02}^2-\beta_{01}\beta_{12})^2q(x_1)}.
\end{equation*}
Since $\ord_{y_2}(x_2-\xi_2)=2$, for $z\in \nu^{-1}((\xi_1,0),(\xi_2,0))$ we have that $\ord_{z}(\nu^{\bullet}w_1)$ is equal to
\begin{equation*}
2\gamma_z\ord_{\xi_2}\left(q\left(-\frac{\beta_{02}x+\beta_{01}}{\beta_{12}x+\beta_{02}}\right)(\beta_{12}x+\beta_{02})^{6}-(\beta_{02}^2-\beta_{01}\beta_{12})^2(\beta_{12}\xi_1+\beta_{02})(\beta_{12}\xi_2+\beta_{02})q(x)\right),
\end{equation*}
where $\gamma_z=1$ if $t+(\beta_{12}\xi_2+\beta_{02})^4/(\beta_{02}^2-\beta_{01}\beta_{12})^2\in\mfrak_{z}$ and $\gamma_z=0$ otherwise. Therefore, we have
\begin{equation*}
\ord_{z}(\nu^{\bullet}w_1)=2\gamma_z \ord_{\xi_2}(F(x)/(x-\xi_1)(x-\xi_2)=2\gamma_z \ord_{\xi_2}(F(x))-1.
\end{equation*}
This yields Case 2.3 of the lemma.

For the remaining two cases, we assume that $\xi_1=\xi_2$. 

\noindent When $\beta_{12}=0$ (with $\beta_{02}\neq 0$), $-\beta_{01}/2\beta_{02}$ is a root of $f$. Therefore, we have that $f(x)=(x+\beta_{01}/2\beta_{02})g(x)$ for some $g(x)\in\F_p[x]$. In this case, $\ord_{y_1}(x_1+\beta_{01}/2\beta_{02})=2$ and $t$ satisfies $t^2=-g(x_1)/g(x_2)$, implying that
\begin{align*}
\ord_{z}\nu^{\bullet}(w_2+aw_1)&= 2\ord_{-\beta_{01}/2\beta_{02}}(x_1^2+\frac{\beta_{01}}{\beta_{02}}x_1+a\beta_{02})(g(x_1)+g(-x_1-\beta_{01}))\nonumber\\
&= 2 \ord_{-\beta_{01}/2\beta_{02}}(x_1^2+\frac{\beta_{01}}{\beta_{02}}x_1+a\beta_{02}),
\end{align*}
for $z\in \nu^{-1}((-\beta_{01}/2\beta_{02},0),(-\beta_{01}/2\beta_{02},0))$, proving Case 2.4.

Finally, when $\beta_{12}\neq0$, we have that $\xi$ is a common root of $\beta_{12}x^2+2\beta_{02}x+\beta_{01}$ and $f$. In this case, we have $f(x)=(x-\xi)q(x)$ for some $q(x)\in \F_q[x]$ and $\nu^{\bullet}w_1$ is
\begin{equation*}
(-(\beta_{02}^2-\beta_{01}\beta_{12})^2t+(\beta_{12}x_2+\beta_{02})^4)\frac{2dy_2}{f'(x_2)(\beta_{12}x_1+\beta_{02})^3},    
\end{equation*}
where $t$ satisfies
\begin{equation}\label{diagonal 2}
t^2=-\frac{(\beta_{02}^2-\beta_{01}\beta_{12})q(x_2)}{(\beta_{12}\xi+\beta_{02})(\beta_{12}x_2+\beta_{02})q(x_1)}.    
\end{equation}
Note that $\beta_{02}^2-\beta_{01}\beta_{12}=(\beta_{12}\xi+\beta_{02})^2$. Let $\widetilde{O}_P$ denote the integral closure of $O_P$.  Replacing $x_1$ and $x_2$ by $\xi$ in \eqref{diagonal 2}, we get that $t=\pm\sqrt{-1}$, which implies that $(-(\beta_{02}^2-\beta_{01}\beta_{12})^2t+(\beta_{12}x_2+\beta_{02})^4)$ is a unit in the localization of $\widetilde{O}_P$ at $z$, for any $z\in\nu^{-1}((\xi,0),(\xi,0))$. 
As a consequence, we have that
$\ord_{z}(\nu^{\bullet}w_1)=0$, which yields Case 2.5 and thus the lemma. 
\end{proof}

Let $S$ be the set of roots $\xi$ of $f$ such that $-(\beta_{02}\xi+\beta_{01})/(\beta_{12}\xi+\beta_{02})$ is also a root of $f$. We consider the polynomial $G(x)$ defined by 
\begin{equation}\label{def G}
F(x)=\prod_{\xi\in S}(x-\xi)G(x).
\end{equation}
For $\xi$ a root of $\beta_{12}x^2+2\beta_{02}x+\beta_{01}$, we define
\begin{equation}
\gamma_\xi=\begin{cases}
1 & \text{if $f(\xi)\neq 0$ and $(\zeta,\sqrt{f(\zeta)})\in C(\Q)$},\\
0 & \text{otherwise.}
\end{cases}
\end{equation}
If $\beta_{12}=0$, we define 
\begin{equation}    N=\sum\ord_{a_1}G(x)+\frac{1}{2}\gamma_{-\frac{\beta_{02}}{2\beta_{01}}}(\ord_{\frac{-\beta_{02}}{2\beta_{01}}}G(x)-1),
\end{equation}
where the sum runs over the points $[(a_1,b_1),(a_2,b_2)]$ in $ D(\F_p)\setminus(\Delta\cup\{0_J\})$ such that the equality $(\beta_{02})^4b_1-(\beta_{02}^2-\beta_{01})^2b_2=0$ holds.

If $\beta_{12}\neq0$, let us define
\begin{equation}
\eta=\begin{cases}
1 & \text{if $-\beta_{02}/\beta_{12}$ is a root of $f$},\\
0 & \text{otherwise,}
\end{cases}
\end{equation}
and
    \begin{equation}
    N=\sum\ord_{a_1}G(x)+\eta( \ord_{-\frac{\beta_{02}}{\beta_{12}}}G(x)-1)+\frac{1}{2}\left(\sum_{\xi}\gamma_{\xi}(\ord_{\xi}G(x)-1)\right),
    \end{equation}
    where the first sum runs over $[(a_1,b_1),(a_2,b_2)]\in D(\F_p)\setminus(\Delta\cup\{0_J\})$ satisfying $(\beta_{12}a_2+\beta_{02})^4b_1-(\beta_{02}^2-\beta_{01}\beta_{12})^2b_2=0$ and the second sum runs over the roots of $\beta_{12}x^2+2\beta_{02}x+\beta_{01}$.

Now using this lemma, we may prove the following:
\begin{proposition}[Case III]\label{bound D irreducible}
Suppose that $Z=\mathbb{V}(\beta_{01}+\beta_{02}(x_1+x_2)+\beta_{12}x_1x_2)\subset C^{2}$ is irreducible. Then, for every $x\in W_2(\F_p)$ we have $m(x)\leq 6$. Consequently, we can apply Theorem \ref{ThmOver} with any prime of good reduction for $C$ such that $p\geq 11$, and we obtain the following upper bound for $W_2(\Q)$:
\begin{equation}
\#W_2(\Q)\leq \#W_2(\F_p)+\#D(\F_p)+\#\Sing(D)(\F_p)+N.
\end{equation}
\end{proposition}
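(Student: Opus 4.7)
The plan is to apply Theorem \ref{ThmOver} residue disk by residue disk and translate the resulting orders of vanishing back to $\widetilde{Z}$ using Lemma \ref{orders differential}. Since $Z$ is irreducible by hypothesis, Lemma \ref{Divisor D}(ii) gives $D=\psi_*\pi_*Z$ as an irreducible Weil divisor on $W_2$ with $a_1=1$; by Remark \ref{Z' normalization} the normalization of $D$ factors through $\widetilde{Z}$ via $\widetilde{\psi}=\psi\circ\pi\circ\nu$, so the right-hand side of \eqref{EqnOver} becomes $\sum_{y\in\widetilde{\psi}^{-1}(x)}\bigl(\ord_y(\widetilde{\psi}^\bullet w_0)+1\bigr)$ for a chosen $w_0\in\Span_{\F_p}\{w_1,w_2\}$. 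I partition $W_2(\F_p)$ into the three sets $W_2(\F_p)\setminus D(\F_p)$, $D(\F_p)\setminus\Sing(D)(\F_p)$, and $\Sing(D)(\F_p)=\{0_J\}\cap D(\F_p)$, bound $m(x)$ on each, and sum using Observation \ref{sharpness m}.

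For $x\notin D(\F_p)$ the 2-form $w_1\wedge w_2$ is nonvanishing at $x$, so one of $w_1,w_2$ may be chosen as $w_0$ to force $m(x)=0$; each such residue disk contributes $1$ point to the bound. For $x\in D(\F_p)\setminus\{0_J\}$ the curve $D$ is smooth at $x$ (its only singular point lies above the blow-down center $\nabla$), so $\widetilde{\psi}^{-1}(x)$ is a single point. Here I apply Cases 1.1, 1.2, 1.3 of Lemma \ref{orders differential} with the $w_0$ indicated there: Case 1.1 yields $\ord_y(\widetilde{\psi}^\bullet w_0)=\delta_P\ord_{a_2}F(x)$ (contributing to the first sum in $N$), Case 1.2 (with $\beta_{12}=0$) yields $\ord_{-\beta_{01}/(2\beta_{02})}(x^2+(\beta_{01}/\beta_{02})x+a\beta_{02})\leq 2$ after choosing $a$, and Case 1.3 yields $0$. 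In every subcase $m(x)\leq\ord_y(\widetilde{\psi}^\bullet w_0)+1$, accounting for one $+1$ per such point of $D(\F_p)$ plus the corresponding piece of $N$.

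For $x=0_J$, the preimages in $\widetilde{Z}$ correspond to the singular points of $Z$, namely $(\infty,\infty)$, the points $((-\beta_{02}/\beta_{12},0),\infty)$ and its symmetric partner, or points $((\xi_1,0),(\xi_2,0))$ arising from paired roots of $f$. Applying Cases 2.1--2.5 of Lemma \ref{orders differential} with $w_0$ as prescribed and summing $\ord_z(\nu^\bullet w_0)+1$ over all $z\in\widetilde{\psi}^{-1}(0_J)$ yields exactly the $\eta(\ord_{-\beta_{02}/\beta_{12}}G(x)-1)$ and $\tfrac{1}{2}\sum_\xi\gamma_\xi(\ord_\xi G(x)-1)$ pieces of $N$, where the factor of two in Cases 2.2--2.4 is absorbed because the normalization has two preimages but $\gamma_z=1$ only on one branch of $t^2=\cdots$, and the $-1$ terms appear because $F$ vanishes to order one automatically at the roots $\xi\in S$ (see \eqref{def G}). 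Collecting, this contributes the extra $\#\Sing(D)(\F_p)$ and the remaining $N$-terms. Adding all three cases and invoking Observation \ref{sharpness m} gives
\begin{equation*}
\#W_2(\Q)\leq\sum_{x\in W_2(\F_p)}(m(x)+1)\leq\#W_2(\F_p)+\#D(\F_p)+\#\Sing(D)(\F_p)+N,
\end{equation*}
provided $m(x)\leq p-3$.

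The main obstacle is twofold. First, one must verify the uniform bound $m(x)\leq 6$ in every case of Lemma \ref{orders differential}; this is where the hypothesis $p\geq 11$ enters, since $p$ prime and $m(x)\leq 6\leq p-3$ is exactly what lets Observation \ref{sharpness m} yield $m(x)+1$ rather than $m(x)+2$. Second, the bookkeeping at $0_J$ is delicate: the 2-to-1 maps in the proofs of Cases 2.2--2.4 produce two branches in the normalization, and the indicator $\gamma_z$ distinguishes them, so the contribution of each ramified singular point must be weighted by $1/2$ in the definition of $N$. Getting these multiplicities right—in particular ensuring that the $\eta$ term (associated with $-\beta_{02}/\beta_{12}$ being a root of $f$) is counted once while the $\gamma_\xi$ terms are counted with the half-weight—is the key technical point.
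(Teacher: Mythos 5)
Your overall strategy (apply Theorem \ref{ThmOver} disk by disk, translate orders of vanishing to $\widetilde{Z}$ via Lemma \ref{orders differential}, and account for the extra $+1$'s coming from multiple preimages under the normalization) is the same as the paper's. But there is a genuine error in your partition of $W_2(\F_p)$: you assert that $\Sing(D)(\F_p)=\{0_J\}\cap D(\F_p)$, that $D$ is smooth away from $0_J$, and that the singular points $((-\beta_{02}/\beta_{12},0),\infty)$ and $((\xi_1,0),(\xi_2,0))$ (with $\xi_1\neq\xi_2$ paired roots of $f$) lie over $0_J$. This is false. The fiber of $\psi\circ\pi$ over $0_J$ is the anti-diagonal $\{(P,\iota(P))\}$, so the points of $Z$ over $0_J$ are exactly those with $\beta_{12}a^2+2\beta_{02}a+\beta_{01}=0$ on the anti-diagonal (Cases 1.2, 2.1, 2.4, 2.5 of Lemma \ref{orders differential}), and the paper shows $m(0_J)\le 2$ there. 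The singular points of Cases 2.2 and 2.3 map instead to the nonzero classes $[(-\beta_{02}/\beta_{12},0)-(\infty)]$ and $[(\xi_1,0)+(\xi_2,0)-2(\infty)]$; these are singular points of $D$ \emph{away} from $0_J$, they are where $\widetilde{\psi}$ is two-to-one, they are what the term $\#\Sing(D)(\F_p)$ actually counts, and they are where the maximum $m(x)\le 6$ is attained (Case 2.2 gives $m\le \ord_{-\beta_{02}/\beta_{12}}(F(x)-1)+2\le 6$, which is precisely why $p\ge 11$ rather than $p\ge 7$ is required for Observation \ref{sharpness m}).

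Concretely, two things break. First, by treating every $x\in D(\F_p)\setminus\{0_J\}$ as a smooth point with a single preimage, you lose the second $+1$ in \eqref{EqnOver} at the Case 2.2/2.3 points, so your bound at those disks is not justified and your claimed uniform bound $m(x)\le 6$ is not actually derived where it matters. Second, your analysis of the $0_J$ disk sums contributions from points that are not in $\widetilde{\psi}^{-1}(0_J)$, so the claimed match with the $\eta$ and $\gamma_\xi$ terms of $N$ does not go through as written. The fix is the paper's accounting: $\sum_x m(x)\le \sum_{x\in D(\F_p)}\#\widetilde{\psi}^{-1}(x)+\sum_x\sum_{y\in\widetilde{\psi}^{-1}(x)}\ord_y(\widetilde{\psi}^\bullet w_x)\le \#D(\F_p)+\#\Sing(D)(\F_p)+N$, with the singular locus of $D$ consisting of $0_J$ together with the images of the Case 2.2/2.3 points, each treated by the corresponding rows of Lemma \ref{orders differential}.
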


\begin{proof} 
By Theorem \ref{ThmOver}, if $z\in W_2\setminus D$, we have that $m(z)=0$. First of all, let us assume that $z\in D$ is a non-singular point and $z\notin \{0_J,[(-\beta_{02}/\beta_{12},f(-\beta_{02}/\beta_{12})),\infty)]\}$.
Then, there are $a_i,b_i\in \F_p^{alg}$ such that $z=\psi([(a_1,b_1),(a_2,b_2)])$ with at least one $b_i\neq0$. 

Suppose that a root $\xi$ of $\beta_{12}x^2+2\beta_{02}x+\beta_{01}$ is not a root of $f$. Let $$
z=\nu'^{-1}\left([(\xi,\pm\sqrt{f(\xi)}),(\xi,\pm\sqrt{f(\xi)})]\right),
$$ 
then applying Riemann--Hurwitz and using Case 1.1 of Lemma \ref{orders differential}, we have
\begin{equation}
m(z)\leq \ord_{z}\nu'^{\bullet}(w_1)+1=\frac{1}{2}(\ord_{(\xi,\pm\sqrt{f(\xi)}),(\xi,\pm\sqrt{f(\xi)})}(\nu^{\bullet}(w_1))-1)+1\leq 4.  
\end{equation}
Let us now assume that $a_1\neq a_2$. Then by Case 1.1 of Lemma \ref{orders differential} we have
\begin{equation*}
m(z)\leq \ord_{[(a_1,b_1),(a_2,b_2)]}(\nu'^{\bullet}w_a)=\delta_z\ord_{a_1}(F(x))+1.
\end{equation*}
Since $\ord_{a_1}(F(x))=\ord_{a_2}(F(x))$ we have that $\ord_{a_1}F(x)\le 3$, in particular, $m(z)\leq 4$.

Now, we find an upper bound $m(0_J)$. 
First, we assume that $\beta_{12}=0$ and notice that
\begin{equation*}
\psi^{-1}(0_J)=\left\{[(\infty,\infty)],\left[(-\beta_{01}/2\beta_{02},\sqrt{f(-\beta_{01}/2\beta_{02}}),(-\beta_{01}/2\beta_{02},-\sqrt{f(-\beta_{01}/2\beta_{02}})\right]\right\},
\end{equation*}
which implies that
\begin{equation*}
m(0_J)\leq \ord_{z_1}\nu'^{\bullet}(w_1+aw_2)+\ord_{z_2}\nu'^{\bullet}(w_1+aw_2)+2,
\end{equation*}
where $z_1$ and $z_2$ are the preimages via $\nu'$ of $\psi^{-1}(0_J)$. Let us fix $a\in\F_p$ such that $-\beta_{01}/2\beta_{02}$ is not a root of $x^2+(\beta_{01}/\beta_{02})x+a\beta_{02}$. If $f(-\beta_{01}/2\beta_{02})\neq 0$, we have $\delta_P=0$ in Case 1.1. On the other hand, if $f(-\beta_{01}/2\beta_{02})= 0$, then Case 2.4 of Lemma \ref{orders differential} implies that
\begin{equation*}
\ord_{z}\nu'^ {\bullet}\left(w_2+\frac{a}{\beta_{02}}w_1\right)=\ord_{-\frac{\beta_{01}}{2\beta_{02}}}\left(x_1^2+\frac{\beta_{01}}{\beta_{02}}x_1+a\beta_{02}\right)=0,    
\end{equation*}
for $z= \nu'^{-1}([(-\beta_{01}/2\beta_{02},0),(-\beta_{01}/2\beta_{02},0)])$. In addition, Case 2.1 of Lemma \ref{orders differential} implies
that $\ord_{z}\nu'^{\bullet}(w_2+(a/\beta_{02})w_1)=0$,
for $z= \nu'^{-1}([\infty,\infty])$. Consequently, in either case, we have that
$m(0_J)\leq 2$.

Now, suppose that $\beta_{12}\neq 0$ and let $\xi_1$ and $\xi_2$ be the two roots of $\beta_{12}x^2+2\beta_{02}x+\beta_{01}$. Since $\psi^{*}(w_1\wedge w_2)$ is not of the form \eqref{divisor constant}, we have $\xi_1\neq\xi_2$. Then we have
\begin{equation*}
\psi^{-1}(0_J)=\left\{[(\xi_1,\sqrt{f(\xi_1)}),(\xi_1,-\sqrt{f(\xi_1)})],[(\xi_2,\sqrt{f(\xi_2)}),(\xi_2,-\sqrt{f(\xi_2)})]\right\},
\end{equation*}
which implies that
\begin{equation*}
m(0_J)\leq \ord_{z_1}\nu'^{\bullet}(w_1)+\ord_{z_2}\nu'^{\bullet}(w_1)+2,
\end{equation*}
where $z_1$ and $z_2$ are the preimages via $\nu'$ of $\psi^{-1}(0_J)$. If $f(\xi_i)\neq 0$, we have that $\delta_P=0$ in Case 1.1. On the other hand, if $f(\xi_i)= 0$, we proved in Case 2.5 that 
$\ord_{z_1}\nu^{\bullet}(w_1)=0$. In either case, we have $m(0_J)\leq 2$.

Finally, let $x\in D$ be a singular point with $x\neq 0_J$. We assume we are in Case 2.2 of Lemma \ref{orders differential}. Let us denote by $z_1$ and $z_2$ the preimages of $[(-\beta_{02}/\beta_{12},0),\infty)]$ via $\nu'$. Therefore we have
\begin{align*}
m([(-\beta_{02}/\beta_{12},0),\infty)])\leq& \ord_{z_1}(\nu'^{\bullet}(\beta_{12}w_2))+\ord_{z_2}(\nu'^{\bullet}(\beta_{12}w_2))+2\nonumber\\
=&\ord_{-\frac{\beta_{02}}{\beta_{12}}}(F(x)-1)+2.
\end{align*}
In this particular case, $\deg(F)=7$ and due to the fact that $\beta_{12}x^2+2\beta_{02}x+\beta_{01}$ divides $F$, we have that $\ord_{-\frac{\beta_{02}}{\beta_{12}}}(F(x)-1)\leq 4$, from which it follows that $m(z)\leq 6$.

The remaining scenario is Case 2.3: here there are two different roots $\xi_1$ and $\xi_2$ of $f$ such that $\xi_1=-(\beta_{02}\xi_2-\beta_{01})/(\beta_{12}\xi_2+\beta_{02})$. Let us denote by $z_1$ and $z_2$ the preimages of $[(\xi_1,0),(\xi_2,0)]$ via $\nu'$. Then by Lemma \ref{orders differential}, we have
\begin{align*}
m([(\xi_1,0),(\xi_2,0)])&\leq \ord_{y_1}(\nu'^{\bullet}w_1)+\ord_{y_2}(\nu'^{\bullet}w_1)+2\nonumber\\
&=\ord_{\xi_1}(F(x)-1)+2. 
\end{align*}
Since $\ord_{\xi_1}(F(x))=\ord_{\xi_2}(F(x))$ and $\beta_{12}x^2+2\beta_{02}x+\beta_{01}$ divides $F$, we have that $\ord_{\xi_1}(F(x)-1)\leq 2$, so $m(z)\leq 4$.

We have just proved that $m(x)\leq 6$ for every $x\in D(\F_p)$. Therefore we can apply Proposition 9.14 in \cite{CaroPasten2021} using any prime $p\geq 7$. Additionally, if $p\geq 11$ by Observation \ref{sharpness m} we have that $ \#W_2(\Q_p)\cap\overline{J(\Q)}\cap U_x\leq m(x)+1$. 

Consequently, we have that $\#W_2(\Q)\leq \#W_2(\F_p)+\sum_{z\in W_2(\F_p)}m(z)$. Since $\#\widetilde{\psi}^{-1}(z)\leq 2$ for every $z\in W_2$, we have that
\begin{equation}\label{quadratic points}
\#W_2(\Q)\leq \#W_2(\F_p)+\#D(\F_p)+\#\Sing(D)(\F_p)+\sum_{z\in W_2(\F_p)}\sum_{y\in \widetilde{\psi}^{-1}(z)}\ord_{y}(\widetilde{\psi}^{\bullet}(w_z)),
\end{equation}
where $w_z$ is a linear combination of $w_1$ and $w_2$ that minimizes the quantity 
$$
\sum_{y\in \widetilde{\psi}^{-1}(z)}\ord_{y}(\widetilde{\psi}^{\bullet}(w)).
$$
With the above computations, one can check that 
\begin{equation*}
\sum_{x\in W_2(\F_p)}\sum_{y\in \widetilde{\psi}^{-1}}\ord_{y}(\widetilde{\psi}^{\bullet}(w_x))\leq N,
\end{equation*}
which applied to \eqref{quadratic points} yields the desired result.
\end{proof}

To conclude this section, we prove Theorem \ref{ThmMain}:

\begin{proof}[Proof of Theorem \ref{ThmMain}] First of all, suppose that we are in Case I or Case II. By Propositions \ref{bound p case 1} and \ref{bound p case 2} we have that
\begin{equation*}
\#W_2(\Q)\leq \#W_2(\F_p)+2\# C(\F_p)+4.
\end{equation*}
The Hasse--Weil bound for an absolutely irreducible smooth projective curve $X$ defined over $\F_p$ with genus $g(X)$ states that 
\begin{equation}\label{Hasse-Weil}
\#X(\F_p)\leq p+1+ 2g(X)\sqrt{p}.
\end{equation}
Consequently, we have that  
\begin{equation*}
\#W_2(\Q)\leq \#W_2(\F_p)+2(p+1+ 2g(C)\sqrt{p})+4=\#W_2(\F_p)+2p+ 12\sqrt{p}+6.
\end{equation*}
Since the reduction of $W_2$ does not contain elliptic curves over $\F_p^{\rm alg}$, Lemmas \ref{Z reducible} and \ref{D reducible} imply that the remaining scenario is Case III. By Remark \ref{Z' normalization}, we have that $\widetilde{Z'}$ is the normalization of $D$. By Proposition \ref{bound D irreducible}, we have that
\begin{equation*}
\#W_2(\Q)\leq \#W_2(\F_p)+\#D(\F_p)+\#\Sing(D)(\F_p)+ N.
\end{equation*}
Applying Riemann--Hurwitz to $\widetilde{\varphi}:\widetilde{Z}\to C$ and $\widetilde{\pi}:\widetilde{Z}\to \widetilde{Z'}$, we obtain that 
\begin{equation*}
g(\widetilde{Z})= 13-\#\Sing(Z),   
\end{equation*}
and since  $\widetilde{\pi}$ ramifies whenever $Z$ is non-singular, we have that 
\begin{equation*}
g(\widetilde{Z'})\leq 7-\#\Sing(Z)/2 -\epsilon,    
\end{equation*}
where $\epsilon=1$ when $Z$ is non-singular and $\epsilon=0$ otherwise. In particular, $g(\widetilde{Z'})\leq 6$. Consequently, the Hasse--Weil bound for singular curves \cite{Aubry} implies that 
\begin{equation*}
\#D(\F_p)\leq p+12\sqrt{p}+1.
\end{equation*}
By the definition of $N$ and $D$, we have that $N\leq 8$, $\#\Sing(D)(\F_p)\leq 5$, and
\begin{align*}
\#W_2(\Q)&\leq \#W_2(\F_p)+p+12\sqrt{p}+1+\#\Sing(D)(\F_p)+N\nonumber\\
&\leq\#W_2(\F_p)+p+12\sqrt{p}+14.
\end{align*}
Since $p\geq 11$, we obtain the desired result.
\end{proof}

\begin{remark}
Although Theorem \ref{ThmMain} is stated for $p\geq 11$, we can apply this method whenever $p>m(x)$ for every $x\in W_2(\F_p)$. We have chosen this lower bound for 
$p$ to obtain a more uniform result. In the examples, we will use $p=5$ and $p=7$.
\end{remark}


\section{The Algorithm} 
Here we put together the results of the previous sections and present the algorithm for computing an upper bound on $\#W_2(\Q)$.
\smallskip

\begin{algorithm}[Algorithm for an upper bound on $\#W_2(\Q)$]\label{mainalg} \quad\\
\textbf{Input:} 
\begin{itemize}\item $C$: a genus 3 hyperelliptic curve over $\Q$ given by an odd degree model with Jacobian rank $1$.
\item $p$: a prime at least 5 of good reduction for $C$.
\end{itemize}
\textbf{Output:} An upper bound on $\#W_2(\Q)$ or an error returned that $D$ (a divisor computed in the algorithm) contains elliptic curves.

\begin{itemize}
\item[1.] Compute $\omega_1$ and $\omega_2$ a basis for $\Ann(p,J(\Q))$ from \cite{Balakrishnan}.
\item[2.] Reduce $\omega_1$ and $\omega_2$ modulo $p$ and denote the reductions as $w_1$ and $w_2$.
\item[3.] Compute $g(x_1,x_2)=\beta_{01}+\beta_{02}(x_1+x_2)+\beta_{12}x_1x_2$ as in Lemma \ref{lemma divisor}.
\item[4.] Check if there exist $a,b\in \F_{p}^{\rm alg}$ such that $(bx_1-a)(bx_2-a)=g(x_1,x_2)$.

\begin{itemize}
\item[a.] If $a,b$ exist, then Lemma \ref{Divisor D} gives the divisor $D$, and we compute the bound as in Proposition \ref{bound p case 1}.
\item[b.] If not, we continue with Step 5.
\end{itemize}
\item[5.] Check if $Z$ is reducible.
By Lemma \ref{Z reducible}, $Z$ is reducible if and only if every branched point of $\varphi:Z\to C$ is singular. 
\begin{itemize}
\item[a.] If $Z$ is reducible, we factorize it as in Lemma \ref{Z reducible}.
\begin{itemize}
\item[I.] If 
$\beta_{12}\neq 0$ and $f'(-\beta_{02}/\beta_{12})=a_7(\beta_{02}^2-\beta_{12}\beta_{01})^3$,
 we are in the case of Lemma \ref{D reducible}, where $D$ contains elliptic curves, and we cannot handle this case. Exit with an error.
\item[II.] Otherwise, we compute the bound following Proposition \ref{bound p case 2}.
\end{itemize}
\item[b.] If $Z$ is irreducible, we continue with Step 6. 
\end{itemize}
\item[6.] We compute the singular points in $Z$ and the branched points of $p:\widetilde{Z}\to C$. Then compute the upper bound on $\#W_2(\Q)$,  following Proposition \ref{bound D irreducible}.
\end{itemize}
\end{algorithm}


\section{Examples}
Here we present one sharp example in each of Cases I, II, and III, computed using Algorithm \ref{mainalg} and Siksek's Theorem \ref{residue 0J}. We note that Siksek's method does not appear to be directly applicable with the primes we have chosen, as there exist residue disks containing more than one point in $W_2(\Q)$. Furthermore, in each example, this method does not allow us to determine $W_2(\Q)$ using primes
less than or equal to $19$.

\begin{example}[Case I] Consider the hyperelliptic curve
\[
C\colon y^2=x^7-14x^6+49x^5+x^2-16x+64.
\]
Applying the algorithm from \cite{Balakrishnan}, we determine that the set of rational points of $C$ is
\[
C(\Q)=\{ \infty, (0 , 8), (0 , -8), (7 , 1), (7, -1)\}.
\]
We will show that $W_2(\Q)$ is the following set:
\begin{align*}
W_2(\Q)=
\{& 0_J, [(0 , 8)-(\infty)], [(0 , -8)-(\infty)], [(7 , 1)-(\infty)], [(7, -1)-(\infty)],\\
&[2(0 , 8)-2(\infty)], [2(0 , -8)-2(\infty)], [2(7 , 1)-2(\infty)], [2(7, -1)-2(\infty)],\\
&[(7 , 1)+(0,8)-2(\infty)], [(7, -1)+(0,8)-2(\infty)]
\}.    
\end{align*}

The set of $\F_7$-rational points of $C$ is
\[
    C(\F_7)=\{ \overline{\infty}, \overline{(0 , 1 )}, \overline{(0 , 6 )}, \overline{(1 , 1)}, \overline{(1, 6)}, \overline{(3 , 0)},
\overline{(5 , 0)} \}.
\]
We have 
$$
J(\Q)=\langle [(7,1)-(\infty)] \rangle.
$$
Thus, the reduction modulo $7$ of $W_2(\Q)$ must be contained in the following set:
\begin{align*}
S=\{\overline{0_J}, [\overline{(0,\pm1)}-\overline{(\infty)}], [2\overline{(0,\pm1)}-2\overline{(\infty)}]\}. 
\end{align*}
The annihilator of $J(\Q)$ under the integration pairing is
spanned by
\begin{align*}
\omega_1 &= (3\cdot7 + 4\cdot7^2 + 7^3 + O(7^4)) \frac{dx}{y} + (1+6\cdot7 +6\cdot 7^2 + 7^3 +O(7^4)) \frac{xdx}{y},\\
\omega_2 &= (2\cdot7^2 +4\cdot 7^3 + O(7^4)) \frac{dx}{y} + (1 + 6\cdot7 + 6\cdot7^2 + 7^3 + O(7^4))\frac{x^2dx}{y}.
\end{align*}
Let $w_i$ denote the reduction modulo $7$ of $\omega_i$. Then we have
\begin{align*}
\psi^{*}(w_1\wedge w_2)&=(x_1x_2+(x_1+x_2)+1)(x_2-x_1)\frac{dx_1\wedge dx_2}{y_1y_2}\\
&=(x_1x_2)(x_2-x_1)\frac{dx_1\wedge dx_2}{y_1y_2}.
\end{align*}
Consequently, we are in Case I. Following the notation of the proof of Proposition \ref{bound p case 1}, we let $P=(0,1)$. The support of $D(\F_7)$ is the union of the two curves $C_{\overline{(0,1)}}$ and $C_{\overline{(0,-1)}}$. By Theorem \ref{ThmOver} we have that $m(\overline{0_J})\leq 4$, $m([2\overline{(0,\pm1)}-2\overline{(\infty)}])\leq 2$ (using the differential $w_1=\frac{xdx}{y}$) and $m([\overline{(0,\pm1)}-\overline{(\infty)}])\leq 1$ (using the differential $w_2=\frac{x^2dx}{y}$).

Note that 
$$
\{[(7,\pm1)-(\infty)],[(0,\pm8)-(\infty)]\}\subset W_2(\Q_7)\cap\overline{J(\Q)}\cap U_{[\overline{(0,\pm1)}-\overline{(\infty)}]}. 
$$
Thus 
$$
2\leq\#W_2(\Q_7)\cap\overline{J(\Q)}\cap U_{[\overline{(0,\pm1)}-\overline{(\infty)}]}\leq 1+\frac{6}{5}m([\overline{(0,\pm1)}-\overline{(\infty)}])= 2.2.
$$
Similarly, we have that
$$
\{[(7,\pm1)+(0,\pm8)-2(\infty)],[2(7,\pm1)-2(\infty)],[2(0,\pm8)-2(\infty)]\}\subset W_2(\Q_7)\cap\overline{J(\Q)}\cap U_{[2\overline{(0,\pm1)}-2\overline{(\infty)}]}, 
$$
then 
$$
3\leq\#W_2(\Q_7)\cap\overline{J(\Q)}\cap U_{[\overline{(0,\pm1)}-\overline{(\infty)}]}\leq 1+\frac{6}{5}m([2\overline{(0,\pm1)}-2\overline{(\infty)}])= 3.4.
$$
Finally, observe that
$$
\{[(7,1)+(0,-8)-2(\infty)],[(7,-1)+(0,8)-2(\infty)],0_J\}\subset W_2(\Q_7)\cap\overline{J(\Q)}\cap U_{\overline{0_J}}. 
$$
Thus
$$
3\leq\#W_2(\Q_7)\cap\overline{J(\Q)}\cap U_{\overline{0_J}}\leq 1+\frac{6}{5}m(\overline{0_J})= 5.8.
$$
We claim that $\#W_2(\Q_7)\cap\overline{J(\Q)}\cap U_{\overline{0_J}}=3$. For the sake of contradiction, assume that there exist $P_1,P_2\in C(\Q_7)\setminus C(\Q)$ such that $P_1\neq \iota(P_2)$, the reduction of $[(P_1)+(P_2)-2(\infty)]\in W_2(\Q_7)$ is $\overline{0_J}$, and 
\[
\int_{\infty}^{P_1}\omega+\int_{\infty}^{P_2}\omega=0,
\]
for every nonzero $\omega\in \Ann(7,J(\Q))$. By Theorem \ref{residue 0J}, the reductions of $P_1$ and $P_2$ should be $\overline{(0,1)}$ and $\overline{(0,-1)}$, respectively. Notice that for every $\omega\in \Ann(7,J(\Q))$ we have 
\[
\int_{(0,8)}^{P_1}\omega=-\int_{(0,-8)}^{P_2}\omega=\int_{(0,8)}^{\iota(P_2)}\omega\neq 0.
\]
In this setup, we want to prove that for $[(P)-(\infty)]$ in
$$
U_{[\overline{(0,1)}-\overline{(\infty)}]}\setminus\{[(0,8)-(\infty)],[(7,1)-(\infty)]\},
$$ 
the intersection of the kernels of $\lambda_\alpha-\lambda_\alpha(P)$ and $\lambda_\beta-\lambda_\beta(P)$ is just $P$. In this residue disk, we obtain the two power series
\begin{align*}
h(t) &= (5\cdot7 + O(7^3))t + (2 + 4\cdot7 + 4\cdot7^2 + O(7^3))t^2 +  (1 + 4\cdot7 + 7^2 + O(7^3))t^3 + \cdots \\
g(t) &= (7^2 +  O(7^3))t + (4\cdot7^2 + O(7^3))t^2 + (6 + 5\cdot7 + 4\cdot7^2 + O(7^3))t^3 +  \cdots 
\end{align*}
associated to the differentials $\omega_1$ and $\omega_2$, respectively. We aim to show that for each $t_0\in 7\Z_7\setminus\{0,7\}$, the functions $h_{t_0}(t)=h(t)-h(t_0)$ and $g_{t_0}(t)=g(t)-g(t_0)$ have just one common zero, specifically $t_0$. Looking at the Newton polygon of $h_{t_0}$, we notice that it has at most two roots in $7\Z_7$. We prove this by dividing this into three cases:
\begin{itemize}
    \item[(a)] Suppose that $\ord_7(t_0)=m\ge 2$, which implies $\ord_7(h(t_0))\geq m+1$ and $\ord_7(g(t_0))\geq m+2$. We have that $t_0=a_07^m+O(7^{m+1})$ with $a_0\in \{1,\dots,6\}$, and we have that $h(t)\equiv h_{t_0}(t) \text{ (mod }7^{m+1})$ and $g(t)\equiv g_{t_0}(t) \text{ (mod }7^{m+2})$. Using the first congruence, we notice that the other root of $h_{t_0}$ must be of the form $t_1\coloneqq 7+b_07^m+O(7^{m+1})$ with $b_0\in \{1,\dots,6\}$ and $a_0+b_0=7$. Using the second congruence, we notice that such a $t_1$ cannot be a root of $g_{t_0}(t)$.
    \item[(b)] Suppose that $\ord_7(t_0)=1$ and $t_0=a_07+O(7^2)$ with $a_0\in\{1,2,3,4,6\}$. If $a_0=1$, we have that the other solution $t_1$ of $h_{t_0}(t)$ has to satisfy that $\ord_7(t_1)\ge 2$, then we could apply item (a) to $t_1$ to obtain that $g(t_1)\neq g(t_0)$. Suppose that $a_0\neq 1$. By looking at the coefficients of $h$ and $g$, we notice that there is an $s_0\in  7\Z_7\setminus\{0,7\}$ such that 
    \begin{equation}\label{order7}
    3=\ord_7(g(s_0)-g(t_0))\leq \ord_7(h(s_0)-h(t_0)). 
    \end{equation}
    Let us consider the power series
    \[
    H(t)=g_{t_0}(s_0)h_{t_0}(t)-h_{t_0}(s_0)g_{t_0}(t).
    \]
    By \eqref{order7} and looking at the Newton polygon of $H$, it has at most $2$ solutions in $7\Z_7$, which are precisely $t_0$ and $s_0$. Then the only common solution of $h_{t_0}$ and $g_{t_0}$ is $t_0$.
    \item[(c)] Suppose that $\ord_7(t_0)=1$ and $t_0=5\cdot7+O(7^2)$. Since $h(t)\equiv h_{t_0}(t) (\text{mod }7^{2})$ we have that the other solution of $h_{t_0}$ must be of the form $t_1\coloneqq 3\cdot7+O(7^2)$. Then we can apply item (b) to $t_1$ to obtain that $g(t_1)\neq g(t_0)$.
\end{itemize}
\end{example}

\begin{example}[Case II] Let $C$ be the hyperelliptic curve defined by the equation
\[
y^2=f(x) = (x^3 - 2x^2 - 3x - 5)(x^4 - 5x^3 + 2x^2 - x - 2).
\]
Applying the algorithm developed in \cite{Balakrishnan}, we know that 
$C(\Q)=\{ \infty\}$.
The $\F_5$-points of $C$ are
\[
C(\F_5)=\{ \overline{\infty}, \overline{(0 , 0)}, \overline{(1, 0)}, \overline{(2, 0)},\overline{(3, 0)}, \overline{(4, 0)}\}.
\]
We claim that $W_2(\Q)= \{0_J\}\cup \{P, \iota(P)\}$, where
\begin{align*}
P=\left[\left(\sqrt{-1},5\right)+\left(-\sqrt{-1},5\right)-2(\infty)\right].
\end{align*}
In this case, we have 
\[
J(\Q)=\langle P,Q \rangle,
\]
where $Q$ is the $2$-torsion point associated to the cubic polynomial $x^3 - 2x^2 - 3x - 5$. 
Note that the reduction of $P$ modulo $5$ is $[\overline{(2, 0)}+\overline{(3, 0)}-2(\overline{\infty})]$ and the reduction of $Q$ is $[\overline{(0, 0)}+\overline{(1, 0)}+\overline{(2, 0)}-3(\overline{\infty})]$. 
Consequently, the reduction modulo $5$ of $W_2(\Q)$ is the following set:
\begin{align*}
\red_5(J(\Q))\cap W_2(\F_5)= \{\overline{0_J},[\overline{(2, 0)}+\overline{(3, 0)}-2(\overline{\infty})]\},
\end{align*}
where $\red_5$ denotes the reduction map $\red_5\colon J(\Q_5)\to J(\F_5)$.

The annihilator of $J(\Q)$ under the integration pairing is
spanned by
\begin{align*}
\omega_1 &= (2 + 4\cdot5 + 3\cdot5^3 + O(5^4)) \frac{dx}{y} + (3 + 2\cdot5 + 2\cdot5^2 + 5^3 + O(5^4)) \frac{xdx}{y},\\
\omega_2 &= (3 + 3\cdot5 + 3\cdot5^2 + 5^3 + O(5^4)) \frac{dx}{y} + (3 + 3\cdot5 + 5^2 + 2\cdot5^3 + O(5^4))\frac{x^2dx}{y}.
\end{align*}
Let $w_i$ denote the reduction modulo $5$ of $\omega_i$. Then we have
\begin{align*}
\psi^{*}(w_1\wedge w_2)= &(-x_1x_2+(x_1+x_2)+1)(x_2-x_1)\frac{dx_1\wedge dx_2}{y_1y_2}.
\end{align*}
In this case, $\beta_{12}=-1$ and $\beta_{02}=\beta_{01}=1$, and we also have that 
\[
f\left(-\frac{x+1}{1-x}\right)(1-x)^8\equiv 4x^7 + 2x^6 + x^5 + x^3 + 3x^2 + 4x\equiv -f(x) \text{ (mod }5).
\]
Additionally, note that
\[
f'(-\beta_{02}/\beta_{12})\equiv f'(1)\equiv2\not\equiv 2^3\equiv a_7(\beta_{02}^2-\beta_{12}\beta_{01})^3 \text{ (mod }5).
\]
Consequently, we are in Case II. Since $\gamma$ in \eqref{eq polynomials} is $-1$ we have that
\begin{equation*}
Z=\mathbb{V}_Z(y_1-2y_2(\beta_{12}x_1+\beta_{02})^{4})\cup  \mathbb{V}_Z(y_1-3y_2(\beta_{12}x_1+\beta_{02})^{4}), 
\end{equation*}
where both components have the same image via the morphism $\pi$ of the diagram \eqref{diagram W2}. Let $Z''$ be the curve in $W_2$ via the morphism $\psi$ and by Proposition \ref{bound p case 2} we have that $D=2\cdot Z''$. The proof of Proposition \ref{bound p case 2} shows that $m(z)\leq 2$ if $z\in Z''$ and $m(z)= 0$ otherwise.

Applying Theorem \ref{residue 0J}, one can show that the only element in $W_2(\Q)$ reducing to $\overline{0_J}$ is precisely $0_J$.

On the other hand, note that $z=[\overline{(2, 0)}+\overline{(3, 0)}-2(\overline{\infty})]$ is in $ Z''$.
Then by Observation \ref{sharpness m}, we have that 
\[
\#(W_2(\Q_5)\cap \overline{J(\Q)}\cap U_z)\leq \left\lfloor1+\frac{4}{3}\cdot 2\right\rfloor=3.
\]
Indeed, this is an equality since 
\[
W_2(\Q_5)\cap \overline{J(\Q)}\cap U_z\supset\{P, \iota(P),[(Q_1)+(Q_2)-2(\infty)]\},
\]
where $Q_1$ and $Q_2$ are the two torsion points in $C(\Q_5)$ reducing to $\overline{(2, 0)}$ and $\overline{(3, 0)}$, respectively. Consequently, the only points in $W_2(\Q)$ that reduce to $z$ are $P$ and $\iota(P)$.
\end{example}

\begin{example}
[Case III] Let $C$ be the hyperelliptic curve defined by the equation
\[
y^2=x(x^2+4) (x^2-4x-3) (x^2+4x+2).
\]
Applying the algorithm developed in \cite{Balakrishnan}, we know that
$C(\Q)=\{ \infty, (0 , 0)\}$.
We have that $C(\F_5)$ is the following set:
\[
C(\F_5)=\{ \overline{\infty}, \overline{(0 , 0)}, \overline{(1 , 0)}, \overline{(4 , 0)}\}.
\]
We claim that $W_2(\Q)= \{0_J\}\cup \{P_i: i=1,\dots,6\}$, where
\begin{align*}
P_1 &=\left[(0,0)-(\infty)\right],\\
P_2 &= \left[(Q)+(Q^\sigma)-2(\infty)\right],\\
P_3&=\left[\left(2\sqrt{-1},0\right)+\left(-2\sqrt{-1},0\right)-2(\infty)\right],\\
P_4&=\left[\left(-2+2\sqrt{2},0\right)+\left(-2-2\sqrt{2},0\right)-2(\infty)\right],\\
P_5&=\left[\left(2+\sqrt{7},0\right)+\left(2-\sqrt{7},0\right)-2(\infty)\right],\\
P_6 &=\iota P_2,
\end{align*}
with $Q=\left(\frac{-13+2\sqrt{-14}}{9},\frac{12560-7045\sqrt{-14}}{2187}\right)$ and $Q^\sigma$ the Galois conjugate of $Q$. 

In this case, we have 
\[
J(\Q)=\langle P_2,P_3,P_4,P_5 \rangle,
\]
with $P_2$ the generator of the free part of $J(\Q)$.
Note that the reduction modulo $5$ of $W_2(\Q)$ must be contained in  the intersection of the reduction modulo $5$ of $J(\Q)$ with $W_2(\F_5)$, which is the following set:
\begin{align*}
S\coloneqq \{&\overline{0_J},[\overline{(0,0)}-\overline{(\infty)}],[\overline{(1,0)}-\overline{(\infty)}],[\overline{(4,0)}-\overline{(\infty)}], [\overline{(0,0)}+\overline{(1,0)}-2\overline{(\infty)}],\\
&[\overline{(0,0)}+\overline{(4,0)}-2\overline{(\infty)}],[\overline{(1,0)}+\overline{(4,0)}-2\overline{(\infty)}], [\overline{(z,0)}+\overline{(4z+1,0)}-2\overline{(\infty)}],\\
&[\overline{(z+4,0)}+\overline{(4z,0)}-2\overline{(\infty)}]\}, 
\end{align*}
where $z$ satisfies $z^2 + 4z + 2 = 0$, cutting out the degree 2 extension $\F_{25}/\F_{5}$.

The annihilator of $J(\Q)$ under the integration pairing is
spanned by
\begin{align*}
\omega_1 &= (4 + 5 + 4\cdot5^3 + O(5^4)) \frac{dx}{y} + (3 + 2\cdot5 + O(5^4)) \frac{xdx}{y},\\
\omega_2 &= (4 + 4\cdot5 + 2\cdot5^2 + 2\cdot5^3 +O(5^4)) \frac{dx}{y} + (3 + 2\cdot5 + O(5^4))\frac{x^2dx}{y}.
\end{align*}
Let $w_i$ denote the reduction modulo $5$ of $\omega_i$. Then we have
$$
\psi^{*}(w_1\wedge w_2) = (3x_1x_2+2(x_1+x_2)+4)(x_2-x_1)\frac{dx_1\wedge dx_2}{y_1y_2}.$$

The divisor $D$ is in Case III, i.e., the divisor $D\in \Div(W_2)$, associated with the wedge product of $w_1$ and $w_2$, is irreducible. In this case, the support of $D(\F_5)\subset W_2(\F_5)$ is 
\[
\{\overline{0_J},[ \overline{(0,0)}+\overline{(1,0)}-2\overline{(\infty)}] ,[\overline{(2z+1, 2)},\overline{(3z+3,2)}-2\overline{(\infty)}],[\overline{(2z+1, 3)}+\overline{(3z+3,3)}-2\overline{(\infty)}]\}.
\]
By \eqref{prop914}, there is at most one point of $(W_2(\Q_5)\cap \overline{J(\Q)}$ reducing to $z$, whenever $z$ is not in the support of $D(\F_5)$. Note that $[(2\sqrt{-1},0)-(\infty)]$ and $[(-2\sqrt{-1},0)-(\infty)]$ belong to $\overline{J(\Q)}\cap C(\Q_5)$. 

Therefore, for each point $z$ in $S\setminus \{\overline{0_J},[ \overline{(0,0)}+\overline{(1,0)}-2\overline{(\infty)}]\}$, we have that 
$$
\#(W_2(\Q_5)\cap \overline{J(\Q)}\cap U_z)=1.
$$
Furthermore, applying Theorem \ref{residue 0J}, one can prove that the only element in $W_2(\Q)$ reducing to $\overline{0_J}$ is precisely $0_J$. In conclusion, the only points in $W(\Q)$ reducing to $S\setminus \{[ \overline{(0,0)}+\overline{(1,0)}-2\overline{(\infty)}]\}$ are $0_J$, $P_1$, $P_3$, $P_4$ and $P_5$.

Finally, for $z=[ \overline{(0,0)}+\overline{(1,0)}-2\overline{(\infty)}]\in W_2(\F_5)$, note that this point is a singular point of $D$. The polynomial $G$ defined in \eqref{def G} is
$$
G(x)= 2x^4 + 4x^3 + x^2 + 4x + 2.
$$
Since $0$ and $1$ are not roots of $G$, we have that $m(z)\leq 2$.  Then by Observation \ref{sharpness m}, we have that 
\[
\#(W_2(\Q_5)\cap \overline{J(\Q)}\cap U_z)\leq \left\lfloor1+\frac{4}{3}\cdot 2\right\rfloor=3.
\]
Indeed, this is an equality since 
\[
W_2(\Q_5)\cap \overline{J(\Q)}\cap U_z\supset\{P_2,P_6,[(2\sqrt{-1},0)+(0,0)-2(\infty)]\}.
\]
Consequently, the only points in $W_2(\Q)$ that reduce to $z$ are $P_2$ and $P_6$.
\end{example}


\bibliography{ChabautySurfacesBiblio}
\bibliographystyle{amsalpha}
\end{document}